\newtheorem{assumption}{Assumption}
\newtheorem{prop}{Proposition}
\newtheorem{cor}{Corollary}
\newtheorem{define}{Definition}
\newtheorem{rem}{Remark}
\newtheorem{theorem}{Theorem}
\newcommand{\RR}{\mathbb{R}}
\newcommand{\EE}{\mathbb{E}}
\newcommand{\lyxaddress}[1]{
\par {\raggedright #1
\vspace{1.4em}
\noindent\par}
}
\providecommand{\keywords}[1]{\textbf{\textit{Keywords}} #1}
\begin{document}

\title{Asymptotic Analysis of Model Selection Criteria for General
Hidden Markov Models}

\author{Shouto Yonekura$^{1,3}$, Alexandros Beskos$^{1,3}$, Sumeetpal S.Singh$^{2,3}$}
\maketitle

\lyxaddress{1 Department of Statistical Science, University College London, UK.\\
2 Department of Engineering, University of Cambridge, UK.\\
3 The Alan Turing Institute for Data Science, UK.}

\begin{abstract}
The paper obtains analytical results for the asymptotic properties 
of Model Selection Criteria -- widely used in practice -- for a general family
of hidden Markov models (HMMs), thereby substantially extending the related theory 
 beyond typical `i.i.d.-like' model structures and filling in an important gap 
 in the relevant literature.
In particular, we look at the Bayesian and Akaike Information Criteria (BIC and AIC) 
and the model evidence. In the setting of nested classes of models, we prove that BIC and the evidence are strongly consistent for HMMs (under regularity conditions), 
whereas AIC is not weakly consistent.
Numerical experiments support our theoretical
results. 
\end{abstract}

\keywords{Hidden Markov models, Akaike information criteria, Bayesian information criteria, Model evidence.}


\section{Introduction}

Owning to their rich structure,   hidden Markov models (HMMs) are
routinely used in such diverse disciplines as finance
\citep{mamon2007hidden}, speech recognition~\citep{gales2008application},
epidemiology \citep{green2002hidden}, biology \citep{yoon2009hidden},
signal processing \citep{crouse1998wavelet}. We refer to \cite{del:04,del:13}
for a comprehensive study of the theory of HMMs and of accompanying Monte Carlo methods for their calibration to observations. Model Selection has been
one of the most well studied topics in Statistics. BIC \citep{schw:78} or AIC \citep{akai:74}  -- as well as their generalisations \citep{konishi1996generalised} -- and the evidence, are used in a wide range of contexts, including time series analysis \citep{shibata1976selection},
regression \citep{hurvich1989regression}, bias correction \citep{hurvich1990impact},
composite likelihoods \citep{varin2005note}.
For a comprehensive treatment of the subject of Model Selection, 
see e.g.~\cite{clae:08}. 

There has been relatively limited research on Model Selection for general classes of  HMMs used in practice. A fundamental property of a Model Selection Criterion
 is that of \emph{consistency} (to be defined analytically later on in the paper).
\cite{csiszar2000consistency}
prove strong consistency of BIC for discrete-time, finite-alphabet
Markov chains. In the HMM context, \cite{gass:03} consider discrete-time, finite-alphabet
HMMs and provide asymptotic and finite-sample analysis  of code-based and penalised maximum likelihood estimators (MLEs) using
tools from Information Theory and Stein's Lemma.
With regards to the  Bayesian approach to Model Selection, this
typically involves the marginal likelihood of the data (or evidence) \citep{jeffreys1998theory, kass1995bayes}. 
\cite{shao2017bayesian} show numerically that the evidence can be consistent for HMMs, 
however this has yet to be proven analytically.

The work in this paper makes a number of contributions, relevant for HMMs on general state spaces -- thus 
of wide practical significance and such that cover an important gap in the theory of HMMs
established in the existing literature. 
We remark that our analysis assumes smoothness conditions of involved functions w.r.t.~the parameter of interest, thus is intrinsically not relevant for interesting problems of discrete nature, an example being the identification of the number of states of the underlying Markov chain.
Our main results can be summarised as follows:
\begin{itemize}
\item[(i)] We establish sharp
asymptotic results (in the sense of obtaining $\limsup_n$ for the quantity of interest) for the log-likelihood function for HMMs evaluated at the MLE, in an a.s.~sense. 
A lot of the initial developments are borrowed from \cite{douc:14} (see also citations therein for 
more works on asymptotic properties of the MLE for HMMs). 
Moving from the study of the MLE to that of Model Selection Criteria is non-trivial, 
involving for instance use of the 
Law of Iterated Logarithm (LIL) for -- carefully developed -- martingales \citep{stou:70}.
\item[(ii)] We show that  BIC and the evidence are 
strongly consistent in the context of nested HMMs, whereas
AIC is not consistent. To the best
of our knowledge, this is the first time that such statements are proven in 
the literature for general HMMs. For AIC, we show that, w.p.~1, this criterion 
will occasionally choose the wrong model even under an infinite amount of information.
\end{itemize}

The rest of the paper is organised as follows. In Section  \ref{sec:asy}, we briefly
review some asymptotic results for the log-likelihood function and the  MLE without assuming model correctness. An important
departure from the i.i.d.~setting is that the log-likelihood function itself does not make up a stationary time-series
process even if the data are assumed to be derived from one. 
Section \ref{sec:asyL} begins with some asymptotic results for the MLE and the log-likelihood under model correctness. Later on, we move beyond the established literature and, by calling upon LIL for martingales,
we establish a number of fundamental asymptotic results, relevant for Model Selection Criteria. In Section \ref{sec:back}, we study the derivation of 
BIC (and its connection with the evidence) and AIC for general HMMs. 
In particular, an explicit result binding BIC and evidence will later on be used to show 
that the two criteria share similar consistency properties. 
Section \ref{sec:hmm} contains our main results.  We prove strong consistency
of BIC and the evidence and non-consistency of AIC for a class of nested HMMs.
Section ~\ref{sec:algorithm} reviews (for completeness) an algorithm borrowed from the literature, based on
Sequential Monte Carlo, for approximating AIC and BIC. 
We use this algorithm in Section \ref{sec:numerics} to present some numerical results that 
agree with our theory. We conclude in Section  \ref{sec:final}.

\section{Asymptotics under No-Model-Correctness}
\label{sec:asy}

We briefly summarise some asymptotic results for general HMMs needed in later
sections. The development follows closely \cite[Ch.~13]{douc:14}.
An HMM is a bivariate process $\{x_{k},z_{k}\}_{k\geq0}$
such that state component $\{x_{k}\}_{k\geq0}$ is an unobservable
Markov chain with initial law $x_0\sim \eta$ and transition kernel $Q_{\theta}(\cdot|x)$, with values 
in the measurable space $(\mathsf{X},\mathcal{X})$. 
We have adopted a parametric setting with $\theta\in\Theta\subseteq \mathbb{R}^{d}$,
for some $d\ge 1$.
Conditionally on $\{x_{k}\}_{k\geq 0},$
the distribution of the observation process instance $z_{k}$ depends only on $x_{k}=x$, independently over $k\ge 0$, and is given by the kernel
 $G_{\theta}(\cdot|x)$ defined on $(\mathsf{Y},\mathcal{Y})$. 
We assume that $\mathsf{X}$, $\mathsf{Y}$ are Polish spaces and $\mathcal{X}$, $\mathcal{Y}$ the corresponding Borel $\sigma$-algebras.
The notation $\{y_k\}_{k\ge 0}$ is reserved for the true \emph{data generating process}, 
 which may or may not belong in the parametric family of HMMs we specified above -- 
 meant to be distinguished from $\{z_{k}\}_{k\ge 0}$ which is the process driven by the model dynamics.
In particular, in this section we work under no-model correctness, i.e.~we do not have to assume 
the existence of a correct parameter value for the prescribed model that delivers the distribution 
of the data generating process. 
 
 Throughout the article, we assume that the
 following hold.

\begin{assumption}
	\label{ass:1}
The data generating process $\{y_{k}\}_{k\geq0}$ is
strongly stationary and ergodic. 
\end{assumption}

\begin{assumption}
	\label{ass:2}
\begin{itemize}
\item[(i)] $Q_{\theta}(\cdot|x)$
is absolutely continuous w.r.t.~a probability measure $\mu$
on $(\mathsf{X},\,\mathcal{X})$ with density $q_{\theta}(x'|x)$ --
$\mu$ is fixed 
for all $(x,\theta) \in\mathsf{X}\times \Theta$.

\item[(ii)] $G_{\theta}(\cdot|x)$
is absolutely continuous w.r.t.~a  measure $\nu$
on $(\mathsf{Y},\,\mathcal{Y})$ with density $g_{\theta}(y|x)$ --
$\nu$ is fixed  
for all $(x,\theta) \in\mathsf{X}\times \Theta$.

\item[(iii)] The initial distribution $\eta=\eta(dx_0)$ has density, denoted $\eta(x_0)$ -- with some abuse of notation --,
w.r.t.~$\mu$.


\item[(iv)] The parameter space $\Theta$ is a compact subset of $\mathbb{R}^{d}$; w.p.~1, $p_{\theta}(y_{0:n-1})>0$ for all $\theta\in\Theta$, 
 for all $n\ge 1$, where $p_{\theta}(\cdot)$ denotes here the density of the distribution of the observations under the model (for given $\theta$ and size $n$). 
\end{itemize}
\end{assumption}

\noindent 
Without loss of generality, we have assumed that $\eta(dx_0)$ does not depend on $\theta$.
Probability statements -- as in Assumption \ref{ass:2}(iv) -- and expectations throughout the paper are
to be understood w.r.t.~the law of the data generating process $\{y_k\}$.
Henceforth we make use of the notation $a_{i:j}=(a_i,\ldots,a_{j})$, for integers $i\le j$,
for a given sequence $\{a_k\}$.
We need the following conditions.

\begin{assumption}
	\label{ass:3}
There exist $\sigma^-$, $\sigma^+\in(0,\infty)$ so that  $$\sigma^{-}\leq q_{\theta}(x'|x)\leq\sigma^{+}$$
for any $x,\,x{'}\in\mathsf{X}$ and any $\theta\in\Theta$.
\end{assumption}

\noindent This is the strong mixing condition typically used in this context (e.g.~\cite{del:04, del:13}), 
providing a Dobrushin coefficient of $1-	\sigma_-/\sigma_+$ for the hidden Markov chain; it is critical for most of the results reviewed or developed in the sequel.
Assumption \ref{ass:3}  implies, for instance,  that for any $x\in\mathsf{X}$, $A\in\mathcal{X}$, $Q_{\theta}(A|x)\geq\sigma^{-}\mu(A)$,
that is, for any $\theta\in\Theta$, $\mathsf{X}$ is a
1-small set for the process $\{x_{k}\}_{k\ge 0}$. The chain has the
unique invariant measure $\pi_{\theta}^{X}$ and is uniformly ergodic,
so  for any $x\in\mathsf{X}$,  $n\geq0$,
$\left\Vert Q_{\theta}^{n}(\cdot|x)-\pi_{\theta}^{X}\right\Vert _{TV}\leq(1-\sigma^{-}/\sigma^+)^{n}$ --
with $\left\Vert\cdot \right\Vert _{TV}$ denoting total variation norm.


We calculate the likelihood and log-likelihood functions,
\begin{align}
p_{\theta}(y_{0:n-1}) & =\int
\eta(dx_0) g_{\theta}(y_{0}|x_0)\prod_{k=1}^{n-1}\big\{ q_{\theta}(x_{k}|x_{k-1})
 g_{\theta}(y_{k}| x_{k})\big\}\mu^{\otimes n}(dx_{1:n-1});
 \label{eq:calc}\\
\ell_{\theta}(y_{0:n-1}) & =\log p_{\theta}(y_{0:n-1})=\sum_{k=0}^{n-1}\log p_{\theta}(y_{k}| y_{0:k-1}).\label{eq:loglik}
\end{align}
Though $\{y_{k}\}_{k\geq0}$ is stationary and ergodic, terms $\{\log p_{\theta}(y_{k}| y_{0:k-1})\}_{k\ge 0}$
do not form a stationary process (in general). To obtain  stationary and ergodic log-likelihood terms, 
following  \cite{douc:04, capp:05, douc:14}, 
we work with the standard extension 
of the stationary $y$-process onto the whole of the integers,
and  write  
$\{y_{k}\}_{k=-\infty}^{\infty}$. 
One can then define the variable $\log p_{\theta}(y_{k}| y_{-\infty:k-1})$ as the a.s.~limit  of the Cauchy sequence (uniformly in $\theta$) 
$\log p_{\theta}(y_{k}| y_{-t:k-1})$ -- found as in (\ref{eq:calc}) for initial law $x_{-t}\sim \eta$ -- as $t\rightarrow \infty$; see \cite[Ch.~13]{douc:14} for more details.
%
We can now define the modified, stationary version of the  log-likelihood
\begin{align}
\ell_{\theta}^{s} & (y_{-\infty:n-1}):=\sum_{k=0}^{n-1}\log p_{\theta}(y_{k}| y_{-\infty:k-1}).\label{eq:extend}
\end{align}
%
%

\begin{assumption}
\label{ass:4}
We have that
%
$b^{+}:=\sup_{\theta\in\Theta}\sup_{x\in\mathcal{X},y\in\mathcal{Y}}g_{\theta}(y|x)<\infty$ 
and  $$\mathbb{E}\,|\log b^{-}(y_{0})|<\infty,$$
where $b^{-}(y):=\inf_{\theta\in\Theta}\int_{\mathsf{X}}g_{\theta}(y|x)\mu(dx)>0$.
\end{assumption}
\noindent The finite-moment part     implies 
that $\mathbb{E}\,|\log p_{\theta}(y_{0}| y_{-\infty:-1})|<\infty$, thus 
  Birkhoff's ergodic theorem can be applied for averages deduced from (\ref{eq:extend}).
\begin{prop}
\label{lem:station}
Under Assumptions \ref{ass:1}-\ref{ass:4},
\begin{align*}
\sup_{\theta\in\Theta}|\tfrac{1}{n}\ell_{\theta}(y_{0:n-1})-\tfrac{1}{n}\ell_{\theta}^{s}(y_{-\infty:n-1})| \le \tfrac{C}{n},
\end{align*}
for a constant $C>0$.
\end{prop}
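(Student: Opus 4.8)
The goal is to bound the difference between the finite-sample log-likelihood $\ell_\theta(y_{0:n-1})$ and its stationary version $\ell_\theta^s(y_{-\infty:n-1})$, uniformly in $\theta$, by a term of order $1/n$. Since both quantities decompose as sums over $k=0,\dots,n-1$ of conditional log-densities, the natural first step is to write the difference term-by-term:
\[
\ell_\theta(y_{0:n-1})-\ell_\theta^s(y_{-\infty:n-1})
=\sum_{k=0}^{n-1}\Big\{\log p_\theta(y_k\mid y_{0:k-1})-\log p_\theta(y_k\mid y_{-\infty:k-1})\Big\}.
\]
The entire problem then reduces to controlling each summand and showing that the resulting bound, after summation and division by $n$, collapses to a constant. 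The key is that the summands decay geometrically in $k$: the $k$-th term compares a predictor conditioned on a finite past of length $k$ against one conditioned on an infinite past, and the strong mixing condition (Assumption \ref{ass:3}) forces the influence of the remote past to die off at a geometric rate governed by the Dobrushin coefficient $1-\sigma^-/\sigma^+$.

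First I would establish, uniformly in $\theta$ and in the data, a bound of the form
\[
\bigl|\log p_\theta(y_k\mid y_{0:k-1})-\log p_\theta(y_k\mid y_{-\infty:k-1})\bigr|\le C_0\,\rho^{k},
\]
where $\rho:=1-\sigma^-/\sigma^+\in(0,1)$ and $C_0>0$ is a deterministic constant. This is precisely the type of forgetting estimate developed in \cite[Ch.~13]{douc:14}: under Assumption \ref{ass:3} the filter is uniformly geometrically ergodic, so two filters started from different (conditional) initial laws — one from the empirical initialization at time $0$, the other from the stationary infinite past — contract towards each other at rate $\rho$. Assumption \ref{ass:4}, which guarantees $b^+<\infty$ and $b^-(y)>0$, is what lets me pass from the total-variation contraction of the filters to a bound on the difference of the \emph{logarithms} of the one-step predictive densities, keeping everything finite and uniform.

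Once the per-term geometric bound is in hand, the conclusion is immediate by summing a geometric series:
\[
\Bigl|\ell_\theta(y_{0:n-1})-\ell_\theta^s(y_{-\infty:n-1})\Bigr|
\le\sum_{k=0}^{n-1}C_0\,\rho^{k}
\le\frac{C_0}{1-\rho}=:C,
\]
and since this constant bound holds uniformly in $\theta$, dividing through by $n$ yields the stated $C/n$ rate. The supremum over $\theta\in\Theta$ poses no difficulty because the constants $C_0$ and $\rho$ arising from Assumptions \ref{ass:3}--\ref{ass:4} are uniform in $\theta$ by construction.

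**The main obstacle.** The delicate step is not the summation but the per-term forgetting bound itself — specifically, translating the total-variation contraction of the predictive filters into a uniform bound on the difference of log-densities. The logarithm is the source of the difficulty: a contraction in total variation of the filtering distributions does not automatically transfer to a bound on $|\log p_\theta(y_k\mid\cdot) - \log p_\theta(y_k\mid\cdot)|$ without lower control on the densities involved, which is exactly why Assumption \ref{ass:4} (the positivity of $b^-$ and finiteness of $b^+$) is needed to bound the log-ratio. I would lean on the mixing machinery of \cite{douc:14} to supply the filter-contraction estimate and then combine it carefully with the bounds on $g_\theta$ to secure the geometric decay in $k$ with constants independent of $\theta$ and of the observed sequence.
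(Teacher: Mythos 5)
Your proposal is correct and follows essentially the same route as the paper, whose proof simply defers to Proposition 13.5 of \cite{douc:14}: the argument behind that proposition is precisely your term-by-term decomposition, a deterministic geometric forgetting bound (uniform in $\theta$ and in the observations, with rate $\rho=1-\sigma^-/\sigma^+$ coming from Assumption \ref{ass:3}) on each predictive log-density difference, and summation of the geometric series to obtain a constant, hence the $C/n$ rate.
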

\begin{proof}
This is Proposition 13.5 of \cite{douc:14}; the upper bound $C/n$ is implied from the proof of
that proposition.	
\end{proof}	
%
%
We consider the maximum likelihood estimator (MLE) defined as 
the set 
\begin{align}
\hat{\theta}_n = \arg\max_{\theta\in\Theta}\ell_{\theta}(y_{0:n-1}).\label{eq:MLE}
\end{align}
%

\begin{assumption}
\label{ass:5}

For all $(x,x{'})\in\mathsf{X}\times\mathsf{X}$ and $y\in\mathsf{Y}$,
the mappings $\theta\mapsto q_{\theta}(x{'}|x)$ and $\theta\mapsto g_{\theta}(y|x)$
are continuous.
\end{assumption}
\noindent Such requirements imply continuity of the log-likelihood   $\theta\mapsto \tfrac{1}{n}\ell_{\theta}(y_{0:n-1})$ and its limit $\theta \mapsto\mathbb{E}\,[\,\log p_{\theta}(y_{0}| y_{-\infty:-1})\,]$,
which -- together with other conditions -- provide convergence of the MLE to the maximiser of the limiting function.
For sets $A, B\subseteq \Theta$, we define $d(A,B):= 
\inf_{a\in A, b\in B}|a-b|$. 


\begin{prop}
	\label{prop:uniform}
	Under Assumptions \ref{ass:1}-\ref{ass:5} we have the following.
\begin{itemize}
\item[(i)] Let $\ell(\theta):=\mathbb{E}\,[\,\log p_{\theta}(y_0|y_{-\infty:-1})\,]$.
The function $\theta\mapsto \ell(\theta)$ is continuous, and we have  
\begin{equation*}
\lim_{n\rightarrow\infty}\sup_{\theta\in\Theta}
|\tfrac{1}{n}\ell_{\theta}(y_{0:n-1})-\ell(\theta)|	=0,\quad  w.p.\,\,1.
\end{equation*}
\item[(ii)] We have $\lim_{n\rightarrow\infty}d(\hat{\theta}_{n},\,\theta_{*})=0$, w.p.\,\,1,
where 
$$\theta_{*}:=\arg\max_{\theta\in\Theta}\ell(\theta)$$ 
is the set of global maxima of $\ell(\theta)$. 
\end{itemize}
\end{prop}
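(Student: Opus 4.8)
The plan is to prove (i) first via a uniform law of large numbers, and then to obtain (ii) as a standard consequence of uniform convergence together with an argmax argument. For (i), the first step is to replace $\ell_\theta$ by its stationary surrogate $\ell^s_\theta$. Proposition~\ref{lem:station} gives $\sup_\theta|\tfrac1n\ell_\theta(y_{0:n-1})-\tfrac1n\ell^s_\theta(y_{-\infty:n-1})|\le C/n\to 0$, so it suffices to establish uniform convergence of $\tfrac1n\ell^s_\theta(y_{-\infty:n-1})=\tfrac1n\sum_{k=0}^{n-1}\log p_\theta(y_k|y_{-\infty:k-1})$ to $\ell(\theta)$. For each fixed $\theta$ the summands form a stationary, ergodic sequence, being a fixed measurable functional applied to shifts of the two-sided ergodic process $\{y_k\}$, and they are integrable by the remark following Assumption~\ref{ass:4}; hence Birkhoff's ergodic theorem yields $\tfrac1n\ell^s_\theta\to\ell(\theta)$ a.s., pointwise in $\theta$.

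The crux is upgrading this pointwise statement to uniform convergence over the compact set $\Theta$ (Assumption~\ref{ass:2}(iv)), for which I would use a bracketing/covering argument. For $\theta_0\in\Theta$ and $\rho>0$, set $\overline{f}_{\theta_0,\rho}(y_{-\infty:0}):=\sup_{|\theta-\theta_0|\le\rho}\log p_\theta(y_0|y_{-\infty:-1})$ and let $\underline{f}_{\theta_0,\rho}$ denote the analogous infimum. By the continuity in $\theta$ of $\log p_\theta(y_0|y_{-\infty:-1})$ inherited from Assumption~\ref{ass:5} through the uniform-in-$\theta$ Cauchy limit, these brackets are measurable, and they admit a $\theta$-uniform integrable envelope: the bound $g_\theta\le b^+$ caps $\log p_\theta$ from above by $\log b^+$, while the small-set lower bound from Assumption~\ref{ass:3} gives $p_\theta(y_0|y_{-\infty:-1})\ge\sigma^- b^-(y_0)$ and hence an integrable lower envelope via $\mathbb{E}\,|\log b^-(y_0)|<\infty$. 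Applying Birkhoff to the finitely many bracketing functionals arising from a finite cover of $\Theta$, and using dominated convergence to drive $\mathbb{E}\,\overline{f}_{\theta_0,\rho}$ and $\mathbb{E}\,\underline{f}_{\theta_0,\rho}$ to $\ell(\theta_0)$ as $\rho\downarrow 0$, sandwiches $\tfrac1n\ell^s_\theta$ between empirical averages of $\overline f$ and $\underline f$ and sends the uniform error to zero. This domination-plus-equicontinuity step is where the real work lies, as it is the only point that genuinely exploits Assumption~\ref{ass:3} (for the lower envelope and the well-posedness of $\log p_\theta(y_0|y_{-\infty:-1})$) and the moment bound of Assumption~\ref{ass:4}. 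Continuity of $\ell$ then follows from continuity of the integrand and the same dominated convergence.

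For (ii), since $\ell$ is continuous on the compact set $\Theta$, the maximiser set $\theta_*$ is nonempty and compact and $m:=\max_\theta\ell(\theta)=\ell(\theta_*)$. Let $\epsilon_n:=\sup_\theta|\tfrac1n\ell_\theta(y_{0:n-1})-\ell(\theta)|\to 0$ a.s.\ by part (i). Because $\hat\theta_n$ maximises $\ell_\theta$, for any fixed $\theta'\in\theta_*$ we have $\ell(\hat\theta_n)\ge\tfrac1n\ell_{\hat\theta_n}-\epsilon_n\ge\tfrac1n\ell_{\theta'}-\epsilon_n\ge\ell(\theta')-2\epsilon_n=m-2\epsilon_n$, so $\ell(\hat\theta_n)\to m$. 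Fix any open neighbourhood $U\supseteq\theta_*$; on the compact set $\Theta\setminus U$ continuity gives $\sup_{\theta\notin U}\ell(\theta)=m-\delta$ for some $\delta>0$, whence $\ell(\hat\theta_n)\to m$ forces $\hat\theta_n\in U$ for all large $n$, i.e.\ $d(\hat\theta_n,\theta_*)\to 0$ a.s.
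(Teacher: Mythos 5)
Your proposal is correct and follows essentially the same route as the paper: the paper proves this result by citing Theorem 13.7 of \cite{douc:14}, whose argument is exactly what you have written out -- pass to the stationary log-likelihood via Proposition \ref{lem:station}, apply Birkhoff's ergodic theorem, upgrade to uniformity over the compact $\Theta$ via bracketing with the integrable envelopes supplied by Assumptions \ref{ass:3}--\ref{ass:4}, and conclude (ii) by the standard argmax-consistency argument. In effect, you have filled in the details that the paper delegates to the reference, with no gaps.
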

\begin{proof}
This is Theorem 13.7 of \cite{douc:14}. The proof of (i) is based on working with the
stationary version of the log-likelihood in (\ref{eq:extend}), permitted 
due to Proposition~\ref{prop:uniform}, and using Birkhoff's ergodic theorem.
\end{proof}
\noindent 
Recall that $\theta_*$ need not be thought of as the correct parameter value here, as no assumption of 
the class of HMMs containing the correct data-generating model 
is made in this section.	
To avoid identifiability 
issues, we make the following assumption on the HMM model.

\begin{assumption}
	\label{ass:six}	
	  $\theta_{*}$ is a singleton.
\end{assumption}

\noindent This implies immediately the following.

\begin{cor}
\label{cor:singleton}
The set of maxima $\hat{\theta}_n$ is a singleton for all large enough~$n$, w.p.~1, and $\lim_{n\rightarrow\infty}\hat{\theta}_n=\theta_*$,
w.p.~1. 
\end{cor}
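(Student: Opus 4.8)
The plan is to combine the set-valued consistency already available in Proposition~\ref{prop:uniform}(ii) with the uniqueness postulated in Assumption~\ref{ass:six}, and then to upgrade a ``closest-point'' statement into a genuine singleton claim. Identifying $\theta_*$ with its unique element (permitted by Assumption~\ref{ass:six}), Proposition~\ref{prop:uniform}(ii) reads $\inf_{a\in\hat{\theta}_n}|a-\theta_*|\to 0$, w.p.~1; this only controls the element of $\hat{\theta}_n$ nearest to $\theta_*$, so the first task is to show that the \emph{entire} set $\hat{\theta}_n$ collapses onto $\theta_*$.

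First I would run the standard M-estimation localisation argument. Fix $\varepsilon>0$ and set $K_\varepsilon:=\{\theta\in\Theta:\,|\theta-\theta_*|\ge \varepsilon\}$, which is compact. By continuity of $\ell$ (Proposition~\ref{prop:uniform}(i)) and uniqueness of the maximiser (Assumption~\ref{ass:six}), $m_\varepsilon:=\ell(\theta_*)-\sup_{\theta\in K_\varepsilon}\ell(\theta)>0$. The uniform a.s.\ convergence $\sup_{\theta\in\Theta}|\tfrac1n\ell_\theta(y_{0:n-1})-\ell(\theta)|\to 0$ then forces, for all large $n$, every $\theta\in K_\varepsilon$ to satisfy $\tfrac1n\ell_\theta(y_{0:n-1})<\ell(\theta_*)-\tfrac{m_\varepsilon}{2}<\tfrac1n\ell_{\theta_*}(y_{0:n-1})$, so that no maximiser can lie in $K_\varepsilon$; hence $\hat{\theta}_n\subseteq B(\theta_*,\varepsilon)$ eventually, w.p.~1. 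As $\varepsilon$ is arbitrary this yields the convergence assertion of the corollary, namely $\sup_{a\in\hat{\theta}_n}|a-\theta_*|\to 0$, w.p.~1.

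The delicate step, and the one I expect to be the real obstacle, is the \emph{singleton} claim: uniform convergence to a limit with a unique maximiser does not by itself prevent $\tfrac1n\ell_\theta(y_{0:n-1})$ from attaining its maximum at two distinct points, since small oscillatory fluctuations can create a pair of equal global maxima arbitrarily close to $\theta_*$. To rule this out I would establish local rigidity: namely that, w.p.~1, for all large $n$ the map $\theta\mapsto \tfrac1n\ell_\theta(y_{0:n-1})$ is strictly concave on a fixed neighbourhood $U$ of $\theta_*$. Granting that $\theta_*$ is interior and that the limit $\ell$ is twice continuously differentiable with negative-definite Hessian at $\theta_*$ (a non-degenerate maximum, supplied by the smoothness regularity imposed on $q_\theta,g_\theta$), the crux is to show that the Hessians of $\tfrac1n\ell_\theta$ converge uniformly on $U$ to that of $\ell$; this requires justifying the interchange of differentiation with the $n\to\infty$ and stationary-extension limits, leveraging the mixing of Assumption~\ref{ass:3} exactly as in the proof of Proposition~\ref{prop:uniform}. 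Negative-definiteness of the limiting Hessian then propagates to $\tfrac1n\ell_\theta$ on $U$ for large $n$, giving strict concavity there; combined with the containment $\hat{\theta}_n\subseteq U$ from the previous paragraph, strict concavity on $U$ admits at most one maximiser, so $\hat{\theta}_n$ is a singleton for all large $n$, and $\lim_n\hat{\theta}_n=\theta_*$ then follows from Proposition~\ref{prop:uniform}(ii).
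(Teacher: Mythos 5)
Your proposal is correct, but it is considerably more careful than the paper's own treatment, which offers no argument at all: the corollary is presented as an \emph{immediate} consequence of Proposition \ref{prop:uniform}(ii) and the uniqueness in Assumption \ref{ass:six}. Your first step (the localisation argument) is essentially what the paper implicitly invokes: it upgrades the inf-distance statement $d(\hat{\theta}_n,\theta_*)\to 0$ to $\sup_{a\in\hat{\theta}_n}|a-\theta_*|\to 0$ w.p.~1, which under Assumption \ref{ass:six} is all that the paper can mean by $\lim_n\hat{\theta}_n=\theta_*$.

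Where you genuinely depart from the paper is the singleton claim, and your instinct that this is ``the real obstacle'' is vindicated: it does \emph{not} follow from Assumptions \ref{ass:1}--\ref{ass:six} alone. Consider the i.i.d.\ model (a degenerate HMM with trivial hidden chain) $y_i\sim N\big((\theta-\theta_*)^2,1\big)$ on a compact $\Theta$ containing $\theta_*$ in its interior, with true law $N(0,1)$: all of Assumptions \ref{ass:1}--\ref{ass:six} hold, since $\ell(\theta)=-\tfrac12\{(\theta-\theta_*)^4+1\}+c$ has the unique maximiser $\theta_*$; yet the empirical likelihood depends on $\theta$ only through $(\theta-\theta_*)^2$, so whenever the sample mean $\bar{y}_n$ lies in $(0,1)$ -- which happens infinitely often w.p.~1 -- the argmax set is the pair $\{\theta_*\pm\sqrt{\bar{y}_n}\}$. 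So the singleton assertion genuinely requires second-order structure; note that in this example $\mathcal{J}_{\theta_*}=0$, i.e.\ it is exactly Assumption \ref{ass:non} that fails. Your proposed remedy -- locally uniform convergence of the Hessians to a negative-definite limit, hence strict concavity of $\theta\mapsto\ell_{\theta}(y_{0:n-1})$ near $\theta_*$ for all large $n$, hence at most one maximiser in the localising ball -- is precisely what the paper's later machinery supplies: Proposition \ref{prop:det} gives the Hessian convergence (so the interchange-of-limits step you flag as the crux need not be re-derived), Assumption \ref{ass:non} together with the definition (\ref{eq:lmatrix}) makes $\mathcal{J}_{\theta_*}$ positive definite, and Assumption \ref{ass:diff} provides the interiority of $\theta_*$ needed for the ball to be a convex subset of $\Theta$. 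In short, your proof is the right one, but it establishes the corollary under Assumptions \ref{ass:1}--\ref{ass:non}, not under the Assumptions \ref{ass:1}--\ref{ass:six} in force at the point where the corollary is stated; the paper's ``this implies immediately'' glosses over exactly the point you isolated. Since the corollary is only used downstream where the stronger assumptions hold anyway, this is a presentational gap in the paper rather than a flaw in its main results.
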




\section{Asymptotics under Model-Correctness}
\label{sec:asyL}
To examine the asymptotic behaviour of Information Criteria like AIC or
BIC one has to investigate the behaviour
of the log-likelihood evaluated at the MLE, $\ell_{\hat{\theta}_{n}}(y_{0:n-1})$, for increasing data size $n$.
Following closely \cite{douc:14}, we first pose the following 
assumption, with $\theta_{*}\in\Theta$
as determined in Proposition~\ref{prop:uniform} and Assumption \ref{ass:six}.
Here and in the sequel, all gradients and Hessians -- represented by $\nabla$ and $\nabla \nabla^{\top}$ respectively, adopting an `applied mathematics' notation -- are  
w.r.t.~the model parameter(s) $\theta$.

\begin{assumption}
\label{ass:diff}
 $\theta_*$ is in the interior of $\Theta$, and there exists  $\epsilon>0$ and an open neighbourhood $\mathcal{B}_{\epsilon}(\theta_*):=\{\theta\in\Theta:|\theta-\theta_{*}|<\epsilon\}$
of $\theta_{*}$ such that the following hold.
\begin{itemize}
\item[(i)] For any $(x,x{'})\in\mathsf{X}\times\mathsf{X}$
and $y\in\mathsf{Y},$ $\theta\mapsto q_{\theta}(x{'}|x)$ and
$\theta\mapsto g_{\theta}(y|x)$ are twice continuously differentiable on $\mathcal{B}_{\epsilon}(\theta_*)$.
\item[(ii)] $\sup_{\theta\in\mathcal{B}_{\epsilon}(\theta_*)}\sup_{x,x{'}\in\mathsf{X}^2}\big\{\, \big| \nabla\log q_{\theta}(x{'}|x)\big| + \big|\nabla\nabla^{\top}\log q_{\theta}(x{'}|x)\big|\,\big\} <\infty$.
\item[(iii)] For some $\delta>0$,
$$\mathbb{E}\,\big[\sup_{\theta\in\mathcal{B}_{\epsilon}(\theta_*)}\sup_{x\in\mathsf{X}}\big\{\,\big| \nabla\log g_{\theta}(y_0|x)\big|^{2+\delta} +\big| \nabla\nabla^{\top}\log g_{\theta}(y_0|x)\big|\,\big\}\, \big]<\infty.$$
%
%
%
\end{itemize}
\end{assumption}
\noindent 
$|\cdot|$ denotes the Euclidean norm for vector input or 
one of the standard equivalent matrix norms for matrix input. Assumption \ref{ass:diff} implies that, for any fixed $n$
the log-likelihood function is twice continuously differentiable in $\mathcal{B}_{\epsilon}(\theta_*)$ (standard use of bounded convergence theorem from Assumption \ref{ass:diff}(i)). Also, the score function has 
finite $(2+\delta)$--moment and the Hessian finite first moment, for any $\theta\in\mathcal{B}_{\epsilon}(\theta_*)$;  the proof of these statements requires use of Fisher's identity (used later on) together with parts (ii), (iii) of Assumption \ref{ass:diff} involving the gradient   
for the score function, and Louis' identity (see e.g.~\cite{poyi:11} for background on Fisher's, Louis' identities) for the Hessian together with the stated conditions 
for the matrices of second derivatives. We avoid further details.

We start off with a standard Taylor expansion,
\begin{align}
\ell_{\hat{\theta}_n}&(y_{0:n-1}) = \ell_{\theta_*}(y_{0:n-1}) +  \tfrac{\nabla^{\top} 
\ell_{\theta_*}(y_{0:n-1})}{\sqrt{n}}\sqrt{n}(\hat{\theta}_n - \theta_*)\nonumber \\[-0.1cm] & + \tfrac{1}{2}\sqrt{n}(\hat{\theta}_n - \theta_*)^{\top}\Big[\int_{0}^{1} \tfrac{\nabla \nabla^{\top} 
\ell_{s\hat{\theta}_n+(1-s)\theta_*}(y_{0:n-1})}{n}ds\,\Big]\sqrt{n}(\hat{\theta}_n - \theta_*),
\label{eq:11}
\end{align}
together with a corresponding one for the score function,
\begin{align}
0&\equiv\tfrac{\nabla \ell_{\hat{\theta}_n}(y_{0:n-1})}{\sqrt{n}} \nonumber \\ &\qquad \qquad =\tfrac{\nabla \ell_{\theta_*}(y_{0:n-1})}{\sqrt{n}} +  \Big[\int_{0}^{1}\tfrac{\nabla \nabla^{\top} 
\ell_{s\hat{\theta}_n+(1-s)\theta_*}(y_{0:n-1})ds}{n}\,\Big]\sqrt{n}(\hat{\theta}_n - \theta_*).
\label{eq:22}
\end{align}
%
%
%
We will look at the asymptotic properties of the 
score function terms and the integral involving the Hessian, i.e.~of, 
\begin{equation}
\label{eq:twoterms}
\nabla
\ell_{\theta_*}(y_{0:n-1})/\sqrt{n}, \quad \int_{0}^{1}\tfrac{\nabla \nabla^{\top} 
\ell_{s\hat{\theta}_n+(1-s)\theta_*}(y_{0:n-1})ds}{n},
\end{equation}
starting from the former.

We will sometimes work under the assumption of model-correctness
and we shall be clear when that is the case.
\begin{assumption}
\label{ass:cor}
The dynamics of the data generating process $\{y_k\}_{k\ge 0}$ 
correspond to those of the HMM with initial distribution 
$x_0\sim \eta(\cdot)\equiv  \pi_{\theta_*}^{X}(\cdot)$, transition kernel $Q_{\theta_*}(\cdot|x)$ and observation kernel $G_{\theta_*}(\cdot|x)$. 
\end{assumption}
\noindent  For results that do not refer to Assumption \ref{ass:cor}, $\theta_*$ still makes sense as per its definition in Proposition~\ref{prop:uniform}.
Using Jensen's inequality, and for $\theta_*$ corresponding to the true parameter, 
one can easily check that $\ell(\theta)\le \ell(\theta_*)$, so indeed 
the true parameter coincides with~$\theta_*$ given in Proposition \ref{prop:uniform}.  
 
Following \cite[Ch.~13]{douc:14} 
we obtain:
\begin{itemize}
\item[1.] Re-write the score function evaluated at $\theta=\theta_*$ as 
\begin{equation}
\label{eq:split}
\nabla\ell_{\theta_*}(y_{0:n-1}) = \sum_{i=0}^{n-1}\big[\,\nabla \ell_{\theta_*}(y_{0:i})-\nabla\ell_{\theta_*}(y_{0:i-1}) \,\big],  
\end{equation} 
under the convention that $\nabla \ell_{\theta_*}(y_{0:-1})\equiv 0$.
The above differences will be shown to converge -- for increasing data size $n$, in an appropriate sense -- 
to stationary (and ergodic) martingale increments.
\item[2.] Using Fisher's identity, 
one has, for $y_{0:k}\in \mathsf{Y}^{k+1}$, $k\ge 0$,
\begin{align*}
\nabla\ell_{\theta_*}(y_{0:k})&=  
\int_{\mathsf{X}^{k+1}} 
\nabla\log p_{\theta_*}(x_{0:k},y_{0:k}) 
 \,p_{\theta_*}(x_{0:k}| y_{0:k})\mu^{\otimes (k+1)}(dx_{0:k}) \\[0.1cm]
& =
  \sum_{j=0}^{k} \int_{\mathsf{X}^2} d_{\theta_*}(x_{j-1},x_j,y_j) p_{\theta_*}(x_{j-1:j}|y_{0:k})
  \mu^{\otimes 2}(dx_{j-1:j}),  
\end{align*}
where we have defined 
\begin{align*}
d_{\theta_*}(x_{j-1},x_j,y_j):= \nabla \log\,[\,q_{\theta_*}(x_j|x_{j-1})\,g_{\theta_*}(y_j|x_j)\,], \quad j\ge 0,
\end{align*}
with the conventions
\begin{align*}
d_{\theta_*}(x_{-1},x_0,y_0)\equiv
d_{\theta_*}(x_0,y_0)
\equiv \nabla \log\,[\,\eta(x_0)\,g_{\theta_*}(y_0|x_0)\,]
\end{align*}
and the one
\begin{align*}
\int_{\mathsf{X}^2} d_{\theta_*}(x_{-1},x_0,y_0) &p_{\theta_*}(x_{-1:0}|y_{0:k})\mu^{\otimes 2}({dx_{-1:0}}) \\ 
&\equiv \int_{\mathsf{X}} d_{\theta_*}(x_0,y_0) p_{\theta_*}(x_{0}|y_{0:k})\mu(dx_0).
\end{align*}
Thus, we have, for $i\ge 0$,
\begin{align}
h_{\theta_*}(y_{0:i}):&=\nabla\ell_{\theta_*}(y_{0:i})-\nabla\ell_{\theta_*}(y_{0:i-1})
\label{eq:hdefine}  \\ &= 
\int_{\mathsf{X}^2} d_{\theta_*}(x_{i-1},x_i,y_i) p_{\theta_*}(x_{i-1:i}|y_{0:i})
\mu^{\otimes 2}(dx_{i-1:i}) \nonumber \\ &\quad\,\,\, + \sum_{j=0}^{i-1}\Big[\, \int_{\mathsf{X}^2} d_{\theta_*}(x_{j-1},x_j,y_j) p_{\theta_*}(x_{j-1:j}|y_{0:i})
\mu^{\otimes 2}(dx_{j-1:j}) \nonumber \\[-0.1cm] & \quad \quad \quad - \int_{\mathsf{X}^2} d_{\theta_*}(x_{j-1},x_j,y_j) p_{\theta_*}(x_{j-1:j}|y_{0:i-1})\mu^{\otimes 2}(dx_{j-1:j})  \,\Big]. \nonumber
\end{align}
\item[3.] To obtain stationary increments for increasing $n$, \cite{douc:14} work with (for $i\ge 0$)
\begin{align*}
h_{\theta_*}&(y_{-\infty:i}): = 
\int_{\mathsf{X}^2} d_{\theta_*}(x_{i-1},x_i,y_i) p_{\theta_*}(x_{i-1:i}|y_{-\infty:i})
\mu^{\otimes 2}(dx_{i-1:i}) \\ &\,\, + \sum_{j=-\infty}^{i-1}\Big[\, \int_{\mathsf{X}^2} d_{\theta_*}(x_{j-1},x_j,y_j) p_{\theta_*}(x_{j-1:j}|y_{-\infty:i})\mu^{\otimes 2}(dx_{j-1:j}) \\ & \qquad \qquad - \int_{\mathsf{X}^2} d_{\theta_*}(x_{j-1},x_j,y_j) p_{\theta_*}(x_{j-1:j}|y_{-\infty:i-1})\mu^{\otimes 2}(dx_{j-1:j})  \,\Big].
\end{align*} 
Following \cite[Proposition 13.20]{douc:14}, integrals involving infinitely long data sequences 
 of the form 
\begin{equation*} 
 \int_{\mathsf{X}^2} d_{\theta_*}(x_{j-1},x_j,y_j) p_{\theta_*}(x_{j-1:j}|y_{-\infty:i})\mu^{\otimes 2}(dx_{j-1:j}), \quad j\le i, \quad i \ge  0, 
\end{equation*} 
 appearing above are defined as a.s.~or $L_2$-limits of the random variables $\int_{\mathsf{X}^2} d_{\theta_*}(x_{j-1},x_j,y_j) p_{\theta_*}(x_{j-1:j}|y_{-m:i})\mu^{\otimes 2}(dx_{j-1:j})$, 
with $m\rightarrow \infty$, given Assumptions \ref{ass:1}-\ref{ass:diff}.
A small modification of the derivations in 
\cite[Ch.13]{douc:14} (they look at second moments) gives that, 
under Assumptions~\ref{ass:1}-\ref{ass:diff}, and for constant $\delta>0$ as defined in Assumption \ref{ass:diff}(iii),
for $i \ge 0$,
\begin{align}
\big\| h_{\theta_*}(y_{0:i} ) -& h_{\theta_*}(y_{-\infty:i}) \big\|_{2+\delta} \nonumber \\
&\le 12 \,\mathbb{E}^{1/(2+\delta)}\big[\!\!\sup_{x,x'\in\mathsf{X}}|d_{\theta_*}(x,x',y_0)|^{2+\delta}\,\big]      \,\tfrac{\rho^{i/2-1}}{1-\rho}, \label{eq:hbound}
\end{align}
where $\rho = 1 - \sigma^-/\sigma^+$.
(The expectation in the upper bound is finite due to 
Assumption \ref{ass:diff}(ii),(iii).)  
Here and below, $\|\cdot\|_a$, $a\ge 1$, denotes the $L_a$--norm of the variable under consideration.
From triangular inequality we have,
\begin{align*}
\Big\|  n^{-1/2} \sum_{i=0}^{n-1} \big\{ h_{\theta_*}&(y_{0:i} )-  h_{\theta_*}(y_{-\infty:i})  \big\}\Big\|_{2+\delta} \\ &\le n^{-1/2} 
\sum_{i=0}^{n-1}\| h_{\theta_*}(y_{0:i} ) - h_{\theta_*}(y_{-\infty:i}) \|_{2+\delta}.
\end{align*}
%
Thus, recalling equation (\ref{eq:split}) and definition (\ref{eq:hdefine}), 
the bound (\ref{eq:hbound}) implies
\begin{align}
\label{eq:nice}
\tfrac{\nabla \ell_{\theta_*}(y_{0:n-1})}{\sqrt{n}} =
n^{-1/2} \sum_{i=0}^{n-1}  h_{\theta_*}(y_{-\infty:i})  
+ \mathcal{O}_{L_{2+\delta}}(n^{-1/2}).
\end{align}
For $a\ge 1$ and a sequence of positive reals $\{b_k\}$, $\mathcal{O}_{L_{a}}(b_n)$ denotes a sequence of random variables 
with $L_{a}$-norm being $\mathcal{O}(b_n)$.
\item[4.] At this point we are required  to make explicit use of the model-correctness 
Assumption~\ref{ass:cor}. We have
\begin{align*}
&\EE\,[\,h_{\theta_*}(y_{-\infty:i})|y_{-\infty:i-1}\,]  = 
\EE\,\Big[\, \EE\,[\,d_{\theta_*}(x_{i-1},x_i,y_i)|y_{-\infty:i}\,]\,\Big|\, y_{-\infty:i-1}\,\Big]
 \\ & + \sum_{j=-\infty}^{i-1}\EE\,\Big[\,
\big\{\,\EE\,[\, d_{\theta_*}(x_{j-1},x_j,y_j) |y_{-\infty:i}]  \\  
&\qquad\qquad\qquad\qquad\qquad - \EE\,[\,d_{\theta_*}(x_{j-1},x_j,y_j)|y_{-\infty:i-1}]\,\big\} \,\Big|\, y_{-\infty:i-1}\,\Big]
\end{align*}
Each term in the sum is trivially $0$.
For the first term, we have, 
\begin{align*}
 \EE\,[\,d_{\theta_*}(x_{i-1},x_i,y_i)|y_{-\infty:i-1}\,]
  &=   \EE\,\big[\,\EE\,[\,d_{\theta_*}(x_{i-1},x_i,y_i)\,|\,x_{i-1},y_{-\infty:i-1}\,]\,\big] \\
& \equiv 0. 
\end{align*}
%
%
 %
Notice that we have indeed used the model correctness assumption 
to obtain the latter result. 
So, terms $h_{\theta_*}(y_{-\infty:i})$ make up  a strongly stationary, 
 ergodic (they inherit the properties of the data generating process) martingale increment sequence -- of finite second moment -- under the filtration generated by the data.
Using a CLT \citep{hall:80}
and the LIL of \cite{stou:70} for such sequences
allows for control over the martingales
\begin{equation*}
M_{n,j} := \sum_{i=0}^{n-1}  (h_{\theta_*}(y_{-\infty:i}))_j\ ,
\quad 1\le j \le d. 
\end{equation*}
Subscript $j$ indicates the $j$-th component of the $d$-dimensional vectors. 
In particular, we have the CLT (`$\Rightarrow$' denotes weak convergence, and $N_d(a,B)$ 
the $d$-dimensional Gaussian law with mean $a$ and covariance matrix $B$)
\begin{align}
\label{eq:maCLT}
\frac{M_{n}}{\sqrt{n}} \Rightarrow N_d(0,\mathcal{J}_{\theta_*}),
\end{align} 
where we have defined,
\begin{align}
\label{eq:lmatrix}
\mathcal{J}_{\theta_*} = \EE\,[\,h_{\theta_*}(y_{-\infty:0})h_{\theta_*}(y_{-\infty:0})^{\top} \,].
\end{align}
Also, we have the LIL \citep{stou:70}, 
\begin{align}
\label{eq:maLIL}
\limsup_{n} \tfrac{|M_{n,j}|}{ \sqrt{2n\log\log n}}  = \EE^{1/2}\big[\, (h_{\theta_*}(y_{-\infty:0}))_j^2\,\big], \quad 1\le j \le d, \quad w.p.\,1. 
\end{align}
\end{itemize}
We now turn to the second term in (\ref{eq:twoterms}). 
\begin{prop}
\label{prop:det}
Under Assumptions \ref{ass:1}-\ref{ass:diff}, we have that, w.p.~1, 
\begin{equation*}
\lim_{\delta\rightarrow 0}\lim_{n\rightarrow \infty} \sup_{\theta\in\mathcal{B}_\delta(\theta_*)} \big|  (-\nabla \nabla^{\top} \ell_{\theta}(y_{0:n-1})/n )   -  \mathcal{J}_{\theta_*}     \big| = 0.
\end{equation*}
\end{prop}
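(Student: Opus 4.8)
The plan is to follow the route used for the score in Steps 1--4, but now for the matrix of second derivatives and, crucially, to make the analysis \emph{uniform} in $\theta$ over a neighbourhood of $\theta_*$. First I would apply Louis' identity (already invoked after Assumption \ref{ass:diff}; see \cite{poyi:11}) to express the observed-data Hessian through complete-data quantities,
\begin{align*}
-\nabla\nabla^{\top}\ell_{\theta}(y_{0:n-1}) &= \EE_{\theta}\big[\,-\nabla\nabla^{\top}\log p_{\theta}(x_{0:n-1},y_{0:n-1})\,\big|\,y_{0:n-1}\,\big] \\
&\quad - \mathrm{Cov}_{\theta}\big[\,\nabla\log p_{\theta}(x_{0:n-1},y_{0:n-1})\,\big|\,y_{0:n-1}\,\big],
\end{align*}
with the conditional moments taken under the smoothing law $p_{\theta}(x_{0:n-1}|y_{0:n-1})$. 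Because the complete-data log-likelihood is additive over time, the first term is a sum of smoothed local Hessians $\nabla\nabla^{\top}\log[\,q_{\theta}g_{\theta}\,]$, while the second is a double sum over time pairs of smoothed cross-products of the local increments $d_{\theta}(\cdot)$ of (\ref{eq:hdefine}).

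Next, exactly as in the derivation of the geometric bound (\ref{eq:hbound}), I would replace each finite-horizon smoother $p_{\theta}(\cdot|y_{0:k})$ by its infinite-horizon, stationary counterpart $p_{\theta}(\cdot|y_{-\infty:k})$. The forgetting guaranteed by Assumption \ref{ass:3} (Dobrushin coefficient $\rho=1-\sigma^{-}/\sigma^{+}$) controls each such replacement with an error decaying geometrically in the time lag, and the same forgetting makes the off-diagonal entries of the conditional-covariance double sum geometrically small, so that, after normalising by $n$, it converges to an absolutely summable series rather than diverging. The moment conditions of Assumption \ref{ass:diff}(ii),(iii) are what render these series integrable and justify the interchange of limits; this is the analogue for the Hessian of the second-moment computation of \cite[Ch.~13]{douc:14}.

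With a stationary, ergodic representation of $-\nabla\nabla^{\top}\ell_{\theta}(y_{0:n-1})/n$ in hand for each fixed $\theta$, I would apply Birkhoff's ergodic theorem to obtain pointwise a.s.\ convergence to a deterministic limit $\mathcal{H}(\theta)$. To reach the uniform statement I would upgrade this to a uniform strong law over $\mathcal{B}_{\delta}(\theta_*)$: the twice-differentiability and the uniform-in-$\theta$ bounds of Assumption \ref{ass:diff} give local equicontinuity of the summands in $\theta$, which with compactness passes from pointwise to uniform a.s.\ convergence and yields continuity of $\theta\mapsto\mathcal{H}(\theta)$. The two nested limits then resolve: for fixed $\delta$, uniform convergence gives
\begin{equation*}
\lim_{n\to\infty}\sup_{\theta\in\mathcal{B}_{\delta}(\theta_*)}\big|\,{-}\nabla\nabla^{\top}\ell_{\theta}(y_{0:n-1})/n-\mathcal{J}_{\theta_*}\,\big| = \sup_{\theta\in\mathcal{B}_{\delta}(\theta_*)}\big|\,\mathcal{H}(\theta)-\mathcal{J}_{\theta_*}\,\big|,
\end{equation*}
and letting $\delta\to 0$ the right-hand side tends to $|\mathcal{H}(\theta_*)-\mathcal{J}_{\theta_*}|$ by continuity of $\mathcal{H}$.

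It then remains to identify $\mathcal{H}(\theta_*)=\mathcal{J}_{\theta_*}$, i.e.\ that the limiting expected negative Hessian equals the score covariance (\ref{eq:lmatrix}). This is the information matrix (second Bartlett) identity at $\theta_*$, and it is here --- as in Step 4 for the score --- that model correctness (Assumption \ref{ass:cor}) enters, the identity requiring $\theta_*$ to be the true parameter; the interchange of differentiation and integration underpinning it is licensed by Assumption \ref{ass:diff}. I expect the main obstacle to be precisely the conditional-covariance term: showing that its $n$-normalised double sum converges, uniformly in $\theta$ over the shrinking ball, rests on a careful quantitative use of the geometric forgetting of the smoother and on making the infinite-horizon approximation rigorous under only the $(2+\delta)$ moment control available for $g_{\theta}$ (Assumption \ref{ass:diff}(iii)), as opposed to the bounded gradients and Hessians available for $q_{\theta}$ (Assumption \ref{ass:diff}(ii)).
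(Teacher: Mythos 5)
Your proposal is, in outline, the same argument as the one the paper relies on: the paper's proof is a one-line citation to Theorem 13.24 of \cite{douc:14}, and that theorem is proved essentially along your route --- Louis' identity to reduce the observed Hessian to smoothed additive functionals plus a conditional-covariance double sum, geometric forgetting of the smoother under Assumption \ref{ass:3} to pass to the stationary infinite-past extension and to make the double sum summable after normalisation by $n$, Birkhoff's ergodic theorem for the pointwise limit, and a local equicontinuity/compactness upgrade to uniformity over $\mathcal{B}_\delta(\theta_*)$ together with continuity of the limit. As a reconstruction of the cited proof the plan is sound, and your closing remarks correctly locate the technical burden (the conditional-covariance term, and the fact that only $(2+\delta)$-moment control is available for the $g_\theta$ derivatives while the $q_\theta$ derivatives are uniformly bounded).

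The one point of friction is your final step, and it is worth being precise about it. You invoke Assumption \ref{ass:cor} (model correctness) to identify the ergodic limit $\mathcal{H}(\theta_*)$ of the negative normalised Hessian with $\mathcal{J}_{\theta_*}$ as defined in (\ref{eq:lmatrix}), via the second Bartlett identity. You are mathematically right that this identification needs correctness: without it, the limiting Hessian and the stationary score covariance are different matrices in general (already in the i.i.d.\ sub-case this is the White sandwich phenomenon). But the proposition as stated grants only Assumptions \ref{ass:1}--\ref{ass:diff}, i.e.\ it does not include Assumption \ref{ass:cor}, and the paper even remarks after Proposition \ref{prop:matrix} that model correctness is not required. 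That remark is accurate only if $\mathcal{J}_{\theta_*}$ is read there as ``the limit of the normalised Hessian'' rather than as the score covariance of (\ref{eq:lmatrix}); under the latter reading the statement does need Assumption \ref{ass:cor}, exactly as your proof does (and as the well-specified setting of Theorem 13.24 in \cite{douc:14}, and indeed the title of Section \ref{sec:asyL}, presuppose). So this is not a gap in your argument so much as an imprecision in the statement you were asked to prove: your proof establishes the result under Assumptions \ref{ass:1}--\ref{ass:diff} together with \ref{ass:cor}, whereas without \ref{ass:cor} only the uniform convergence to \emph{some} limit matrix $\mathcal{H}(\theta_*)$ --- not the equality $\mathcal{H}(\theta_*)=\mathcal{J}_{\theta_*}$ --- survives.
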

\begin{proof}
This is Theorem 13.24 of \cite{douc:14}.
\end{proof}

\begin{prop}
\label{prop:matrix}
Under Assumptions \ref{ass:1}-\ref{ass:diff} we have that, w.p.~1,
\begin{align*}
J_{\theta_*}(y_{0:n-1}):= -\int_{0}^{1}\tfrac{\nabla \nabla^{\top} 
\ell_{s\hat{\theta}_n+(1-s)\theta_*}(y_{0:n-1})ds}{n} \longrightarrow \mathcal{J}_{\theta_*}.
\end{align*}
\end{prop}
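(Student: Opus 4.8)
The plan is to deduce this pathwise by marrying the deterministic, uniform-in-$\theta$ convergence of the normalised Hessian supplied by Proposition \ref{prop:det} with the strong consistency of the MLE from Corollary \ref{cor:singleton}. The key structural observation is that the segment $\{s\hat{\theta}_n+(1-s)\theta_*:s\in[0,1]\}$ over which $J_{\theta_*}(y_{0:n-1})$ integrates has both endpoints converging to $\theta_*$, so the whole segment is eventually trapped inside any prescribed ball $\mathcal{B}_\delta(\theta_*)$; the $s$-integration can then be controlled uniformly by the supremum over that ball.

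First I would work on the probability-one event on which both Proposition \ref{prop:det} and Corollary \ref{cor:singleton} hold; as a finite intersection of full-measure events it again has probability one, and I argue on it from here. I fix such an outcome and an arbitrary $\delta>0$. Since $\hat{\theta}_n\to\theta_*$, there is $n_0=n_0(\delta)$ with $|\hat{\theta}_n-\theta_*|<\delta$ for all $n\ge n_0$; as $\mathcal{B}_\delta(\theta_*)$ is convex, the entire integration segment lies in $\mathcal{B}_\delta(\theta_*)$ for those $n$. For $n$ large the segment also sits inside the neighbourhood of Assumption \ref{ass:diff}, so the Hessian and hence $J_{\theta_*}(y_{0:n-1})$ are well defined, and continuity of the Hessian in $\theta$ renders the integrand continuous in $s$, making the integral meaningful.

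Next, bounding the norm of the integral by the integral of the norm and then by the supremum over the ball, I would obtain, for all $n\ge n_0$,
\begin{align*}
\big|J_{\theta_*}(y_{0:n-1})-\mathcal{J}_{\theta_*}\big|
&=\Big|\int_0^1\big(-\nabla\nabla^{\top}\ell_{s\hat{\theta}_n+(1-s)\theta_*}(y_{0:n-1})/n-\mathcal{J}_{\theta_*}\big)\,ds\Big|\\
&\le\sup_{\theta\in\mathcal{B}_\delta(\theta_*)}\big|-\nabla\nabla^{\top}\ell_{\theta}(y_{0:n-1})/n-\mathcal{J}_{\theta_*}\big|.
\end{align*}
Taking $\limsup_{n\to\infty}$ and writing $L(\delta):=\lim_{n\to\infty}\sup_{\theta\in\mathcal{B}_\delta(\theta_*)}|-\nabla\nabla^{\top}\ell_{\theta}(y_{0:n-1})/n-\mathcal{J}_{\theta_*}|$ for the inner limit (which exists for each fixed $\delta$ by Proposition \ref{prop:det}), this yields
\[
\limsup_{n\to\infty}\big|J_{\theta_*}(y_{0:n-1})-\mathcal{J}_{\theta_*}\big|\le L(\delta).
\]

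Finally, the left-hand side is free of $\delta$, whereas Proposition \ref{prop:det} states precisely that $L(\delta)\to 0$ as $\delta\to 0$; letting $\delta\downarrow 0$ therefore forces the $\limsup$ to vanish, giving $J_{\theta_*}(y_{0:n-1})\to\mathcal{J}_{\theta_*}$ on the chosen event and hence w.p.\ 1. I expect the only delicate point to be the bookkeeping of the three limiting operations: the $s$-integration is removed once and for all by the supremum bound, and the nested order $\lim_{\delta\to0}\lim_{n\to\infty}$ in Proposition \ref{prop:det} matches exactly the order in which I take $\limsup_n$ first and send $\delta\to0$ afterwards, so no genuine interchange-of-limits obstacle arises beyond keeping that ordering straight.
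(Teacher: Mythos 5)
Your proposal is correct and is essentially the paper's own argument: the paper's proof consists of the single remark that the result ``is implied immediately from Proposition \ref{prop:det} (recall that $\hat{\theta}_n\rightarrow\theta_*$)''. What you have written is exactly the careful expansion of that remark --- trapping the integration segment in $\mathcal{B}_\delta(\theta_*)$ via convexity and MLE consistency, bounding the $s$-integral by the supremum over the ball, and then taking $\limsup_n$ followed by $\delta\downarrow 0$ in the order matching the nested limits of Proposition \ref{prop:det}.
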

\begin{proof}
This is implied immediately from Proposition \ref{prop:det} (recall that 
$\hat{\theta}_n\rightarrow\theta_*$). 
\end{proof}

\noindent Notice that this result does not require the assumption of model correctness.
 We do make the following assumption on the HMM model under consideration.
\begin{assumption}
\label{ass:non}
The matrix $\mathcal{J}_{\theta_*}\in \mathbb{R}^{d\times d}$ is non-singular.
\end{assumption}

 We summarise the results in this part with a proposition and theorem.

\begin{prop}
\label{prop:all}
\begin{itemize}
\item[(i)] Under Assumptions \ref{ass:1}-\ref{ass:diff}, \ref{ass:non} we have,
for all large enough~$n$, 
\begin{align*}
\ell_{\hat{\theta}_n}(y_{0:n-1})  =
\ell_{\theta_*}(y_{0:n-1}) + 
\tfrac{1}{2}\tfrac{\nabla^{\top} 
\ell_{\theta_*}(y_{0:n-1})}{\sqrt{n}}J_{\theta_*}^{-1}(y_{0:n-1})
\tfrac{\nabla
\ell_{\theta_*}(y_{0:n-1})}{\sqrt{n}},
\end{align*}
where 
$J_{\theta_*}(y_{0:n-1})\rightarrow \mathcal{J}_{\theta_*}$, w.p.~1, for the non-singular matrix 
$\mathcal{J}_{\theta_*}$ defined  in (\ref{eq:lmatrix}).
\item[(ii)] 
 Under Assumptions \ref{ass:1}-\ref{ass:diff}, \ref{ass:non} we have, for all large enough~$n$,  
\begin{align*}
\tfrac{\nabla \ell_{\theta_*}(y_{0:n-1})}{\sqrt{n}} =
n^{-1/2} \sum_{i=0}^{n-1}  h_{\theta_*}(y_{-\infty:i})  
+ n^{-1/2} R_n
\end{align*}
where $\|h_{\theta_*}(y_{-\infty:i})\|_{2+\delta} +  \|R_n\|_{2+\delta} \le  C$, for  $\delta>0$ as determined in Assumption \ref{ass:diff}(iii) and a constant $C>0$.
\item[(iii)] Under Assumptions \ref{ass:1}-\ref{ass:non},
w.p.~1, as $n\rightarrow \infty$, $n^{-1/2}R_n\rightarrow 0$, and
we have the CLT 
\begin{gather*}
\frac{\nabla \ell_{\theta_*}(y_{0:n-1})}{ \sqrt{n}}  \Rightarrow
N_d(0,\mathcal{J}_{\theta_*}).
\end{gather*}
\end{itemize}
\end{prop}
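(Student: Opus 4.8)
The plan is to treat all three parts as a consolidation of the machinery already assembled, so the work is mainly careful bookkeeping rather than new estimation. For part (i), I would start from the two Taylor expansions \eqref{eq:11} and \eqref{eq:22}. Since $\hat\theta_n\to\theta_*$ by Corollary~\ref{cor:singleton}, for all large enough $n$ the estimator lies in $\mathcal{B}_\epsilon(\theta_*)$, where the log-likelihood is twice continuously differentiable by Assumption~\ref{ass:diff}, so both expansions are legitimate. Recognising the bracketed Hessian integral as $-J_{\theta_*}(y_{0:n-1})$, equation \eqref{eq:22} reads $0=\nabla\ell_{\theta_*}(y_{0:n-1})/\sqrt n-J_{\theta_*}(y_{0:n-1})\sqrt n(\hat\theta_n-\theta_*)$. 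By Proposition~\ref{prop:matrix}, $J_{\theta_*}(y_{0:n-1})\to\mathcal{J}_{\theta_*}$ w.p.~$1$, and the limit is non-singular by Assumption~\ref{ass:non}; hence $J_{\theta_*}(y_{0:n-1})$ is invertible for all large $n$ and $\sqrt n(\hat\theta_n-\theta_*)=J_{\theta_*}^{-1}(y_{0:n-1})\,\nabla\ell_{\theta_*}(y_{0:n-1})/\sqrt n$. Substituting this into the first- and second-order terms of \eqref{eq:11}, and using the symmetry of $J_{\theta_*}(y_{0:n-1})$ together with $J^{-1}JJ^{-1}=J^{-1}$ to simplify the quadratic form arising from the Hessian term, the two contributions combine into the single quadratic form with coefficient $\tfrac12$ stated in the proposition.

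For part (ii), I would set $R_n:=\nabla\ell_{\theta_*}(y_{0:n-1})-\sum_{i=0}^{n-1}h_{\theta_*}(y_{-\infty:i})$. By \eqref{eq:split} and \eqref{eq:hdefine}, this remainder telescopes into $\sum_{i=0}^{n-1}\{h_{\theta_*}(y_{0:i})-h_{\theta_*}(y_{-\infty:i})\}$, so the triangle inequality together with the geometric bound \eqref{eq:hbound} and summation of the convergent series $\sum_i\rho^{i/2}$ yields a uniform-in-$n$ bound $\|R_n\|_{2+\delta}\le C$. The companion bound $\|h_{\theta_*}(y_{-\infty:i})\|_{2+\delta}\le C$ follows from strong stationarity (the norm equals $\|h_{\theta_*}(y_{-\infty:0})\|_{2+\delta}$) and from the finiteness guaranteed by the moment conditions in Assumption~\ref{ass:diff}(ii),(iii).

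For part (iii), the CLT itself is essentially in hand: under model-correctness (Assumption~\ref{ass:cor}) the increments $h_{\theta_*}(y_{-\infty:i})$ form a strongly stationary, ergodic, square-integrable martingale-difference sequence, so $M_n/\sqrt n\Rightarrow N_d(0,\mathcal{J}_{\theta_*})$ by \eqref{eq:maCLT}. The step that needs genuine (if short) argument is the almost-sure negligibility of the remainder: from part (ii) one has $\mathbb{E}\,|n^{-1/2}R_n|^{2+\delta}\le C^{2+\delta}n^{-1-\delta/2}$, which is summable in $n$ precisely because $\delta>0$, so Markov's inequality combined with the Borel--Cantelli lemma gives $n^{-1/2}R_n\to0$ w.p.~$1$. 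A Slutsky argument then merges this almost-surely (hence in-probability) vanishing term with the weakly convergent martingale term $M_n/\sqrt n$ to deliver the stated CLT for $\nabla\ell_{\theta_*}(y_{0:n-1})/\sqrt n$.

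I expect the only genuinely delicate point to be this last upgrade from $L_{2+\delta}$ control of $R_n$ to almost-sure negligibility: it is exactly here that the strengthening from second moments to $(2+\delta)$ moments in Assumption~\ref{ass:diff}(iii) is used, since an $L_2$ bound alone would leave the Borel--Cantelli series at the borderline $\sum_n n^{-1}$ and fail to converge. Everything else reduces to the algebraic rearrangement of \eqref{eq:11}--\eqref{eq:22} and to moment bookkeeping inherited from \eqref{eq:hbound} and the martingale CLT and LIL of \eqref{eq:maCLT}--\eqref{eq:maLIL}.
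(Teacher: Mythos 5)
Your proposal is correct and follows essentially the same route as the paper: part (i) is exactly the combination of the Taylor expansions \eqref{eq:11}--\eqref{eq:22} with the matrix inversion justified by Proposition~\ref{prop:matrix} and Assumption~\ref{ass:non}; part (ii) is the telescoping rearrangement behind \eqref{eq:nice} using the geometric bound \eqref{eq:hbound}; and part (iii) combines the martingale CLT \eqref{eq:maCLT} with Slutsky. The only difference is one of presentation: the Markov-plus-Borel--Cantelli upgrade from $L_{2+\delta}$ control to almost-sure negligibility of $n^{-1/2}R_n$, which you correctly flag as the one delicate point (and as the reason $\delta>0$ is needed), is the very argument the paper spells out later in the proof of Theorem~\ref{th:LIL} rather than in the proof of this proposition.
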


\begin{proof}
The equation in part (i) is a combination of equations (\ref{eq:11}), (\ref{eq:22}), assuming that $n$ is big enough to permit inversion of the involved matrix.
(ii) is simply a rewriting of earlier calculations. 
The CLT in (iii) is trivial. 
\end{proof}

\begin{theorem}
\label{th:LIL}
Under Assumptions \ref{ass:1}-\ref{ass:non}, we have the LIL,
\begin{gather*}
\limsup_{n} \tfrac{  | (\nabla \ell_{\theta_*}(y_{0:n-1}))_j |}{ \sqrt{2n\log\log n}}  = \EE^{1/2}\big[\,   (h_{\theta_*}(y_{-\infty:0}))_j^2\,\big], \quad 1\le j \le d.
\end{gather*} 
\end{theorem}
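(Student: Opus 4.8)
The plan is to import the martingale LIL (\ref{eq:maLIL}) and to show that the gap between the score component $(\nabla\ell_{\theta_*}(y_{0:n-1}))_j$ and the martingale partial sum $M_{n,j}=\sum_{i=0}^{n-1}(h_{\theta_*}(y_{-\infty:i}))_j$ is negligible at the rate $\sqrt{2n\log\log n}$. Proposition \ref{prop:all}(ii) supplies exactly the decomposition needed: for all large enough $n$,
\begin{equation*}
(\nabla\ell_{\theta_*}(y_{0:n-1}))_j = M_{n,j} + (R_n)_j, \qquad \|R_n\|_{2+\delta}\le C.
\end{equation*}
Since $\limsup_n$ is unchanged by adding to a sequence a term converging to $0$, and since by the triangle inequality
\begin{equation*}
\Big| \tfrac{|M_{n,j}+(R_n)_j|}{\sqrt{2n\log\log n}} - \tfrac{|M_{n,j}|}{\sqrt{2n\log\log n}} \Big| \le \tfrac{|(R_n)_j|}{\sqrt{2n\log\log n}},
\end{equation*}
it suffices to establish that $(R_n)_j/\sqrt{2n\log\log n}\to 0$, w.p.~1; the stated identity then follows directly from the martingale LIL (\ref{eq:maLIL}), which is already available under Assumptions \ref{ass:1}--\ref{ass:non} (in particular the martingale structure of $h_{\theta_*}$ rests on the model-correctness Assumption \ref{ass:cor}).

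For the remainder I would argue by Borel--Cantelli, exploiting the uniform $(2+\delta)$--moment bound on $R_n$. Fixing $\epsilon>0$ and applying Markov's inequality with the $(2+\delta)$-th power gives, for all large $n$,
\begin{equation*}
\PP\Big( \tfrac{|(R_n)_j|}{\sqrt{2n\log\log n}} > \epsilon \Big) \le \frac{\EE\,|(R_n)_j|^{2+\delta}}{\epsilon^{2+\delta}\,(2n\log\log n)^{1+\delta/2}} \le \frac{C^{2+\delta}}{\epsilon^{2+\delta}\,(2n\log\log n)^{1+\delta/2}}.
\end{equation*}
Because $\delta>0$, the exponent satisfies $1+\delta/2>1$, so the series $\sum_n (n\log\log n)^{-(1+\delta/2)}$ converges and hence $\sum_n \PP(|(R_n)_j|/\sqrt{2n\log\log n}>\epsilon)<\infty$. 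Borel--Cantelli then yields $\limsup_n |(R_n)_j|/\sqrt{2n\log\log n}\le\epsilon$ w.p.~1; letting $\epsilon\downarrow 0$ along a countable sequence gives $(R_n)_j/\sqrt{2n\log\log n}\to 0$, w.p.~1, as required.

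Combining the two displays completes the argument. The only genuine obstacle here is the \emph{almost sure} (rather than merely $L_{2+\delta}$) negligibility of the remainder at the LIL scale, and it is precisely at this point that the strictly-greater-than-two moment of the score from Assumption \ref{ass:diff}(iii) is indispensable: a bare second-moment control would leave the borderline series $\sum_n (n\log\log n)^{-1}$, which diverges, and Borel--Cantelli -- hence the transfer of (\ref{eq:maLIL}) from $M_{n,j}$ to the score -- would break down. Since the martingale LIL of \cite{stou:70} and the bound on $R_n$ are both already established, no further analytic estimate is needed.
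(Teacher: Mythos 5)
Your proof is correct and takes essentially the same route as the paper's: both start from the decomposition in Proposition \ref{prop:all}(ii), kill the remainder $(R_n)_j$ almost surely via Markov's inequality with the $(2+\delta)$-moment followed by Borel--Cantelli (this is exactly where $\delta>0$ is needed, as you note), and then invoke the martingale LIL (\ref{eq:maLIL}). The only cosmetic difference is that the paper normalises the remainder by $\sqrt{n}$ instead of $\sqrt{2n\log\log n}$, proving the marginally stronger statement $R_{n,j}/\sqrt{n}\to 0$ w.p.~1, which is immaterial to the conclusion.
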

\begin{proof}
From Proposition \ref{prop:all}(ii),  using Markov inequality, we have 
for $1\le j\le d$ and any $\epsilon>0$,
\begin{align*}
\mathbb{P}\,[\,| R_{n,j}|\geq\epsilon\, \sqrt{n}\,] = 
\mathbb{P}\,[\,| R_{n,j}|^{2+\delta}\geq\epsilon^{2+\delta}\, n^{1+\delta/2}\,]   & 
\leq\frac{\mathbb{E}\,| R_{n,j}|^{2+\delta}}{\epsilon^{2+\delta}\,n^{1+\delta/2}}. 
\end{align*}
Thus,
\begin{align*}
\sum_{n=0}^{\infty} \mathbb{P}\,[\,| R_{n,j}|\geq\epsilon\, \sqrt{n}\,] <\infty,
\end{align*}
and the Borel-Cantelli lemma gives that 
\begin{align*}
\mathbb{P}\,[\,|R_{n,j}|\geq\epsilon\, \sqrt{n},\,\,\textrm{infinitely often in } n\,] = 0.
\end{align*}
Equivalently, w.p.~1, $| R_{n,j}|/\sqrt{n}\rightarrow 0$. 
The proof is completed via the martingale LIL in (\ref{eq:maLIL}).
\end{proof}


\section{Model Selection Criteria for HMMs}
\label{sec:back}

We provide a brief illustration for the derivation of AIC and BIC, with focus on HMMs.
Results obtained that explicitly connect BIC and the evidence will allow for deriving
consistency properties for the evidence directly after studying the BIC criterion later in the paper.

\subsection{BIC and Evidence for HMMs}

We consider the derivation of BIC for a general
HMM.  BIC is used by~\cite{schw:78} and
 can be obtained by applying a Laplace approximation
at the  calculation of
the marginal likelihood (or evidence) of the model under consideration. Consideration of the sequence 
of log-likelihood functions over the data size $n$ (see e.g.~\cite{kass:90} for the concept of `Laplace-regular' models) provide 
analytical, sufficient conditions for controlling the difference between 
the evidence and BIC. 
%
We briefly review the Taylor expansions 
underlying the derivation of BIC and provide the regularity conditions 
that control its difference from the evidence in the context of HMMs.
Compared with \cite{kass:90}, weaker conditions are required here,
as BIC derives from an $\mathcal{O}(n^{-1})$ approximation, in an a.s.~sense, of the evidence (rather than $\mathcal{O}(n^{-2})$ expansions looked at in the Laplace-regular framework).

Let $\pi(\theta)d\theta$ be a prior for parameter $\theta$ -- for simplicity 
we assume that $d\theta$ is the Lebesgue measure on $\mathbb{R}^{d}$. 
The evidence is given by
\begin{align}
\label{eq:evidence}
p(y_{0:n-1}) & =\int_{\Theta}\pi(\theta)
\exp\big\{\ell_\theta(y_{0:n-1})\big\}d\theta.
\end{align}
%
%
%
We define 
\begin{equation*}
\mathsf{J}(y_{0:n-1}):= -\tfrac{\nabla \nabla^{\top} 
\ell_{\hat{\theta}_n}(y_{0:n-1})}{n}.
\end{equation*}
We will be explicit on regularity conditions in the statement of the 
proposition that follows. Following similar steps as in \cite{kass:90}, we apply a fourth-order Taylor expansion around the MLE $\hat{\theta}_{n}$
that gives -- for $u:=\sqrt{n}(\theta-\hat{\theta}_n)$,
\begin{align}
\ell_{\theta}(y_{0:n-1}) & =\ell_{\hat{\theta}_{n}}(y_{0:n-1})-\tfrac{1}{2}\,u^{\top}\mathsf{J}(y_{0:n-1})\,u
\nonumber \\ &\qquad \qquad +
\tfrac{1}{6}\,n^{-1/2} \sum_{i,j,k=1}^{d}
u_i u_j u_k \, \tfrac{\partial_{\theta_i}\partial_{\theta_j}\partial_{\theta_k}\ell_{\hat{\theta}_n}(y_{0:n-1})}{n}  + R_{1,n},
\label{eq:exps}
\end{align}
for residual term $R_{1,n}$ (in the integral form expansion)  involving fourth-order derivatives
of $\theta\mapsto\ell_{\theta}(y_{0:n-1})/n$ evaluated at $$\xi=a \hat{\theta}_n + (1-a)\theta,$$ for some $a\in[0,1]$, fourth order polynomials of $u$,
and a factor of $n^{-1}$, see e.g.~Ch.14 of \cite{lang:2012} for details on such expansions. Notice 
we have used $\nabla \ell_{\hat{\theta}_{n}}(y_{0:n-1})=0$.
For the prior density we have 
\begin{align*}
\pi(\theta)   = \pi(\hat{\theta}_{n})+n^{-1/2}\,\nabla^{\top} \pi(\hat{\theta}_{n})\,u  + R_{2,n},
\end{align*}
for the integral residual term  $R_{2,n}$ with second-order derivatives
of $\pi(\theta)$, second-order polynomial of $u$ and a factor of $n^{-1}$.
%
Using a second order expansion for $x\mapsto e^{x}$, 
only for the terms beyond the quadratic in $u$ in (\ref{eq:exps}), we get 
\begin{align}
\frac{p(y_{0:n-1})}{p_{\hat{\theta}_{n}}(y_{0:n-1})} =
&\int_{\Theta}e^{-\frac{1}{2}\,u^{\top}\mathsf{J}(y_{0:n-1})\,u} \times \nonumber \\[-0.1cm] &\qquad \quad \{\pi(\hat{\theta}_{n})+n^{-1/2}\,m(u,y_{0:n-1}) + R_n \}\,d\theta,
\label{eq:taylor}
\end{align}
where we have separated the term (later on removed as having zero mean under a Gaussian integrator) 
\begin{align*}
m(u,y_{0:n-1}) = \tfrac{1}{6}\,n^{-1/2} \sum_{i,j,k=1}^{d}
u_i u_j u_k \, \tfrac{\partial_{\theta_i}\partial_{\theta_j}\partial_{\theta_k}\ell_{\hat{\theta}_n}(y_{0:n-1})}{n} + \nabla^{\top} \pi(\hat{\theta}_{n})\,u;
\end{align*}
 the residual term $R_n$ can be deduced from the calculations. 
 
\begin{rem} 
\label{rem:Laplace}
 The Laplace-regular setting of \cite{kass:90} 
 provides concrete conditions for the above derivations to be valid and for controlling 
 the deduced residual terms. Apart from the standard assumptions on the existence 
 of derivatives and a bound on the fourth order derivatives of $\ell_{\theta}(y_{0:n-1})$ close to 
 $\theta_*$ -- the latter being defined in Proposition \ref{prop:uniform} as the limit of $\hat{\theta}_n$ -- the following are also required:
\begin{itemize}
\item[(i)] 
For any $\delta>0$, w.p.~1,
	\begin{align*}
	\limsup_{n}\sup_{\theta\in\Theta - \mathcal{B}_{\delta}(\theta_*)} \big\{\tfrac{1}{n}\,\big(\ell_{\theta}(y_{0:n-1})-\ell_{\theta_*}(y_{0:n-1})\big)\big\} <0;
	\end{align*}
	\item[(ii)] For some $\epsilon>0$, $\mathcal{B}_{\epsilon}(\theta^*)\subseteq \Theta$, and w.p.~1,
	\begin{align*}
\limsup_n\sup_{\theta\in\mathcal{B}_{\epsilon}(\theta_*)}
	\big\{\,\mathrm{det}\big(-\nabla \nabla^{\top}\ell_{\theta}(y_{0:n-1})/n\big)\big\} & >0.
	\end{align*}
\end{itemize}
 
\noindent Note  that (i) is implied by Proposition \ref{prop:uniform} and identifiability 
Assumption~\ref{ass:six}. Also, Proposition \ref{prop:det} and Assumption \ref{ass:non} imply (ii). 
\end{rem} 
 
\noindent 
Here, $\mathrm{det}(\cdot)$ denotes the determinant of a square matrix.
Following the above remark,  the Laplace-regular setting of \cite{kass:90} translates into 
the following assumption and proposition.
\begin{assumption}
\label{ass:Laplace}
\begin{itemize}
	\item[(i)] 
W.p.~1, $\theta\mapsto q_{\theta}(x'| x)$ and $\theta\mapsto g_{\theta}(y| x)$
are four-times continuously differentiable for $x,x'\in\mathsf{X}$,
$y\in\mathsf{Y}$; the prior $\theta\mapsto \pi(\theta)$
	 is two-times continuously differentiable.
		
\item[(ii)]  For some $\epsilon>0$, $\mathcal{B}_{\epsilon}(\theta_*)\subseteq \Theta$ and w.p.~1, for all $0\leq j_{1}\leq \cdots \leq  j_{k}\leq d$, $k\leq 4$
	\begin{align*}
	\limsup_{n}\sup_{\theta\in\mathcal{B}_{\epsilon}(\theta_*)}
	\big\{ 
	\tfrac{1}{n}\big|
	\partial_{\theta_{j_{1}}}\cdots\,\partial_{\theta_{j_{k}}}\ell_{\theta}(y_{0:n-1})
	\big|
	\big\}  <\infty.
	\end{align*}
	%
	%
			
	%
	%
	%
\end{itemize}
\end{assumption}

\begin{prop}
\label{prop:laplace}
Under Assumptions \ref{ass:1}-\ref{ass:diff}, \ref{ass:non}-\ref{ass:Laplace}, we have that, w.p.~1, 
\begin{align*}
\frac{p(y_{0:n-1})}{p_{\hat{\theta}_{n}}(y_{0:n-1})} = 
(2\pi)^{d/2}\,n^{-d/2}\,\{\mathrm{det}(\mathsf{J}(y_{0:n-1})\}^{-1/2}
\pi(\hat{\theta}_n)
\,(1  + \mathcal{O}(n^{-1})).
\end{align*}
\end{prop}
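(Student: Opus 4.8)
The plan is to start directly from the fourth-order Taylor expansion already set up in equation (\ref{eq:taylor}), and to evaluate the Gaussian-type integral on its right-hand side term by term, tracking the order of each contribution in $n$. After performing the substitution $u = \sqrt{n}(\theta - \hat{\theta}_n)$, the Lebesgue measure transforms as $d\theta = n^{-d/2}\,du$, which is exactly where the $n^{-d/2}$ factor in the claimed expression originates. The leading contribution is then
\begin{align*}
n^{-d/2}\,\pi(\hat{\theta}_n)\int_{\RR^d} e^{-\frac{1}{2}u^{\top}\mathsf{J}(y_{0:n-1})\,u}\,du
= n^{-d/2}\,\pi(\hat{\theta}_n)\,(2\pi)^{d/2}\,\{\mathrm{det}(\mathsf{J}(y_{0:n-1}))\}^{-1/2},
\end{align*}
using the standard multivariate Gaussian normalising constant, which is valid once $\mathsf{J}(y_{0:n-1})$ is positive definite; Proposition \ref{prop:det} together with Assumption \ref{ass:non} guarantees this for all large enough $n$, w.p.~1, since $\mathsf{J}(y_{0:n-1}) \to \mathcal{J}_{\theta_*}$ and $\mathcal{J}_{\theta_*}$ is non-singular (and positive definite as a limiting information matrix).

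Next I would dispose of the remaining two terms inside the braces of (\ref{eq:taylor}). The term $n^{-1/2}\,m(u,y_{0:n-1})$ integrates against the Gaussian weight $e^{-\frac{1}{2}u^{\top}\mathsf{J}\,u}$; since $m$ is a sum of a cubic polynomial in $u$ and a linear polynomial in $u$, and odd-degree monomials have zero mean under a centred Gaussian, this entire term integrates to zero. The one subtlety is that the cubic piece carries its own explicit factor $n^{-1/2}$ (so the whole term is $\mathcal{O}(n^{-1})$ before integration), but even without invoking that, the vanishing-by-symmetry argument removes it exactly. The residual term $R_n$ must be bounded: using Assumption \ref{ass:Laplace}(ii), the normalised derivatives up to fourth order are $\limsup_n$-bounded w.p.~1 on $\mathcal{B}_{\epsilon}(\theta_*)$, so $R_n$ carries an overall factor $n^{-1}$ times polynomials in $u$ with a.s.-bounded coefficients; integrating these polynomial tails against the Gaussian weight yields finite Gaussian moments, and the whole contribution is $\mathcal{O}(n^{-1})$ relative to the leading term.

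The main obstacle will be controlling the integral over the full parameter region rather than just the local neighbourhood $\mathcal{B}_{\epsilon}(\theta_*)$ where the Taylor expansion is valid. The expansion (\ref{eq:exps}) and the derivative bounds hold only on $\mathcal{B}_{\epsilon}(\theta_*)$, so one must argue that the contribution to the evidence from $\Theta \setminus \mathcal{B}_{\epsilon}(\theta_*)$ is asymptotically negligible. This is precisely where condition (i) of Remark \ref{rem:Laplace} enters: it gives, w.p.~1, that $\limsup_n \sup_{\theta \in \Theta - \mathcal{B}_{\delta}(\theta_*)} \tfrac{1}{n}(\ell_\theta - \ell_{\theta_*}) < 0$, so that $\exp\{\ell_\theta - \ell_{\hat{\theta}_n}\}$ decays exponentially in $n$ uniformly off the neighbourhood, and the corresponding integral is $o(n^{-d/2-1})$ — far below the error we are tracking. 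Assembling the local Gaussian evaluation, the exact cancellation of the odd term, the $\mathcal{O}(n^{-1})$ residual bound, and the exponentially small tail contribution yields the stated expansion; the remaining bookkeeping is to confirm that the multiplicative error aggregates to $1 + \mathcal{O}(n^{-1})$, which follows by factoring the leading term out of the sum of all contributions.
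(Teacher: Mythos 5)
Your proposal is correct and follows essentially the same route as the paper's proof: an exponentially small far-field bound via Remark \ref{rem:Laplace}(i), the change of variables $u=\sqrt{n}(\theta-\hat{\theta}_n)$ producing the $(2\pi)^{d/2}\,n^{-d/2}\,\{\mathrm{det}(\mathsf{J}(y_{0:n-1}))\}^{-1/2}$ factor, cancellation of the odd-degree term $m(u,y_{0:n-1})$ by Gaussian symmetry after extending the integral to $\mathbb{R}^{d}$ at exponentially small cost, and an $\mathcal{O}(n^{-1})$ residual controlled by Assumption \ref{ass:Laplace}(ii) and Remark \ref{rem:Laplace}(ii). The only extra care the paper takes is the nesting $\mathcal{B}_{\delta}(\theta_*)\subseteq\mathcal{B}_{\gamma}(\hat{\theta}_n)\subseteq\mathcal{B}_{\min\{\epsilon_1,\epsilon_2\}}(\theta_*)$ for large $n$, so that the local integration domain is centred at $\hat{\theta}_n$ (hence symmetric in $u$) while the derivative bounds and the tail bound, both stated around $\theta_*$, still apply -- a bookkeeping point your argument absorbs into the extension-to-$\mathbb{R}^{d}$ step.
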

\begin{proof}
Under the assumptions, Theorem 2.1 of \cite{tadi:18} ensures that the the log-likelihood $\theta\mapsto\ell_{\theta}(y_{0:n-1})$ is four-times
continuously differentiable. Then, recall from Proposition \ref{prop:uniform} 
that $\theta\mapsto \ell_{\theta}(y_{0:n-1})/n$ converges uniformly to 
the continuous function $\theta\mapsto\ell(\theta)$ defined therein, which implies that $\hat{\theta}_n\rightarrow \theta_*$,
 with $\theta_*$ the unique maximiser of $\ell(\cdot)$ (under Assumption \ref{ass:six}) -- all these statements hold w.p.~1.
We choose sufficiently small $\delta>0$ (in Remark \ref{rem:Laplace}(i)), then $\epsilon=\epsilon_1$ and 
$\epsilon=\epsilon_2$ in Assumption \ref{ass:Laplace}(ii)
and Remark \ref{rem:Laplace}(ii) respectively, 
and  $\gamma>0$ such that for  
large enough $n$, $\mathcal{B}_{\delta}(\theta_*)\subseteq \mathcal{B}_{\gamma}(\hat{\theta}_n)\subseteq 
\mathcal{B}_{\min\{\epsilon_1,\epsilon_2\}}(\theta_*)
$.
We have that 
\begin{align*}
\frac{p(y_{0:n-1})}{p_{\hat{\theta}_{n}}(y_{0:n-1})} &= 
\int_{\Theta-\mathcal{B}_{\gamma}(\hat{\theta}_n)}\pi(\theta)\,
e^{n\times \frac{1}{n}\{\ell_\theta(y_{0:n-1})-\ell_{\hat{\theta}_n}(y_{0:n-1})\}}d\theta \\ & \qquad \qquad + \int_{\mathcal{B}_{\gamma}(\hat{\theta}_n)}\pi(\theta)\,
e^{\ell_\theta(y_{0:n-1})-\ell_{\hat{\theta}_n}(y_{0:n-1})}d\theta\\
&\leq e^{-c\,n} + \int_{\mathcal{B}_{\gamma}(\hat{\theta}_n)}\pi(\theta)\,
e^{\ell_\theta(y_{0:n-1})-\ell_{\hat{\theta}_n}(y_{0:n-1})}d\theta,
\end{align*}	 
for some $c>0$,
where we used Remark \ref{rem:Laplace}(i) to obtain the inequality.
It remains to treat the integral on $\mathcal{B}_{\gamma}(\hat{\theta}_n)$.
Applying the Taylor expansions as described in the main text
and continuing from (\ref{eq:taylor}) -- with the domain of integration 
now being $\mathcal{B}_{\gamma}(\hat{\theta}_n)$ -- will give,
\begin{align}
&\mathcal{I}_n:=\int_{\mathcal{B}_{\gamma}(\hat{\theta}_n)}\pi(\theta)\,
e^{\ell_\theta(y_{0:n-1})-\ell_{\hat{\theta}_n}(y_{0:n-1})}d\theta \nonumber
\\ &= 
 \int_{\mathcal{B}_{\gamma}(\hat{\theta}_n)}e^{-\frac{1}{2}\,u^{\top}\mathsf{J}(y_{0:n-1})\,u} \{\pi(\hat{\theta}_{n})+n^{-1/2}\,m(u,y_{0:n-1}) + R_n \}\,du.
 \label{eq:expa}
\end{align}
A careful, but otherwise straightforward,  consideration of the structure of the residual $R_n$ gives that,
under Remark \ref{rem:Laplace}(ii) and Assumption \ref{ass:Laplace}(ii), 
\begin{align*}
& \tfrac{1}{(2\pi)^{d/2}\{\mathrm{det}(\mathsf{J}(y_{0:n-1}))\}^{-1/2}}\int_{\mathcal{B}_{\gamma}(\hat{\theta}_n)}e^{-\frac{1}{2}\,u^{\top}\mathsf{J}(y_{0:n-1})\,u}\,|R_n|\,d\theta  = \mathcal{O}(n^{-1}).
\end{align*}
Thus, continuing from (\ref{eq:expa}), the change 
of variables $u=\sqrt{n}(\theta-\hat{\theta}_n)$ implies that, 
for $f(\cdot;\Omega)$ denoting the pdf 
of a centred $d$-dimensional Gaussian distribution with precision matrix $\Omega$,
\begin{align*}
& \mathcal{I}_n  = 
(2\pi)^{d/2}\,n^{-d/2}\,\{\mathrm{det}(\mathsf{J}(y_{0:n-1})\}^{-1/2} \\ &\qquad \qquad \times \int_{\mathcal{B}_{\gamma\sqrt{n}}(0)}
f(u;\mathsf{J}(y_{0:n-1})) \{\pi(\hat{\theta}_{n})+n^{-1/2}\,m(u,y_{0:n-1}) \}\,du \\  &\qquad \qquad \qquad \qquad \times(1+ \mathcal{O}(n^{-1}))
\end{align*}
The final result follows from the fact, that using Assumption \ref{ass:Laplace}(i), the integral appearing above is $\mathcal{O}(e^{-c'n})$ apart from 
the same integral over the whole $\mathbb{R}^{d}$, for some 
constant $c'>0$.
\end{proof}
Proposition \ref{prop:laplace} implies that, w.p.~1, 
\begin{align*}
\log p(y_{0:n-1})  = \ell_{\hat{\theta}_n}(y_{0:n-1}) -  
\tfrac{d}{2}\,\log n + \mathcal{O}(1) + \mathcal{O}(n^{-1}). 
\end{align*}
%
%
%
%
Ignoring the terms which are $\mathcal{O}(1)$ w.r.t.~$n$, we obtain that 
\begin{equation*}
2\log p(y_{0:n-1})\approx 2\ell_{\hat{\theta}_{n}}(y_{0:n-1})-d\log n.
\end{equation*}
%
%
Thus, working with the Laplace approximation to the evidence, one can derive the
standard formulation of the BIC,
\begin{align}
\mathrm{BIC} =-2\ell_{\hat{\theta}_{n}}(y_{0:n-1})+d\log n.\label{eq:BIC}
\end{align}
%


%
\begin{rem}
The above results 
provide an interesting conceptual reassurance.
Admitting the evidence as the core principle under which model comparison is carried out,  
if amongst a family of parametric HMM models, w.p.~1 one has the largest evidence for any big enough $n$, 
then BIC is guaranteed to eventually select that model as the optimal one.
\end{rem}

\begin{rem}
There is considerable work in the literature regarding consistency properties of the evidence (or Bayes Factor) for classes of models beyond the i.i.d.~setting, see e.g.~\cite{chat:18} and the references therein. In our approach, we have brought together results in the literature to deliver assumptions 
that -- whilst being fairly general -- were produced with HMMs in mind (and the connection between AIC and the evidence) and are relatively straightforward to be verified, indeed, for HMMs. Alternative approaches typically provide higher 
level conditions (see e.g.~above reference) in an attempt to preserve generality.  
\end{rem}
%
%

\subsection{AIC for HMMs}
AIC is developed in~\cite{akai:74} with its derivation discussed for i.i.d.~data  and Gaussian models of ARMA type.
Following more recent expositions (see e.g.~\cite{clae:08}),
AIC is based on the use of the Kullback-Leibler (KL) divergence 
for quantifying the distance between the true data-generating 
distribution and the probability model; an effort to reduce the bias of a `naive' estimator of the KL divergence leads to the formula for AIC. The case that one does not assume that the parametric model contains the true data distribution 
corresponds to a generalised version of AIC often called the Takeuchi Information Criterion (TIC), first proposed in~\cite{take:76}.
The above ideas are easy to be demonstrated in   simple settings (e.g.~\cite{clae:08} consider i.i.d.~and linear regression models). 

The framework connecting KL with AIC, in  the context of HMMs, can be developed as follows.
%
%
Let $v(dz_{0:n-1})$ denote the true data-generating distribution, $n\ge 1$.
A model is suggested in the form of a family of distributions $\{p_\theta(dz_{0:n-1});\,\theta\in\Theta \}$.
We assume that $v(dz_{0:n-1})$, $p_\theta(dz_{0:n-1})$ admit densities $v(z_{0:n-1})$, $p_\theta(z_{0:n-1})$ w.r.t.~$\nu^{\otimes n}$, $n\ge 1$.
We work with  the KL distance,
\begin{align}
\mathrm{KL}_n(\theta):&= \tfrac{1}{n} \int v(dz_{0:n-1})\log\frac{v(z_{0:n-1})}{p_{{\theta}}
	(z_{0:n-1})}  
\nonumber  \\
& = \tfrac{1}{n}   \int v(dz_{0:n-1})\log v(z_{0:n-1})-  \tfrac{1}{n}\int v(dz_{0:n-1})\log p_{{\theta}}(z_{0:n-1}). \label{eq:KL}
\end{align}
Therefore, minimising the above  discrepancy is equivalent to maximising
\begin{equation*}
\mathcal{R}_{n}(\theta) := \tfrac{1}{n}\int v(dz_{0:n-1})\log p_{{\theta}}(z_{0:n-1}).
\end{equation*}
Following standard ideas from cases models (e.g.~i.i.d.~models), one 
is interested in the quantity $\mathcal{R}_{n}(\hat{\theta}_n)$, but, in practice, 
has access only to the naive 
estimator $\frac{1}{n}\ell_{\hat{\theta}_{n}}(y_{0:n-1})$, the latter tending 
to have positive bias versus $\mathcal{R}_{n}(\hat{\theta}_n)$ due to the use of 
both the data and the data-induced MLE in its expression. AIC is then 
derived by finding the larger order term (of size $\mathcal{O}(1/n)$) in the discrepancy of the expectation and appropriately adjusting the naive estimator.

%
%
\begin{assumption}
\label{ass:AIC}
\begin{itemize}
\item[(i)] There exists a constant $C>0$, such that w.p.~1, 
\begin{equation*}
\sup_{n\ge 1} \sup_{\theta\in\Theta} \Big\{  \tfrac{1}{n} \big|\nabla \nabla^{\top} \ell_{\theta}(y_{0:n-1})    \big|   \Big\}   < C  .
\end{equation*}
\item[(ii)] There is some $n_0\ge 1$ such that w.p.~1, matrix $J_{\theta_*}^{-1}(y_{0:n-1})$  
-- defined in Proposition \ref{prop:matrix} --
is well-posed for all $n\ge n_0$, and there is a constant $C'>0$, such that w.p.~1,
\begin{equation*}
\sup_{n\ge n_0}|J_{\theta_*}^{-1}(y_{0:n-1})| < C'.
\end{equation*}
\end{itemize}
\end{assumption}
\noindent These are high-level assumptions -- especially Assumption \ref{ass:AIC}(ii) -- and a more analytical study is required for them to be of immediate practical use (or weakening them); but such a study would considerably deviate from the main purposes of this work. Our contribution is contained in the following proposition.%
\begin{prop}
\label{prop:AIC}Under Assumptions \ref{ass:1}-\ref{ass:non}, \ref{ass:AIC},
we have that 
\begin{align*}
\mathbb{E}\,\big[\, \tfrac{1}{n}\ell_{\hat{\theta}_{n}}(y_{0:n-1}) -  \mathcal{R}_{n}(\hat{\theta}_n)\,\big] 
 =  \tfrac{d}{n}  + o(n^{-1}). 
\end{align*}
\end{prop}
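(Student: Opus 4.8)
The plan is to center and un-normalise the bias: writing $g_n(\theta):=\EE[\ell_\theta(y_{0:n-1})] = n\,\mathcal{R}_n(\theta)$ for the deterministic expected log-likelihood, the claim is equivalent to $\EE[\ell_{\hat{\theta}_n}(y_{0:n-1}) - g_n(\hat{\theta}_n)] = d + o(1)$. I would insert $\ell_{\theta_*}(y_{0:n-1})$ and $g_n(\theta_*)$ and split the bias as
\begin{align*}
\ell_{\hat{\theta}_n}(y_{0:n-1}) - g_n(\hat{\theta}_n) &= \underbrace{[\ell_{\hat{\theta}_n}(y_{0:n-1}) - \ell_{\theta_*}(y_{0:n-1})]}_{A_n} + \underbrace{[\ell_{\theta_*}(y_{0:n-1}) - g_n(\theta_*)]}_{B_n} \\ &\quad + \underbrace{[g_n(\theta_*) - g_n(\hat{\theta}_n)]}_{C_n}.
\end{align*}
The middle term is immediate, $\EE[B_n] = \EE[\ell_{\theta_*}(y_{0:n-1})] - g_n(\theta_*) = 0$ by definition of $g_n$, and I expect each of the two outer terms to contribute $d/2$ in the limit.

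For $A_n$ I would invoke Proposition \ref{prop:all}(i), which for large $n$ gives the exact identity $A_n = \tfrac12 S_n^\top J_{\theta_*}^{-1}(y_{0:n-1}) S_n$ with $S_n := \nabla\ell_{\theta_*}(y_{0:n-1})/\sqrt{n}$. By Proposition \ref{prop:all}(iii), $S_n\Rightarrow N_d(0,\mathcal{J}_{\theta_*})$, and by Proposition \ref{prop:matrix}, $J_{\theta_*}^{-1}(y_{0:n-1})\to\mathcal{J}_{\theta_*}^{-1}$ w.p.~1; hence $A_n\Rightarrow \tfrac12 Z^\top\mathcal{J}_{\theta_*}^{-1}Z$ with $Z\sim N_d(0,\mathcal{J}_{\theta_*})$, whose mean is $\tfrac12\,\mathrm{tr}(\mathcal{J}_{\theta_*}^{-1}\mathcal{J}_{\theta_*}) = d/2$.

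For $C_n$ I would Taylor-expand $g_n$ about $\theta_*$ in integral-remainder form, obtaining $g_n(\hat{\theta}_n) - g_n(\theta_*) = \nabla^\top g_n(\theta_*)(\hat{\theta}_n - \theta_*) + \tfrac12(\hat{\theta}_n-\theta_*)^\top \tilde{H}_n(\hat{\theta}_n-\theta_*)$, where $\tilde{H}_n$ is an averaged Hessian of $g_n$ along the segment joining $\theta_*$ and $\hat{\theta}_n$. Differentiation under the integral sign is legitimate by the dominated-derivative bounds of Assumption \ref{ass:diff}, so $\nabla g_n(\theta_*) = \EE[\nabla\ell_{\theta_*}(y_{0:n-1})]$; under model correctness (Assumption \ref{ass:cor}) the true law coincides with $p_{\theta_*}$, whence the score identity $\EE[\nabla\ell_{\theta_*}(y_{0:n-1})] = \int \nabla p_{\theta_*}\,d\nu^{\otimes n} = \nabla 1 = 0$ kills the first-order term exactly. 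Since $\nabla\nabla^\top g_n(\theta) = \EE[\nabla\nabla^\top\ell_\theta(y_{0:n-1})]$, Proposition \ref{prop:det} combined with dominated convergence (using Assumption \ref{ass:AIC}(i)) gives $-\tfrac1n\tilde{H}_n\to\mathcal{J}_{\theta_*}$; writing $\sqrt{n}(\hat{\theta}_n-\theta_*) = J_{\theta_*}^{-1}(y_{0:n-1})S_n$ from (\ref{eq:22}), I get $C_n = \tfrac12 S_n^\top J_{\theta_*}^{-1}(-\tfrac1n\tilde{H}_n)J_{\theta_*}^{-1}S_n \Rightarrow \tfrac12 Z^\top\mathcal{J}_{\theta_*}^{-1}Z$, again of mean $d/2$.

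The main obstacle is passing from these distributional limits to convergence of the expectations $\EE[A_n]$ and $\EE[C_n]$, and this is precisely where Assumption \ref{ass:AIC} enters. Both $A_n$ and $C_n$ are quadratic forms in $S_n$ whose coefficient matrices are controlled by $|J_{\theta_*}^{-1}(y_{0:n-1})|$, bounded uniformly in $n$ by Assumption \ref{ass:AIC}(ii), and by $\sup_n \tfrac1n|\nabla\nabla^\top\ell_\theta(y_{0:n-1})|$, bounded by Assumption \ref{ass:AIC}(i). Together with the uniform $(2+\delta)$-moment bound on $S_n$ implied by Proposition \ref{prop:all}(ii) (via a martingale moment inequality applied to the stationary increments $h_{\theta_*}(y_{-\infty:i})$), this bounds $\{A_n\}$ and $\{C_n\}$ in $L_{1+\delta/2}$, hence establishes uniform integrability and upgrades the weak convergence to convergence of means. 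A minor technical point is that Proposition \ref{prop:all}(i) and the invertibility of $J_{\theta_*}(y_{0:n-1})$ hold only for $n$ beyond a random threshold; the contribution of the exceptional events is asymptotically negligible by the same moment bounds. Collecting the pieces yields $\EE[\ell_{\hat{\theta}_n}(y_{0:n-1}) - g_n(\hat{\theta}_n)] = d/2 + 0 + d/2 + o(1) = d + o(1)$, and dividing by $n$ gives the stated $\tfrac{d}{n} + o(n^{-1})$.
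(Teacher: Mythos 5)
Your proposal is correct, but it takes a genuinely different route from the paper. The paper Taylor-expands the difference $\tfrac{1}{n}\ell_{\hat{\theta}_{n}}(y_{0:n-1})-\mathcal{R}_{n}(\hat{\theta}_n)$ \emph{jointly} about $\theta_*$ (equation (\ref{eq:2T})): the zero-mean fluctuation and the vanishing score integral appear there just as in your $B_n$ and in the first-order term of your $C_n$, but the remaining contribution is organised as a single cross term $\zeta_n=\tfrac{1}{n}\,\tfrac{\nabla^{\top}\ell_{\theta_*}(y_{0:n-1})}{\sqrt{n}}\sqrt{n}(\hat{\theta}_n-\theta_*)$, whose expectation (after multiplying by $n$) converges to the full $d$, plus a quadratic term $\zeta_n'$ whose kernel $\mathcal{E}_{\theta_*}(y_{0:n-1},z_{0:n-1})$ is the \emph{difference} of two averaged Hessians -- one built from the observed data, one sitting under the $z$-integral; since both converge a.s.\ to the same limit by Proposition \ref{prop:det}, this term contributes $0$. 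So the paper's split is ``$d+0$'' where yours is the classical optimism split ``$d/2+0+d/2$'': your $A_n$ captures the overfitting gain on the observed data, your $C_n$ the expected loss from plugging in $\hat{\theta}_n$. The trade-off is this: by pairing the two Hessians inside one integral, the paper never needs to differentiate the deterministic function $g_n(\theta)=\EE\,[\ell_\theta(y_{0:n-1})]$; your route requires interchanging $\nabla\nabla^{\top}$ with the expectation and establishing $-\tfrac{1}{n}\nabla\nabla^{\top}g_n\rightarrow\mathcal{J}_{\theta_*}$ near $\theta_*$, which you correctly justify via Assumption \ref{ass:AIC}(i) and dominated convergence from Proposition \ref{prop:det}, but it is an extra layer of justification the paper's organisation avoids (both routes still need the zero-mean-score identity under Assumption \ref{ass:cor}, which the paper simply asserts). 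On the uniform-integrability step your treatment is, if anything, slightly sharper: bounding the quadratic forms in $L_{1+\delta/2}$ via the $(2+\delta)$-moments of Proposition \ref{prop:all}(ii) genuinely yields uniform integrability, whereas the paper's Cauchy--Schwarz step as literally written (an $L_2$ bound on the score giving an ``$L_2$'' bound on $n\zeta_n$) only delivers an $L_1$ bound unless one reads it with the same $(2+\delta)$-moments in mind. Finally, both you and the paper gloss over the random threshold beyond which Proposition \ref{prop:all}(i) and equation (\ref{eq:22}) are valid in the same way, so nothing is lost there.
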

\begin{proof}
Use of a second-order Taylor expansion gives,
\begin{align}
&\tfrac{1}{n}\ell_{\hat{\theta}_{n}}(y_{0:n-1}) -  \mathcal{R}_{n}(\hat{\theta}_n) \nonumber \\ 
&=   \tfrac{1}{n}\ell_{\theta_{*}}(y_{0:n-1}) -  
\tfrac{1}{n}\int \ell_{{\theta}_*}(z_{0:n-1})v(dz_{0:n-1})
 \nonumber \\
& \quad   + \tfrac{1}{n}\tfrac{\nabla^{\top}\ell_{\theta_{*}}(y_{0:n-1})}{\sqrt{n}}\sqrt{n}(\hat{\theta}_{n}-\theta_{*}) -  \tfrac{1}{n}\Big\{\int v(dz_{0:n-1})\nabla^{\top}\ell_{\theta_{*}}(z_{0:n-1})\Big\}(\hat{\theta}_{n}-\theta_{*})  \nonumber\\
 & \quad +\tfrac{1}{2n}
\sqrt{n}(\hat{\theta}_{n}-\theta_{*})^{\top} 
\Big\{    
\int \mathcal{E}_{\theta_*}(y_{0:n-1},z_{0:n-1}) v(dz_{0:n-1}) 
\Big\}
\sqrt{n} (\hat{\theta}_{n}-\theta_{*}). \label{eq:2T}
\end{align}
where we have set
\begin{equation*}
\mathcal{E}_{\theta_*}(y_{0:n-1},z_{0:n-1}):= \int_{0}^{1}\tfrac{\nabla \nabla^{\top} 
\ell_{s\hat{\theta}_n+(1-s)\theta_*}(y_{0:n-1}) -\nabla \nabla^{\top} 
\ell_{s\hat{\theta}_n+(1-s)\theta_*}(z_{0:n-1})}{n}ds.
\end{equation*}
Taking expectations in (\ref{eq:2T}), notice that: i) the expectation of the first difference on
the right-hand-side is trivially $0$; ii) the integral appearing in the second difference is 
identically zero, since we are working under the correct model Assumption \ref{ass:cor}.
It remains to consider the expectation of the terms, 
\begin{gather}
\zeta_{n}:=\tfrac{1}{n}\tfrac{\nabla^{\top}\ell_{\theta_{*}}(y_{0:n-1})}{\sqrt{n}}\sqrt{n}(\hat{\theta}_{n}-\theta_{*});\nonumber  \\
\zeta_{n}':=\tfrac{1}{2n}
\sqrt{n}(\hat{\theta}_{n}-\theta_{*})^{\top} 
\Big\{    
\int \mathcal{E}_{\theta_*}(y_{0:n-1},z_{0:n-1}) v(dz_{0:n-1}) 
\Big\}
\sqrt{n} (\hat{\theta}_{n}-\theta_{*}). \label{eq:ante1}
\end{gather}
The first term rewrites as, using (\ref{eq:22}),
\begin{align*}
\zeta_n = \tfrac{1}{n}\times \tfrac{\nabla^{\top}\ell_{\theta_{*}}(y_{0:n-1})}{\sqrt{n}}
J_{\theta_*}^{-1}(y_{0:n-1})
 \tfrac{\nabla\ell_{\theta_{*}}(y_{0:n-1})}{\sqrt{n}}
\end{align*}
Thus, Proposition \ref{prop:all} gives that,
\begin{align*}
n \zeta_n \Rightarrow Z^{\top} \mathcal{J}^{-1}_{\theta_*}Z; \quad Z\sim N(0,\mathcal{J}_{\theta_*}).
\end{align*}
For weak convergence to imply convergence in expectation, we require uniform integrability.
Assumption \ref{ass:AIC}(ii) takes care of the difficult term $J_{\theta_*}^{-1}(y_{0:n-1})$. 
Then, Proposition \ref{prop:all}(iii) and the Marcinkiewicz--Zygmund inequality applied for martingales \citep{ibra:99}, give that 
\begin{align*}
\sup_{n} \| \tfrac{\nabla\ell_{\theta_{*}}(y_{0:n-1})}{\sqrt{n}} \|_2 < \infty.
\end{align*}
Thus, from Cauchy--Schwarz, we have 
\begin{align*}
\sup \| n \zeta_n \|_2 < \infty,  
\end{align*}
which implies uniform integrability for $\{n \zeta_n\}_{n}$. 
So, we have shown that,
\begin{equation}
\label{eq:AIC11}
\mathbb{E}\,[\,n\zeta_n\,]\,\rightarrow \mathbb{E}\,[\,Z^{\top} \mathcal{J}^{-1}_{\theta_*}Z\,] \equiv   d.
\end{equation}

We proceed  to term $\zeta_n'$ in (\ref{eq:ante1}).
%
Using again (\ref{eq:22}) and setting 
\begin{equation*}
A_{\theta_*}(y_{0:n-1}):= \nabla^{\top}\ell_{\theta_{*}}(y_{0:n-1})/\sqrt{n}\cdot J_{\theta_*}^{-1}(y_{0:n-1}), 
\end{equation*}
we have that,
\begin{align*}
2n \zeta'_n = 
 A_{\theta_*}(y_{0:n-1})
\Big\{\int \mathcal{E}_{\theta_*}(y_{0:n-1},z_{0:n-1})v(dz_{0:n-1})\Big\}  A_{\theta_*}^{\top}(y_{0:n-1}).
\end{align*}
Clearly, we can write,
\begin{align*}
\mathbb{E}\,&[\,2n \zeta'_n\,]  \\ &=  \int   \big\{A_{\theta_*}(y_{0:n-1})  \mathcal{E}_{\theta_*}(y_{0:n-1},z_{0:n-1})       A_{\theta_*}^{\top}(y_{0:n-1})\big\} (v\otimes v) (dy_{0:n-1},dz_{0:n-1}).
\end{align*}
From Proposition \ref{prop:det} we obtain that  $(v\otimes v) (dy_{0:n-1},dz_{0:n-1})$-a.s., 
we have that $\lim_n\mathcal{E}_{\theta_*}(y_{0:n-1},z_{0:n-1})\rightarrow \mathcal{J}_{\theta_*}-\mathcal{J}_{\theta_*} = 0$. 
This implies the weak convergence of $A_{\theta_*}(y_{0:n-1})  \mathcal{E}_{\theta_*}(y_{0:n-1},z_{0:n-1})       A_{\theta_*}^{\top}(y_{0:n-1})\Rightarrow 0$.
Assumption \ref{ass:AIC},  and arguments similar to the ones used for $\zeta_n$, 
imply uniform integrability for $\{n\zeta_n'\}$. We thus have $\mathbb{E}\,[\,n\zeta_n'\,]\rightarrow 0$.

This latter result together with (\ref{eq:AIC11}) complete the proof.
\end{proof}
\noindent Proposition \ref{prop:AIC} provides the underlying principle for use of the standard AIC,
\begin{align}
\mathrm{AIC}:= & -2\ell_{\hat{\theta}_{n}}(y_{0:n-1})+2d. \label{eq:-22}
\end{align}

\section{BIC, Evidence, AIC Consistency Properties}
\label{sec:hmm}

We will now use the results in Sections \ref{sec:asy}-\ref{sec:back} 
to examine the asymptotic properties of BIC, the evidence and AIC 
in the context of HMMs.
We define the notions of strong and weak consistency in model selection in a nested setting as
follows.

\begin{define}[Consistency of Model Selection Criterion]
\label{def:con}
Assume a sequence of nested parametric models 
\begin{equation*}
\mathcal{M}_{1}\subset \cdots \subset \mathcal{M}_{k}\subset \mathcal{M}_{k+1}\subset \cdots \subset \mathcal{M}_{p}, \quad 
\end{equation*}
for some fixed $p\ge 1$,
specified via a sequence of corresponding parameter spaces
$\Theta_1\subseteq \RR^{d_1}$, and $\Theta_{k+1}=\Theta_{k}\times \Delta \Theta_k$, 
$ \Delta \Theta_k\subseteq \RR^{d_{k+1}-d_k}$, $k\ge 1$, with $d_k < d_l$ for $k<l$.
Let $\mathcal{M}_{k^{*}}$, for some $k^*\ge 1$, be the smallest model containing the correct one -- the latter determined by the true parameter value $\theta_{*}^{k^*}(=:\theta_*)\in \Theta_{k^*}$.

Let $\mathcal{M}_{\hat{k}_{n}}$, for index $\hat{k}_{n}\ge 1$ 
based on data $\{y_0, \ldots, y_{n-1}\}\in \mathsf{Y}^{n}$, $n\ge 1$,
be the model selected via optimising
a Model Selection Criterion.
If it holds that $\lim_{n\rightarrow \infty}\hat{k}_{n}= k^{*}$, w.p.\,1,
then the Model Selection Criterion is called \emph{strongly consistent}. If it holds that $\lim_{n\rightarrow\infty}\hat{k}_{n}= k^{*}$, in probability, then the Model Selection Criterion
is called \emph{weakly consistent}.
\end{define}

\noindent We henceforth assume  that for each $1\le k\le p$,   $\mathcal{M}_k$ corresponds to a parametric HMM as defined 
in Section \ref{sec:asy}. The particular model under consideration will be implied by the corresponding parameter appearing in an expression or the superscript $k$ used in relevant functions; i.e., a quantity involving $\theta^{k}$ will refer to model $\mathcal{M}_{k}$.
E.g., $\theta_*^{k}\in \Theta^{k}$ is the a.s.~limit of the MLE, $\hat{\theta}_{n}^{k}$, for model $\mathcal{M}_{k}$, and such a limit has been shown to exist under Assumptions \ref{ass:1}-\ref{ass:six} for model $\mathcal{M}_{k}$.

\begin{assumption}
\label{ass:nested}
Assumptions \ref{ass:2}-\ref{ass:six} hold for each parametric 
model $\mathcal{M}_{k}$, for index $1\le k< k^*$; 
Assumptions \ref{ass:2}-\ref{ass:non} 
hold for each parametric 
model $\mathcal{M}_{k}$, for index $k^*\le k\le p$.
\end{assumption}
%
\begin{rem}
For a model $\mathcal{M}_{k}$ that contains $\mathcal{M}_{k^*}$ ($k>k^*$), for all of Assumptions \ref{ass:2}-\ref{ass:non} to hold, it is necessary  
that the parameterisation of the larger model $\mathcal{M}_{k}$ is such that non-identifiability issues 
are avoided. In a trivial example, for $\mathcal{M}_{k^*}$ corresponding to 
i.i.d.~data from $N(\theta_1,1)$, a larger model of the form $N(\theta_1,\exp(\theta_2))$
would satisfy Assumptions \ref{ass:2} and \ref{ass:non} (the main ones the relate to the shape, in the limit, of the log-likelihood and, consequently identifiability) -- one can check this -- whereas 
model $N(\theta_1+\theta_{2},1)$ would not. In practice, for a given application with nested models, one can most times easily deduce whether identifiability issues are taken care of, thus Assumptions \ref{ass:2}-\ref{ass:non} correspond to reasonable requirements over the larger models.  In general, only `atypical' parameterisations can produce non-identifyibility issues -- thus, also abnormal behavior of the log-likelihood function -- for the case of the larger model.
\end{rem}
\begin{prop}
\label{prop:diff}
Let $\lambda_{n}:=\ell_{\hat{\theta}_{n}^{k}}^{k}(y_{0:n-1})-\ell^{k^*}_{\hat{\theta}_n^{k^{*}}}(y_{0:n-1})$, for a $k\neq k^{*}$. 
Under Assumptions \ref{ass:2}-\ref{ass:non}, \ref{ass:nested} we have the following.
\begin{itemize}
\item[(i)]
If $\mathcal{M}_{k}\subset \mathcal{M}_{k^{*}}$, then $\lim_{n\rightarrow \infty} n^{-1}\lambda_{n}= \ell^{k}(\theta_{*}^{k})-\ell^{k^*}(\theta_{*})<0$, w.p.~1.
\item[(ii)] If $\mathcal{M}_{k}\supset \mathcal{M}_{k^{*}}$, then $\lambda_{n}\ge 0$ and  $\lambda_n = \mathcal{O}(\log\log n)$, w.p.~1.
\end{itemize}
\end{prop}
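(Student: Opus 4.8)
The plan is to treat the two cases separately, relying in both on the first- and second-order asymptotics of the log-likelihood already established for each model in the nest. For part (i), I would first pin down the a.s.~limit of the normalised maximised log-likelihoods. Applying Proposition \ref{prop:uniform}(i) to each of $\mathcal{M}_k$ and $\mathcal{M}_{k^*}$ gives uniform (in $\theta$) a.s.~convergence of $\tfrac{1}{n}\ell^{k}_\theta(y_{0:n-1})$ and $\tfrac{1}{n}\ell^{k^*}_\theta(y_{0:n-1})$ to the continuous limits $\ell^k(\cdot)$ and $\ell^{k^*}(\cdot)$; combined with $\hat\theta_n^k\to\theta_*^k$ and $\hat\theta_n^{k^*}\to\theta_*$ (Corollary \ref{cor:singleton} under Assumption \ref{ass:six} for each model) and continuity of the limits, this yields $\tfrac{1}{n}\lambda_n\to \ell^k(\theta_*^k)-\ell^{k^*}(\theta_*)$ w.p.~1. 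It then remains to show the limit is strictly negative. Since $\mathcal{M}_{k^*}$ is the smallest model containing the truth, $\ell^{k^*}(\theta_*)$ equals the maximal attainable value of the limiting log-likelihood, and the nonpositivity $\ell^k(\theta)\le \ell^{k^*}(\theta_*)$ for every $\theta\in\Theta_k$ follows from the Jensen/KL argument recorded after Assumption \ref{ass:cor}. Strictness is the only genuine point: because $\mathcal{M}_k\subset\mathcal{M}_{k^*}$ is strict and $\mathcal{M}_{k^*}$ is the smallest model containing the truth, no parameter of $\mathcal{M}_k$ reproduces the true law, so the relative-entropy rate separating them is strictly positive, giving $\ell^k(\theta_*^k)<\ell^{k^*}(\theta_*)$.

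For part (ii), I would first obtain $\lambda_n\ge 0$ directly from the nesting. Since $\Theta_{k}\supseteq\Theta_{k^*}$, the embedding of any $\theta^{k^*}\in\Theta_{k^*}$ into the larger parameter space carries the same likelihood, so $\ell^{k}_{\hat\theta_n^{k^*}}(y_{0:n-1})=\ell^{k^*}_{\hat\theta_n^{k^*}}(y_{0:n-1})$; as $\hat\theta_n^{k}$ maximises $\ell^k_\theta$ over the larger space, $\ell^k_{\hat\theta_n^k}(y_{0:n-1})$ dominates $\ell^k$ evaluated at that embedding, whence $\lambda_n\ge 0$.

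For the $\mathcal{O}(\log\log n)$ bound I would exploit that both models contain the truth. By identifiability (Assumption \ref{ass:six} for $\mathcal{M}_k$, guaranteed through Assumption \ref{ass:nested}), $\theta_*^k$ is precisely the embedding of $\theta_*$, so $\ell^{k}_{\theta_*^k}(y_{0:n-1})=\ell^{k^*}_{\theta_*}(y_{0:n-1})$ for every $n$. Writing the quadratic expansion of Proposition \ref{prop:all}(i) for both models, these identical base terms cancel and, suppressing the argument $(y_{0:n-1})$, I am left for all large $n$ with
\begin{align*}
\lambda_n = \tfrac{1}{2}\tfrac{(\nabla\ell^{k}_{\theta_*^k})^\top}{\sqrt{n}}(J^{k}_{\theta_*^k})^{-1}\tfrac{\nabla\ell^{k}_{\theta_*^k}}{\sqrt{n}} - \tfrac{1}{2}\tfrac{(\nabla\ell^{k^*}_{\theta_*})^\top}{\sqrt{n}}(J^{k^*}_{\theta_*})^{-1}\tfrac{\nabla\ell^{k^*}_{\theta_*}}{\sqrt{n}}.
\end{align*}
In each quadratic form the middle matrix converges to the non-singular $\mathcal{J}_{\theta_*^k}^k$ (resp.\ $\mathcal{J}_{\theta_*}^{k^*}$) by Proposition \ref{prop:matrix} and Assumption \ref{ass:non}, hence stays bounded w.p.~1, while Theorem \ref{th:LIL} gives componentwise $|(\nabla\ell^{k}_{\theta_*^k}(y_{0:n-1}))_j|=\mathcal{O}(\sqrt{n\log\log n})$ w.p.~1, i.e.\ $|(\nabla\ell^{k}_{\theta_*^k})_j|/\sqrt{n}=\mathcal{O}(\sqrt{\log\log n})$. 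Each quadratic form is therefore $\mathcal{O}(\log\log n)$ w.p.~1, and so is $\lambda_n$.

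The main obstacle is concentrated in two places. In (i) it is establishing strictness of $\ell^k(\theta_*^k)<\ell^{k^*}(\theta_*)$, which must draw on identifiability and the strict positivity of the relative-entropy rate between the true law and its best $\mathcal{M}_k$-approximation, rather than on mere nonpositivity. In (ii) it is the clean cancellation of the base log-likelihood terms, which hinges on the exact likelihood-preserving nature of the nesting and on identifying $\theta_*^k$ with the embedding of $\theta_*$; once this is secured, the rate is forced by the martingale LIL of Theorem \ref{th:LIL} together with the boundedness of the inverse observed-information matrices. The remaining steps are routine applications of the results assembled in Sections \ref{sec:asy}--\ref{sec:asyL}.
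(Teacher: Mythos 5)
Your part (ii) is essentially the paper's own argument: nonnegativity of $\lambda_n$ from maximisation over the larger parameter domain; cancellation of the base terms because $\ell^{k}_{\theta_*^k}(y_{0:n-1})=\ell^{k^*}_{\theta_*}(y_{0:n-1})$ for every $n$ (which you justify, correctly and slightly more explicitly than the paper, by identifying $\theta_*^k$ with the embedding of $\theta_*$ via Assumption \ref{ass:six} applied to $\mathcal{M}_k$ through Assumption \ref{ass:nested}); and then control of the remaining quadratic forms via a.s.~convergence of the inverse information matrices (Propositions \ref{prop:matrix}, \ref{prop:all}, Assumption \ref{ass:non}) together with the componentwise LIL of Theorem \ref{th:LIL}. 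The first half of part (i) — the a.s.~limit of $n^{-1}\lambda_n$ from Proposition \ref{prop:uniform} plus convergence of the MLEs, and the Jensen argument giving $\ell^{k}(\theta_*^k)-\ell^{k^*}(\theta_*)\le 0$ — also matches the paper.

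The gap sits exactly where you flagged it: strictness in part (i). You assert that, since no parameter of $\mathcal{M}_k$ reproduces the true law, ``the relative-entropy rate separating them is strictly positive.'' That implication is not justified, and it is false as a general principle: the quantity $\ell^{k^*}(\theta_*)-\ell^{k}(\theta_*^k)$ is a relative-entropy \emph{rate} (equivalently, the expectation of the one-step predictive Kullback--Leibler divergence given the infinite past), and distinctness of two process laws does not force such a rate to be positive — for instance, two Markov chains with the same transition kernel but different initial laws have distinct laws yet zero relative-entropy rate. For HMMs, the statement ``zero rate implies equality of the observation laws'' is itself a nontrivial identifiability theorem, which you neither prove nor cite, and which the paper does not provide either. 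The paper's proof sidesteps this entirely: if $\ell^{k}(\theta_*^k)=\ell^{k^*}(\theta_*)$, then by the nested structure the embedded point $(\theta_*^k,\theta_0)\in\Theta_{k^*}$ is a second maximiser of $\ell^{k^*}$, and it must differ from $\theta_*$ because minimality of $k^*$ forbids the true parameter from lying in the embedded copy of $\Theta_k$; this contradicts uniqueness of the maximiser of $\ell^{k^*}$ (Assumption \ref{ass:six} for $\mathcal{M}_{k^*}$, available through Assumption \ref{ass:nested}). You should replace your entropy-rate assertion with this uniqueness-of-maximiser argument, or else supply a proof of the identifiability statement you are implicitly invoking.
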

\begin{proof}
(i)\,\, From Proposition \ref{prop:uniform} we have, w.p.~1, 
\begin{align*}
n^{-1}&\big(\ell_{\hat{\theta}_{n}^{k}}(y_{0:n-1})
-\ell^{k^*}_{\hat{\theta}_n^{k^{*}}}(y_{0:n-1})\big)\rightarrow 
\ell^{k}(\theta_{*}^{k})-\ell^{k^*}(\theta_{*}) 
 \\ & \qquad \quad \equiv 
 \EE\,[\,\log p_{\theta_*^{k}}(y_0|y_{-\infty:-1})\,] - 
 \EE\,[\,\log p_{\theta_*}(y_0|y_{-\infty:-1})\,].
\end{align*}
Using Jensen's inequality and simple calculations, 
one obtains that 
\begin{align*}
 &\EE\,[\,\log p_{\theta_*^{k}}(y_0|y_{-\infty:-1})\,] - 
 \EE\,[\,\log p_{\theta_*}(y_0|y_{-\infty:-1})\,] \\
 & \qquad \le \log \int \tfrac{p_{\theta_*^{k}}(y_0|y_{-\infty:-1})}{p_{\theta_*}(y_0|y_{-\infty:-1})} p_{\theta_*}(y_{-\infty:0})dy_{-\infty:0}
 \equiv  \log1.
\end{align*}
For strict inequality, Assumptions \ref{ass:six} and \ref{ass:nested}  
imply that mapping $\theta\mapsto \ell^{k^*}(\theta)$ has the unique 
maximum $\theta_*\in\Theta_{k^*}$.
Thus, we cannot have $\ell^{k}(\theta_{*}^{k})=\ell^{k^*}(\theta_{*})$, as this would give (from the nested model structure) 
$\ell^{k^*}(\theta_{*})= \ell^{k^*}(\theta_{k},\theta_0)$ for some 
$\theta_0\in \prod_{l=k+1}^{k^*} \Delta \Theta_{l}$, 
with $(\theta_{k},\theta_0)^{\top}\neq \theta_{*}$ (otherwise 
the definition of correct model class would be violated).
 \\[0.2cm]
(ii)\,\, 
Having $\lambda_n\ge 0$ is a consequence of 
the log-likelihood for model $\mathcal{M}_k$ being maximised over a larger 
parameter domain than $\mathcal{M}_{k^*}$.
Then, notice that the limiting matrix $\mathcal{J}_{\theta_*}$  in Proposition \ref{prop:matrix} (for the notation used therein)
is positive-definite: it is non-negative-definite following its definition 
in (\ref{eq:lmatrix}); then, non-singularity Assumption \ref{ass:non} provides 
the positive-definiteness.
From Proposition \ref{prop:all}(i), the difference in the definition of 
$\lambda_n$ equals the difference of two quadratic forms, as the constants 
in the expression for the log-likelihood provided by Proposition \ref{prop:all}(i) are equal for models 
$\mathcal{M}_{k^*}$, $\mathcal{M}_{k}$ and cancel out.
As $\lambda_n\ge 0$, and both quadratic forms are non-negative, 
it suffices to consider the one for model $\mathcal{M}_k$. The a.s.~convergence of 
the positive-definite matrix in the quadratic form implies a.s.~convergence of 
its eigenvalues and eigenvectors. Thus, using Theorem \ref{th:LIL},
overall one has that $\lambda_n = 
\mathcal{O}(\sum_{i=1}^{d}\log\log n) = \mathcal{O}(\log\log n)$.
%
%
\end{proof}

\subsection{Asymptotic Properties of BIC and Evidence}

BIC is known to be strongly consistent 
in i.i.d.~settings and some particular non-i.i.d.~ones \citep{clae:08}. In the context of HMMs, \cite{gass:03} show strong consistency of BIC   for
observations that take a finite set of values.
The key tool to obtain strong consistency of BIC in
a general HMM is  LIL 
we obtained in Section \ref{sec:asyL}.  \cite{nish:88}
also uses LIL for the i.i.d.~setting to prove strong (and weak) consistency
of BIC. 

Recall that $k^*$ denotes the index of the correct model.


\begin{prop}
\label{prop:strong}
\begin{itemize}
\item[(i)]
Let $\hat{k}_{n}$  be the index of the selected model 
obtained via minimizing BIC as defined in (\ref{eq:BIC}).
Then, under Assumptions \ref{ass:1}-\ref{ass:non}, \ref{ass:nested},
we have that $\hat{k}_{n}\rightarrow k^{*}$, w.p.~1.
\item[(ii)] If $\hat{\mathsf{k}}_{n}$ denotes the index obtained via maximising the evidence in 
(\ref{eq:evidence}), then Assumptions \ref{ass:1}-\ref{ass:nested} imply that 
$\hat{\mathsf{k}}_{n}\rightarrow k^{*}$, w.p.~1.
\end{itemize}
\end{prop}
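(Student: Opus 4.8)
The plan is to reduce both claims to a pairwise comparison between each model $\mathcal{M}_k$, $k\neq k^*$, and the correct model $\mathcal{M}_{k^*}$, controlling the difference of the relevant criterion by means of Proposition \ref{prop:diff}. Since the family contains only finitely many models ($p$ of them), it suffices to show that for each fixed $k\neq k^*$ the criterion eventually strictly prefers $\mathcal{M}_{k^*}$ over $\mathcal{M}_k$, w.p.~1; intersecting the finitely many almost-sure events then yields a (random) threshold $N$ beyond which $\hat{k}_n=k^*$ (resp.\ $\hat{\mathsf{k}}_n=k^*$), giving the claimed a.s.~convergence.

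For part (i), I would write $\Delta_n^k := \mathrm{BIC}_k - \mathrm{BIC}_{k^*} = -2\lambda_n + (d_k-d_{k^*})\log n$, with $\lambda_n$ as in Proposition \ref{prop:diff}, and split into two regimes. When $\mathcal{M}_k\subset \mathcal{M}_{k^*}$ (so $d_k<d_{k^*}$), Proposition \ref{prop:diff}(i) gives $n^{-1}\lambda_n\to \ell^k(\theta_*^k)-\ell^{k^*}(\theta_*)<0$, so $-2\lambda_n$ diverges to $+\infty$ linearly in $n$ and dominates the $\mathcal{O}(\log n)$ penalty; hence $\Delta_n^k\to+\infty$. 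When $\mathcal{M}_k\supset \mathcal{M}_{k^*}$ (so $d_k>d_{k^*}$), Proposition \ref{prop:diff}(ii) gives $0\le \lambda_n=\mathcal{O}(\log\log n)$, so $-2\lambda_n$ is dominated by the strictly positive penalty $(d_k-d_{k^*})\log n$, and again $\Delta_n^k\to+\infty$. Either way $\Delta_n^k\to+\infty$ w.p.~1, which is exactly what the pairwise argument requires.

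For part (ii), the starting point is the evidence--BIC link recorded after Proposition \ref{prop:laplace}: for any correctly specified model ($k\ge k^*$), w.p.~1, $\log p^k(y_{0:n-1})=\ell^k_{\hat{\theta}_n^k}(y_{0:n-1})-\tfrac{d_k}{2}\log n+\mathcal{O}(1)=-\tfrac12\mathrm{BIC}_k+\mathcal{O}(1)$. Subtracting the corresponding identity for $k^*$ gives $\log p^k-\log p^{k^*}=\lambda_n-\tfrac{d_k-d_{k^*}}{2}\log n+\mathcal{O}(1)$; for $k>k^*$ Proposition \ref{prop:diff}(ii) shows the $-\tfrac{d_k-d_{k^*}}{2}\log n$ term dominates the $\mathcal{O}(\log\log n)$ contribution of $\lambda_n$, so this difference tends to $-\infty$. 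For the smaller models $k<k^*$ the Laplace expansion is not available (only Assumptions \ref{ass:2}--\ref{ass:six} are imposed there by Assumption \ref{ass:nested}), so I would instead use the crude upper bound $\log p^k(y_{0:n-1})\le \ell^k_{\hat{\theta}_n^k}(y_{0:n-1})$, valid because $\pi^k$ integrates to one and $\hat{\theta}_n^k$ maximises the log-likelihood. Combining this with the Laplace lower bound for $\mathcal{M}_{k^*}$ yields $\log p^k-\log p^{k^*}\le \lambda_n+\tfrac{d_{k^*}}{2}\log n+\mathcal{O}(1)$, and the linear divergence $\lambda_n\to-\infty$ from Proposition \ref{prop:diff}(i) again forces the difference to $-\infty$. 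Hence for every $k\neq k^*$ the evidence eventually strictly prefers $\mathcal{M}_{k^*}$, w.p.~1.

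The main obstacle is precisely the asymmetry in the available regularity across the nested family: the Laplace approximation underpinning the evidence analysis presupposes the differentiability and non-singularity conditions (Assumptions \ref{ass:diff}, \ref{ass:non}), which Assumption \ref{ass:nested} grants only to the models $k\ge k^*$. The resolution is to observe that for the under-parameterised models one does not need a sharp expansion at all, since the log-likelihood gap $\lambda_n$ already diverges at the fast (linear) rate, so the one-sided bound $\log p^k\le \ell^k_{\hat{\theta}_n^k}$ suffices. A secondary point to handle carefully is that all the $\mathcal{O}(1)$ and $\mathcal{O}(\log\log n)$ statements must hold on a single almost-sure event, and that the passage from per-$k$ divergence to simultaneous dominance over all competitors relies on the finiteness of $p$.
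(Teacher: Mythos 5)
Your proof is correct. For part (i) it is essentially identical to the paper's argument: the same pairwise split into the under-parameterised regime (where Proposition \ref{prop:diff}(i) gives linear divergence of the log-likelihood gap, dominating the $\mathcal{O}(\log n)$ penalty) and the over-parameterised regime (where Proposition \ref{prop:diff}(ii) gives $\lambda_n=\mathcal{O}(\log\log n)$, dominated by the $(d_k-d_{k^*})\log n$ penalty), followed by intersecting the finitely many almost-sure events. For part (ii), however, your route genuinely differs from the paper's. The paper disposes of (ii) in one line -- ``this now follows directly from Proposition \ref{prop:laplace}'' -- which implicitly applies the Laplace expansion $\log p^k(y_{0:n-1})=\ell^k_{\hat{\theta}_n^k}(y_{0:n-1})-\tfrac{d_k}{2}\log n+\mathcal{O}(1)$ to \emph{every} model in the family, including the under-parameterised ones; as you observe, Assumption \ref{ass:nested} grants the differentiability and non-singularity conditions underpinning that expansion only to models with $k\ge k^*$, so the paper's shortcut rests on an assumption-set reading that is at best ambiguous. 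Your workaround -- using the elementary bound $p^k(y_{0:n-1})\le p^k_{\hat{\theta}_n^k}(y_{0:n-1})$ (prior integrates to one, MLE maximises the likelihood) for $k<k^*$, and noting that the linear divergence of $\lambda_n$ then swamps the $\tfrac{d_{k^*}}{2}\log n+\mathcal{O}(1)$ correction from the correct model's Laplace expansion -- closes this gap cleanly and needs no smoothness or Hessian non-degeneracy whatsoever for the misspecified smaller models. What the paper's approach buys is brevity and a uniform treatment of all $k\neq k^*$ through a single evidence--BIC identity; what yours buys is a proof that actually matches the asymmetric regularity imposed by Assumption \ref{ass:nested}, and is therefore the more defensible of the two.
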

\begin{proof}
\begin{itemize}
\item[(i)]
We make use of Proposition \ref{prop:diff}. 
Indeed, in the case that $\mathcal{M}_{k}\subset \mathcal{M}_{k^{*}}$, Proposition \ref{prop:diff}(i) gives 
\begin{align*}
&\mathrm{BIC}_n(\mathcal{M}_{k})-\mathrm{BIC}_n(\mathcal{M}_{k^{*}})  \\ &\qquad =n\,\big\{ \tfrac{1}{n}\ell_{\hat{\theta}_{n}^{k^{*}}}(y_{0:n-1})-\tfrac{1}{n}\ell_{\hat{\theta}_{n}^{k}}(y_{0:n-1})-\tfrac{(d_k-d_{k^{*}})\log n}{n}\big\} ,\\
& \qquad \qquad \rightarrow+\infty\,\,\,\,w.p.\,1,
\end{align*}
therefore $\liminf_{n} \hat{k}_{n}\ge k^{*}$, w.p.\,1.
In the case
$\mathcal{M}_{k}\supset \mathcal{M}_{k^{*}}$, we obtain, 
from Proposition \ref{prop:diff}(ii),
\begin{align*}
\mathrm{BIC}_n &(\mathcal{M}_{k})-\mathrm{BIC}_n(\mathcal{M}_{k^{*}})  \\ &
=(d_k-d_{k^{*}})\log n-
\{ 
\ell_{\hat{\theta}_{n}^{k}}(y_{0:n-1})-
\ell_{\hat{\theta}_{n}^{k^{*}}}(y_{0:n-1})
\} \\
& =(d_k-d_{k^{*}})\log n-\mathcal{O}(\log\log n).
\end{align*}
Thus, w.p.~1, for all large enough $n$,  
$\mathrm{BIC}_n (\mathcal{M}_{k})-\mathrm{BIC}_n(\mathcal{M}_{k^{*}})>c_k>0$,
for some constant $c_k$.
%
%
Therefore, we have   $\limsup_{n} \hat{k}_{n}\le  k^{*}$, w.p.~1.
\item[(ii)] Given part (i), this now follows directly from Proposition \ref{prop:laplace}.
\end{itemize}
\end{proof}
\noindent  Therefore, BIC is strongly consistent for a general class of HMMs in the nested model setting we are considering here -- with a model assumed to be a correctly specified one.

\subsection{Asymptotic Properties of AIC}

We can be quite explicit about  the behaviour of AIC.
Consider the case $k>k^*$.
Making use of Proposition \ref{prop:all} we have 
\begin{align}
 \ell_{\hat{\theta}_{n}^{k}}(y_{0:n-1})-&\ell_{\hat{\theta}_{n}^{k^*}}(y_{0:n-1}) \nonumber \\  & = 
\tfrac{1}{2}\tfrac{\nabla^{\top} 
\ell_{\theta_*^k}(y_{0:n-1})}{\sqrt{n}}\mathcal{J}_{\theta_*^k}^{-1}
\tfrac{\nabla
\ell_{\theta_*^k}(y_{0:n-1})}{\sqrt{n}} \nonumber \\
&\qquad \qquad - 
\tfrac{1}{2}\tfrac{\nabla^{\top} 
\ell_{\theta_*}(y_{0:n-1})}{\sqrt{n}}\mathcal{J}_{\theta_*}^{-1}
\tfrac{\nabla
\ell_{\theta_*}(y_{0:n-1})}{\sqrt{n}} + \epsilon_n,
\label{eq:end0}
\end{align}
where $\epsilon_n = o(\log\log n)$, w.p.~1. 
Due to working with nested models, we have (immediately from the definition of $\mathcal{J}_{\theta_*^k}$, $\mathcal{J}_{\theta_*}$) 
\begin{align*}
\mathcal{J}:= \mathcal{J}_{\theta_*^k} = \left( \begin{array}{cc} \mathcal{J}_{11} & \mathcal{J}_{12} \\   
\mathcal{J}_{21} & \mathcal{J}_{22} 
\end{array} \right)\in \mathbb{R}^{d_k\times d_k}.
\end{align*}
where $\mathcal{J}_{11}\equiv \mathcal{J}_{\theta_*}$, and $\mathcal{J}_{12}$, $\mathcal{J}_{21}=\mathcal{J}_{12}^{\top}$ as deduced from $\mathcal{J}_{\theta_*^k}$. Similarly, the quantity $\nabla 
\ell_{\theta_*}(y_{0:n-1})$ forms the upper $d_{k^*}$-dimensional 
part of $\nabla
\ell_{\theta_*^k}(y_{0:n-1})$.
We will make use of the matrix equations implied by
\begin{align*}
\mathcal{J}&\mathcal{J}^{-1}=I_{d_k}\,\, \Longleftrightarrow \\ &\left( \begin{array}{cc} \mathcal{J}_{11} & \mathcal{J}_{12} \\   
\mathcal{J}_{21} & \mathcal{J}_{22} 
\end{array} \right)\left( \begin{array}{cc} (\mathcal{J}^{-1})_{11} & (\mathcal{J}^{-1})_{12} \\   
(\mathcal{J}^{-1})_{21} & (\mathcal{J}^{-1})_{22} 
\end{array} \right) = 
\left( \begin{array}{cc} I_{d_{k^*}} & 0_{d_{k^*}\times (d_k-d_{k^*})} \\   
0_{(d_k-d_{k^*})\times d_{k^*}} & I_{(d_k-d_{k^*})}
\end{array} \right).
\end{align*}
Given the above nesting considerations, some cumbersome but otherwise straightforward calculations give
\begin{align}
&\tfrac{\nabla^{\top} 
\ell_{\theta_*^k}(y_{0:n-1})}{\sqrt{n}}\,\mathcal{J}_{\theta_*^k}^{-1}\,
\tfrac{\nabla
\ell_{\theta_*^k}(y_{0:n-1})}{\sqrt{n}}
 - 
\tfrac{\nabla^{\top} 
\ell_{\theta_*}(y_{0:n-1})}{\sqrt{n}}\,\mathcal{J}_{\theta_*}^{-1}\,
\tfrac{\nabla
\ell_{\theta_*}(y_{0:n-1})}{\sqrt{n}} \nonumber\\
&\qquad \qquad \equiv  
\tfrac{\{ \,M\nabla
\ell_{\theta_*^k}(y_{0:n-1})\}^{\top}}{\sqrt{n}}\,D\, 
\tfrac{M\nabla
\ell_{\theta_*^k}(y_{0:n-1})}{\sqrt{n}},
\label{eq:end}
\end{align}
where we have set
\begin{align*}
M &:= \big(\, (\mathcal{J}^{-1})_{21}\,\,\,\,\{(\mathcal{J}^{-1})_{22}\}^{-1}\, \big)\in 
\mathbb{R}^{(d_k-d_{k^*})\times d_k}; \\[0.2cm]
D &:= \{(\mathcal{J}^{-1})_{22}\}^{-1}\in\mathbb{R}^{(d_k-d_{k^*})\times (d_k-d_{k^*})} .
\end{align*}
Consider the standard decomposition of the symmetric positive-definite $D$,
\begin{align*}
D = P \Lambda P^{\top}, 
\end{align*}
for orthonormal $P\in \mathbb{R}^{(d_k-d_{k}^*)\times (d_k-d_{k}^*)}$ and diagonal $\Lambda\in \mathbb{R}^{(d_k-d_{k}^*)\times (d_k-d_{k}^*)}$.
\begin{assumption}
\label{ass:nontrivial}
Define the martingale increments in $\mathbb{R}^{d_k-d_{k^*}}$,
$k>k^*$.
\begin{align*}
\tilde{h}_{\theta_*^k}(y_{-\infty:0}) := (\sqrt{\Lambda}P^{\top}M)\,h_{\theta_*^k}(y_{-\infty:0}).
\end{align*}
We have that $\mathbb{E}\,\big|\tilde{h}_{\theta_*^k}(y_{-\infty:0})\big|^2>0$.
\end{assumption}

\begin{prop}
\label{prop:clear}
Under Assumptions \ref{ass:1}-\ref{ass:non}, \ref{ass:nested}-\ref{ass:nontrivial}, we have 
that, for $k>k^*$,
\begin{align*}
\mathbb{P}\,\big[\,\mathrm{AIC}_n(\mathcal{M}_{k}) < \mathrm{AIC}_n(\mathcal{M}_{k^*}),\,\,\textrm{infinitely often in}\,\,n\ge 1\,\big] = 1.
\end{align*}
\end{prop}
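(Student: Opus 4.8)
The plan is to reduce the AIC comparison to an overshoot statement about the log-likelihood gap and then let the martingale LIL force that gap past the fixed AIC penalty difference infinitely often. Writing out the two criteria via (\ref{eq:-22}), the event $\{\mathrm{AIC}_n(\mathcal{M}_{k}) < \mathrm{AIC}_n(\mathcal{M}_{k^*})\}$ is exactly $\{\ell_{\hat{\theta}_n^k}(y_{0:n-1}) - \ell_{\hat{\theta}_n^{k^*}}(y_{0:n-1}) > d_k - d_{k^*}\}$, where $d_k - d_{k^*}>0$ is a fixed constant. So it suffices to show that this gap exceeds the constant $d_k - d_{k^*}$ for infinitely many $n$, w.p.~1.

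First I would invoke the decomposition already assembled in (\ref{eq:end0})--(\ref{eq:end}): the gap equals $\tfrac12\,\tfrac{\{M\nabla\ell_{\theta_*^k}(y_{0:n-1})\}^{\top}}{\sqrt n}\,D\,\tfrac{M\nabla\ell_{\theta_*^k}(y_{0:n-1})}{\sqrt n} + \epsilon_n$, with $\epsilon_n = o(\log\log n)$ w.p.~1. Diagonalising $D = P\Lambda P^{\top}$ recasts the quadratic form as $\tfrac12\,\big|\sqrt{\Lambda}P^{\top}M\,\nabla\ell_{\theta_*^k}(y_{0:n-1})/\sqrt n\big|^2$. Substituting Proposition~\ref{prop:all}(ii), namely $\nabla\ell_{\theta_*^k}(y_{0:n-1})/\sqrt n = n^{-1/2}\sum_{i=0}^{n-1}h_{\theta_*^k}(y_{-\infty:i}) + n^{-1/2}R_n$, and recognising $\tilde{h}_{\theta_*^k} = (\sqrt{\Lambda}P^{\top}M)h_{\theta_*^k}$ from Assumption~\ref{ass:nontrivial}, the inner vector becomes $n^{-1/2}(\tilde{M}_n + S_n)$, where $\tilde{M}_n := \sum_{i=0}^{n-1}\tilde{h}_{\theta_*^k}(y_{-\infty:i})$ and $S_n := (\sqrt{\Lambda}P^{\top}M)R_n$. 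Since $\tilde{h}_{\theta_*^k}$ is a fixed linear image of the stationary, ergodic martingale increments $h_{\theta_*^k}$, the process $\{\tilde{M}_n\}$ is itself a stationary ergodic martingale under the data filtration, so the martingale LIL (\ref{eq:maLIL}) applies to it coordinatewise.

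The heart of the argument is then the following. Assumption~\ref{ass:nontrivial} ensures $\mathbb{E}\,|\tilde{h}_{\theta_*^k}(y_{-\infty:0})|^2>0$, so some coordinate $j_0$ has $c := \mathbb{E}^{1/2}[(\tilde{h}_{\theta_*^k}(y_{-\infty:0}))_{j_0}^2] > 0$. The LIL (\ref{eq:maLIL}) gives $\limsup_n |\tilde{M}_{n,j_0}|/\sqrt{2n\log\log n} = c$, so on the almost-sure event where this holds there is a subsequence $n_m\uparrow\infty$ with $|\tilde{M}_{n_m}| \ge |\tilde{M}_{n_m,j_0}| \sim c\sqrt{2n_m\log\log n_m}$. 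Proposition~\ref{prop:all}(iii) gives $n^{-1/2}R_n\to 0$ w.p.~1, hence $|S_n| = o(\sqrt n)$, and the triangle inequality $|\tilde{M}_n + S_n|\ge |\tilde{M}_n| - |S_n|$ then yields $\tfrac12 n_m^{-1}|\tilde{M}_{n_m}+S_{n_m}|^2 \sim c^2\log\log n_m$. Because $\epsilon_n = o(\log\log n)$, along this subsequence the gap behaves like $c^2\log\log n_m + o(\log\log n_m)\to\infty$, which eventually dominates the fixed constant $d_k - d_{k^*}$. Thus, on the almost-sure event where the LIL, the bound $n^{-1/2}R_n\to0$, and $\epsilon_n=o(\log\log n)$ simultaneously hold, the desired inequality occurs for all large $m$, i.e.~infinitely often; consequently the probability in the statement equals $1$.

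I expect the main obstacle to be the careful bookkeeping of the two error terms against the LIL scale $\sqrt{n\log\log n}$: one must verify that $S_n$, coming from the score remainder $R_n$, is $o(\sqrt n)$ and hence negligible after dividing by $n$ and comparing to $\log\log n$, and that $\epsilon_n$ — the aggregate of the higher-order Taylor remainders and the cross-terms between the two models' quadratic approximations in (\ref{eq:end0})--(\ref{eq:end}) — is genuinely $o(\log\log n)$, uniformly enough to be absorbed. A secondary subtlety is that the LIL is a $\limsup$ statement, so the conclusion is intrinsically of the ``infinitely often'' type and must be extracted along the subsequence realising the $\limsup$ rather than as a limit; collapsing the vector LIL onto a single positive-variance coordinate via Assumption~\ref{ass:nontrivial} is precisely what makes this step clean.
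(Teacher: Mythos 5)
Your proposal is correct and follows essentially the same route as the paper's own (much terser) proof: reduce the AIC comparison to the log-likelihood gap exceeding the constant $d_k-d_{k^*}$, invoke the decomposition (\ref{eq:end0})--(\ref{eq:end}), apply Stout's martingale LIL to the transformed increments $\tilde{h}_{\theta_*^k}=(\sqrt{\Lambda}P^{\top}M)h_{\theta_*^k}$, and use Assumption \ref{ass:nontrivial} to obtain a positive-variance coordinate so that the gap is of order $\log\log n$ infinitely often, swamping the fixed penalty difference. Your write-up merely makes explicit the bookkeeping (the remainder $S_n=o(\sqrt{n})$ via Proposition \ref{prop:all}(iii), the subsequence realising the $\limsup$) that the paper compresses into ``the result follows immediately.''
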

\begin{proof}
Continuing from (\ref{eq:end0}), (\ref{eq:end}), the use of LIL for martingale increments 
will give that, w.p.~1,
\begin{align*}
 \limsup_n &\frac{\sqrt{2}\big\{\ell_{\hat{\theta}_{n}^{k}}(y_{0:n-1})- \ell_{\hat{\theta}_{n}^{k^*}}(y_{0:n-1})\}}{\log \log n}
 \\ &\qquad \qquad \ge \sup_{1\le j \le d_{k}-d_{k^*}} \mathbb{E}\,\big[\,(\tilde{h}_{\theta_*^k}(y_{-\infty:0}))^2_j\,\big] > 0.
\end{align*}
As the difference $\ell_{\hat{\theta}_{n}^{k}}(y_{0:n-1})- \ell_{\hat{\theta}_{n}^{k^*}}(y_{0:n-1})$ is of size 
$\Theta(\log\log n)-o(\log\log n)$ infinitely often, the result follows immediately. (The notation $\Theta(a_n)$ for  a positive sequence $\{a_n\}$ 
means that the sequence of interest is lower and upper bounded by $c a_n$ and $c' a_n$ respectively
for constants $0<c<c'$.)
\end{proof}

\noindent Thus, AIC is not a consistent Model Selection
Criterion -- in contrast with BIC. Still, it is well known
that AIC has desirable properties, e.g.~with regards to prediction error (in many cases the model chosen by AIC attains the
minimum maximum error in terms of prediction among models being
considered), or its 
minimax optimality. 
\citet{barr:99} is a comprehensive article of
this topic and shows that minimax optimality of AIC holds in many
cases, including the i.i.d., some non-linear models and for density
estimation. 
For works on the efficiency of 
AIC terms of prediction see \cite{shib:80, shib:81, shao1997asymptotic}. 
AIC is equivalent to LOO cross-validation \citep{stone1977asymptotic} for i.i.d.-type model structures. We 
refer to \citet{ding2017bridging, ding2018model} for a comprehensive review of AIC and
BIC. Note that \cite{yang2005can} shows that consistency of model selection and minimax optimality do not necessarily hold simultaneously.
Our main focus in this work is asymptotic behaviour of criteria from a model selection viewpoint, so we will not further examine the prediction perspective.

\subsection{A General Result}

Following e.g.~\cite{sin:96}, one can generalise some of the above results for arbitrary penalty functions.
Assume that we consider Information Criterion (IC) of the form
\begin{align}
\label{eq:gIC}
\mathrm{IC}_n(\mathcal{M}_k)  =  -\ell_{\hat{\theta}_{n}^{k}}(y_{0:n-1})
+ pen_n(k),  
\end{align}
for a penalty function $pen_n(k)\in\mathbb{R}$, (strictly) increasing in $k\ge 1$.

\begin{prop}
\label{prop:Gstrong}
\begin{itemize}
\item[(i)]
 If the following hold, for $k'>k\ge 1$,
\begin{align*}
\lim_{n\rightarrow\infty}\tfrac{pen_{n}(k') - pen_{n}(k)}{n}  =0,\quad  
\lim_{n\rightarrow\infty}\tfrac{pen_{n}(k')-  pen_{n}(k)}{\log\log n} & =+\infty,
\end{align*}
then, under Assumptions \ref{ass:1}-\ref{ass:non}, \ref{ass:nested}, the information criterion $\mathrm{IC}_n$ in (\ref{eq:gIC}) is strongly consistent.
\item[(ii)]
If the following hold, for $k'>k\ge 1$,
\begin{align*}
\lim_{n\rightarrow\infty}\tfrac{pen_{n}(k') - pen_{n}(k)}{n}  =0,\quad  
\lim_{n\rightarrow\infty} pen_{n}(k')-  pen_{n}(k)  =+\infty,
\end{align*}
then, under Assumptions \ref{ass:1}-\ref{ass:non}, \ref{ass:nested}, the Information Criterion $\mathrm{IC}_n$ in (\ref{eq:gIC}) is weakly consistent.
\end{itemize}
\end{prop}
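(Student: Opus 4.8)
The plan is to adapt the BIC argument of Proposition~\ref{prop:strong}, replacing the explicit $d\log n$ penalty by the generic $pen_n(k)$ and tracking exactly where each of the two penalty-growth conditions is needed. For a fixed $k\neq k^*$, write $\lambda_n^{(k)}:=\ell^{k}_{\hat{\theta}_n^{k}}(y_{0:n-1})-\ell^{k^*}_{\hat{\theta}_n^{k^*}}(y_{0:n-1})$, the quantity $\lambda_n$ of Proposition~\ref{prop:diff}. Then the pairwise comparison of criteria satisfies
\begin{align*}
\mathrm{IC}_n(\mathcal{M}_k)-\mathrm{IC}_n(\mathcal{M}_{k^*}) = -\lambda_n^{(k)} + \big(pen_n(k)-pen_n(k^*)\big),
\end{align*}
and, since $p$ is finite, it suffices to show that for every $k\neq k^*$ this difference is eventually positive -- w.p.~1 for part~(i), and with probability tending to one for part~(ii).

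First I would dispatch the underfitting regime $k<k^*$, which is handled identically in both parts. Proposition~\ref{prop:diff}(i) gives $n^{-1}\lambda_n^{(k)}\to \ell^{k}(\theta_*^{k})-\ell^{k^*}(\theta_*)<0$ w.p.~1, so $-\lambda_n^{(k)}$ grows linearly in $n$; the common condition $(pen_n(k^*)-pen_n(k))/n\to 0$ makes the penalty difference $o(n)$, hence negligible against this linear growth. Therefore $\mathrm{IC}_n(\mathcal{M}_k)-\mathrm{IC}_n(\mathcal{M}_{k^*})\to+\infty$ w.p.~1, which rules out every $k<k^*$ almost surely (and so also in probability) and yields $\liminf_n\hat{k}_n\ge k^*$.

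The two parts diverge only in the overfitting regime $k>k^*$, where Proposition~\ref{prop:diff}(ii) provides merely $0\le \lambda_n^{(k)}=\mathcal{O}(\log\log n)$ w.p.~1. For part~(i) I would pair this envelope with the stronger condition $(pen_n(k)-pen_n(k^*))/\log\log n\to+\infty$: the penalty then dominates the almost-sure $\log\log n$ fluctuation of $\lambda_n^{(k)}$, so $\mathrm{IC}_n(\mathcal{M}_k)-\mathrm{IC}_n(\mathcal{M}_{k^*})\to+\infty$ w.p.~1, giving $\limsup_n\hat{k}_n\le k^*$ and hence strong consistency. For part~(ii) the weaker requirement $pen_n(k)-pen_n(k^*)\to+\infty$ cannot beat the almost-sure $\limsup$, so instead I would argue via tightness. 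The wrong-selection event is $\{\lambda_n^{(k)}>pen_n(k)-pen_n(k^*)\}$, so it suffices that $\lambda_n^{(k)}$ be bounded in probability. This follows from the decomposition (\ref{eq:end0})--(\ref{eq:end}): the leading term is the quadratic form $\tfrac12\,n^{-1/2}\{M\nabla\ell_{\theta_*^k}(y_{0:n-1})\}^{\top}D\,n^{-1/2}M\nabla\ell_{\theta_*^k}(y_{0:n-1})$ and the residual is $\epsilon_n=o(\log\log n)$, while the CLT of Proposition~\ref{prop:all}(iii) gives $n^{-1/2}M\nabla\ell_{\theta_*^k}(y_{0:n-1})\Rightarrow N(0,M\mathcal{J}M^{\top})$; thus $\lambda_n^{(k)}$ converges in distribution and is tight. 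As the threshold $pen_n(k)-pen_n(k^*)$ diverges, $\PP[\lambda_n^{(k)}>pen_n(k)-pen_n(k^*)]\to0$, ruling out each overfitting $k$ in probability and giving $\limsup_n\hat{k}_n\le k^*$ in probability, hence weak consistency.

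The hard part is precisely this contrast in the overfitting regime: recognising that strong consistency forces the penalty to outgrow the almost-sure $\log\log n$ envelope coming from the LIL (Theorem~\ref{th:LIL}, via Proposition~\ref{prop:diff}(ii)), whereas weak consistency only needs to exceed the tight-in-distribution size of $\lambda_n^{(k)}$, for which any divergent penalty difference is enough. Making the tightness step fully rigorous amounts to checking that the residual $\epsilon_n$ and the matrix convergence $J_{\theta_*}(y_{0:n-1})\to\mathcal{J}_{\theta_*}$ of Proposition~\ref{prop:matrix} do not perturb the distributional limit, but this is routine given Propositions~\ref{prop:matrix} and~\ref{prop:all}.
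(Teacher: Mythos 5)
Your proposal is correct and follows essentially the same route as the paper: part (i) is treated as the direct generalisation of Proposition \ref{prop:strong}, and for part (ii) the paper likewise shows that in the overfitting case $\lambda_n^{(k)}$ converges weakly (to $Z_0=\tfrac{1}{2}Z^{\top}\mathcal{J}_{\theta_*^k}^{-1}Z-\tfrac{1}{2}Z^{\top}_{1:d_{k^*}}\mathcal{J}_{\theta_*}^{-1}Z_{1:d_{k^*}}$, $Z\sim N_{d_k}(0,\mathcal{J}_{\theta_*^k})$), so that any diverging penalty difference drives the wrong-selection probability to zero. The one point worth noting is that the paper secures this tightness from the exact identity of Proposition \ref{prop:all}(i) plus the CLT and continuous mapping theorem (with $J_{\theta_*}(y_{0:n-1})\rightarrow\mathcal{J}_{\theta_*}$ handled by Slutsky), rather than from the decomposition (\ref{eq:end0}) whose residual bound $\epsilon_n=o(\log\log n)$ alone would not imply tightness -- this is exactly the ``routine'' verification you correctly deferred, and it does close as you claim.
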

\begin{proof}
(i)\,\,It is an immediate generalisation of  the proof of Proposition 
\ref{prop:strong}.\\[0.1cm]
(ii)\,\,
%
%
First, let us consider the case when $\mathcal{M}_{k}\subset\mathcal{M}_{k^{*}}$. 
Then, for any $\epsilon>0$ we have 
\begin{align*}
\mathbb{P}\,\big[\,&\mathrm{IC}_{n}(\mathcal{M}_{k})-\mathrm{IC}_{n}(\mathcal{M}_{k^{*}})>\epsilon\,\big] &
\\ &=\mathbb{P}\,\big[\,\ell_{\hat{\theta}_{n}^{k^{*}}}(y_{0:n-1})-\ell_{\hat{\theta}_{n}^{k}}(y_{0:n-1})
+(pen_{n}(k)-pen_{n}(k^{*}))>\epsilon\,\big]
\\
& \qquad \rightarrow 1. 
\end{align*}
The limit follows from Proposition \ref{prop:diff}(i), 
as the random variable on the left side of the inequality above 
diverges to $+\infty$ w.p.~1, and a.s.~convergence implies convergence in probability. 
This result implies directly  $\lim_{n\rightarrow \infty}\mathbb{P}\,[\,\hat{k}_{n}\ge k^{*}\,] =1$, 
where $\hat{k}_{n}$ denotes the model index minimising $\mathrm{IC}_n(\mathcal{M}_k)$,
$1\le k \le p$.

Next, we consider the case where $\mathcal{M}_{k^{*}}\subset\mathcal{M}_{k}$.
We have that 
\begin{align}
\mathbb{P}\,\big[\,&\mathrm{IC}_{n}(\mathcal{M}_{k}) \le  \mathrm{IC}_{n}(\mathcal{M}_{k^{*}}) \,\big] \nonumber  &
\\ &=\mathbb{P}\,\big[\,\ell_{\hat{\theta}_{n}^{k}}(y_{0:n-1})-\ell_{\hat{\theta}_{n}^{k^*}}(y_{0:n-1})
\ge (pen_{n}(k)-pen_{n}(k^{*}))\,\big].
\label{eq:proo}
\end{align}
Recall the expression for $\ell_{\hat{\theta}_{n}^{k}}(y_{0:n-1})-\ell_{\hat{\theta}_{n}^{k^*}}(y_{0:n-1})$ implied by Proposition \ref{prop:all}(i).
%
%
 The martingale CLT in Proposition \ref{prop:all}(iii) gives that 
\begin{align*}
\tfrac{\nabla
\ell_{\theta_*^k}(y_{0:n-1})}{\sqrt{n}} \Rightarrow 
 N_{d_k}\big( 0, \mathcal{J}_{\theta_*^k}    \big),
\end{align*}
with $\mathcal{J}_{\theta_*^k}$ defined in the obvious manner. 
Let $Z\sim N_{d_k}\big( 0,\mathcal{J}_{\theta_*^k}    \big)$;
the continuous mapping theorem gives that 
\begin{align*}
 \ell_{\hat{\theta}_{n}^{k}}(y_{0:n-1})&-\ell_{\hat{\theta}_{n}^{k^*}}(y_{0:n-1}) \Rightarrow \,\, 
\\  & = 
\tfrac{1}{2}
Z^{\top} \mathcal{J}_{\theta_*^k}^{-1} Z
 - 
\tfrac{1}{2}Z^{\top}_{1:d_{k^*}}\mathcal{J}_{\theta_*}^{-1} Z_{1:d_{k^*}}
=: Z_0
%
\end{align*}
Continuing from (\ref{eq:proo}), 
since $|Z_0|<\infty$, w.p.~1,
for any $\epsilon>0$ we can have some  $n_0$ so that 
for all  $n_1\ge n_0$, $\mathbb{P}\,[\,Z_0\ge pen_{n_1}(k)-pen_{n_1}(k^{*})\,]<\epsilon$. 
Thus, for all $n$ large enough, we have  
that 
\begin{align*}
\mathbb{P}\,\big[\,&\ell_{\hat{\theta}_{n}^{k}}(y_{0:n-1})-\ell_{\hat{\theta}_{n}^{k^*}}(y_{0:n-1})
\ge (pen_{n}(k)-pen_{n}(k^{*}))\,\big] \\
&\le \mathbb{P}\,\big[\,\ell_{\hat{\theta}_{n}^{k}}(y_{0:n-1})-\ell_{\hat{\theta}_{n}^{k^*}}(y_{0:n-1})
\ge (pen_{n_1}(k)-pen_{n_1}(k^{*}))\,\big]  \\
& \qquad \rightarrow \mathbb{P}\,[\,Z_0\ge pen_{n_1}(k)-pen_{n_1}(k^{*})\,]<\epsilon.
\end{align*}
Thus, 
we conclude that 
$\mathbb{P}\,[\,\mathrm{IC}_{n}(\mathcal{M}_{k}) \le  \mathrm{IC}_{n}(\mathcal{M}_{k^{*}})]\rightarrow 0$, so that we have obtained  $\lim_{n\rightarrow \infty}\mathbb{P}\,[\,\hat{k}_{n}\le k^{*}\,] =1$. 

Overall, we have shown that 
$\lim_{n\rightarrow \infty}\mathbb{P}\,[\,\hat{k}_{n}= k^{*}\,] =1$.
%
\end{proof}

\noindent The above results highlight that
the penalty function should 
grow to infinity with the sample size (at certain rate)
to deliver strongly or weakly
consistent~IC.  

\section{Particle Approximation of AIC and BIC}
\label{sec:algorithm}
BIC and AIC can be used for model selection for HMMs but
are typically impossible to calculate analytically
due to intractability of the likelihood function for general
HMMs. 
Thus,
an approximation technique is required. 
We adopt the computational approach of \cite{poyi:11} 
which -- for completeness -- we briefly review in this section.
It involves a particle filtering algorithm coupled with a recursive construction 
for an integral approximation. 

The description  follows closely \cite{poyi:11}. 
The marginal Fisher identity
 gives, %
\begin{align*}
\nabla \ell_{\theta}(y_{0:n}) 
 =\int_{\mathsf{X}}\nabla \log p_{\theta}(x_{n},y_{0:n}) p_{\theta}(x_{n}|y_{0:n})\mu(dx_{n}).
\end{align*}
%
At step $n$, let $(x_{n}^{(i)},w_{n}^{(i)})_{i=1}^{N}$ be a particle approximation of 
$p_{\theta}(dx_{n}| y_{0:n})$, with standardised weights, i.e.~$\sum_i w_{n}^{(i)}=1$, obtained via some particle filtering algorithm,
so that, 
\begin{align}
\nabla \ell_{\theta}(y_{0:n}) 
 \approx \sum_{i=1}^{N}w_n^{(i)}\nabla \log p_{\theta}(x_{n}^{(i)},y_{0:n}). 
\label{eq:rec1}
\end{align}
%
%
We explore the unknown quantity $\nabla \log p_{\theta}(x_{n},y_{0:n})$.
We have,
\begin{align}
p_{\theta}(x_{n},&y_{0:n}) \nonumber \\ 
&=p_{\theta}(y_{0:n-1}) g_{\theta}(y_{n}|x_{n})\int_{\mathsf{X}} q_{\theta}(x_{n}|x_{n-1})p_{\theta}(x_{n-1}| y_{0:n-1})\mu(dx_{n-1}). \label{eq:rec2}
\end{align}
This implies that, 
\begin{align}
&\nabla p_{\theta}(x_{n},y_{0:n})  =
  p_{\theta}(y_{0:n-1})g_{\theta}(y_{n}|x_n)\int_{\mathsf{X}} q_{\theta}(x_{n}|x_{n-1})p_{\theta}(x_{n-1}| y_{0:n-1})\times \nonumber  \\
 & \!\!\!\!\!\Big\{ \nabla \log g_{\theta}(y_{n}|x_{n})+\nabla \log q_{\theta}(x_{n}|x_{n-1})+\nabla \log p_{\theta}(x_{n-1},y_{0:n-1})\Big\} \mu(dx_{n-1}). \label{eq:rec3}
\end{align}
At step $n-1$, let $(x_{n-1}^{(j)},w_{n-1}^{(j)})_{j=1}^{N}$ be a particle approximation of 
the filtering distribution $p_{\theta}(dx_{n-1}| y_{0:n-1})$ and  $(\alpha_{n-1}^{(j)})_{j=1}^{N}$ a sequence of approximations to $(\nabla\log p_{\theta}(x_{n-1}^{(j)},y_{0:n-1}))_{j=1}^{N}$.
Equations (\ref{eq:rec2}), (\ref{eq:rec3}) suggest the following recursive approximation 
of $\nabla\log  p_{\theta}(x_{n}^{(i)},y_{0:n})$, for $1\le i \le N$,
\begin{align}
\alpha_{n}^{(i)} & =\frac{\sum_{j=1}^{N}w_{n-1}^{(j)}q_{\theta}(x_{n}^{(i)}|x_{n-1}^{(j)})}{\sum_{j=1}^{N}w_{n-1}^{(j)}q_{\theta}(x_{n}^{(i)}|x_{n-1}^{(j)})} \times \nonumber  \\[0.2cm] & \qquad \qquad \big\{ \nabla\log g_{\theta}(y_{n}|x_n^{(i)})+\nabla\log q_{\theta}(x_{n}^{(i)}|x_{n-1}^{(j)})+\alpha_{n-1}^{(j)}\big\} , \label{eq:rec4}
\end{align}
%
%
Thus, from (\ref{eq:rec1}), one obtains an estimate of the score function at step $n$, as
\begin{align}
\label{eq:uscore}
\nabla \ell_{\theta}(y_{0:n})  \approx \sum_{i=1}^{N}w_{n}^{(i)}\alpha_{n}^{(i)}.
\end{align}
The calculation in (\ref{eq:rec4}), and the adjoining particle filtering algorithm, can be applied 
recursively
to provide the approximation of the score function in (\ref{eq:uscore}) for $n=0,1,\ldots$.
%
Note that the computational cost of this algorithm is $\mathcal{O}(N^{2})$, 
but is robust for increasing $n$ as it is based on the approximation of the 
filtering distributions rather than the smoothing ones, see results and comments on this point in \cite{poyi:11}. 

Moreover, \cite{poyi:11} use the score function estimation methodology
to propose an \emph{online} gradient ascent algorithm 
for obtaining an MLE-type parameter estimate. 
In more detail, the method is based on the recursion
%
\begin{align}
\theta_{n+1} & =\theta_{n}+\gamma_{n+1}\nabla \log p_{\theta}(y_{n}| y_{0:n-1})\big\vert_{\theta=\theta_{n}},
\end{align}
where $\{\gamma_{k}\}_{k\geq1}$ is a positive decreasing sequence
with 
\begin{align*}
\sum_{k=1}^{\infty}\gamma_{k}=\infty,\quad \sum_{k=1}^{\infty}\gamma_{k}^{2}<\infty.
\end{align*}
To deduce an online algorithm -- following ideas in \cite{legl:97} -- intermediate 
quantities involved in the recursions in (\ref{eq:rec1})-(\ref{eq:rec4}) are calculated at 
different, consecutive parameter values. 
See \cite{poyi:11} for more details, and \cite{legl:97,tadi:18} for analytical studies on the 
convergence properties of the algorithm. In particular, under strict conditions
the algorithm is shown to converge to the maximiser $\theta_*$
of the limiting function of $\theta\mapsto \ell_n(\theta)/n$, as $n\rightarrow\infty$.

\begin{rem}
\label{rem:proxy}
In our setting, we want to use the numerical studies to illustrate the theoretical results obtained for AIC and BIC, so we will use the outcome of the online recursion as  proxy for the MLE. Then,  
the AIC and BIC will be approximated by running a particle filter for the chosen MLE value to obtain an approximation of the log-likelihood of the data at this parameter value.
\end{rem}

%

\section{Empirical Study}
\label{sec:numerics}

We consider the following stochastic
volatility model  (labeled as $\mathcal{SV}$)
\begin{equation*}
\mathcal{SV}:\,\,\, \left\{ \begin{array}{l} X_{t} =\phi X_{t-1}+W_{t}, \\
\,Y_{t}  =\exp(X_{t}/2)V_{t}, \quad t\ge 1, \end{array} \right.
\end{equation*}
and the one with jumps (labeled as $\mathcal{SVJ}$),
\begin{equation*}
\mathcal{SVJ}:\,\,\, \left\{ \begin{array}{l}X_{t}  =\phi X_{t-1}+W_{t}\\
\,Y_{t}  =\exp(X_{t}/2)V_{t}+q_{t}J_{t},\quad t\ge 1, \end{array} \right.
\end{equation*}
where $W_{t}\sim N(0,\sigma_{X}^{2})$, $V_{t}\sim N(0,1)$, $J_{t}\sim N(0,\sigma_{J}^{2})$
and $q_{t}\sim Bernoulli(p)$, all variables assumed independent over the time index $t\ge 1$. 
In both cases $X_0=0$.
The extended model $\mathcal{SVJ}$ can be used to capture instantaneous big jumps 
in the relative changes of the values of the underlying asset; 
the choice of models has been motivated by their use in the literature, see e.g.~\cite{pitt:14}.
Figure
\ref{fig:data} shows two sets of  $n=10^4$  simulated observations, one from $\mathcal{SV}$ and one from 
$\mathcal{SVJ}$, under the 
corresponding true parameter values 
\begin{align*}
(\phi,\,\sigma_{X})=(0.9,\,\sqrt{0.3});\quad (\phi,\,\sigma_{X},\,\sigma_{J},\,p)=(0.9,\,\sqrt{0.3},\,\sqrt{0.6},\,0.6).
\end{align*}
These simulated data will be used in the experiments 
that follow. Scenario~1 (resp.~Scenario 2) corresponds to the case when the true model is $\mathcal{SVJ}$ (resp.~$\mathcal{SV}$). We will compare the two  models, using AIC and BIC, 
in both Scenarios.
The estimated parameter values for $\mathcal{SV}$ and $\mathcal{SVJ}$ -- and the estimates for AIC and BIC using a particle filter --
are obtained via the method of \cite{poyi:11},
reviewed in Section \ref{sec:algorithm}. 
Note that, as we have established in this work, BIC is expected to be consistent for both Scenarios, whereas 
AIC  only for the first Scenario.

\begin{figure}[!h]
\centering
\includegraphics[scale=0.30,angle=270]{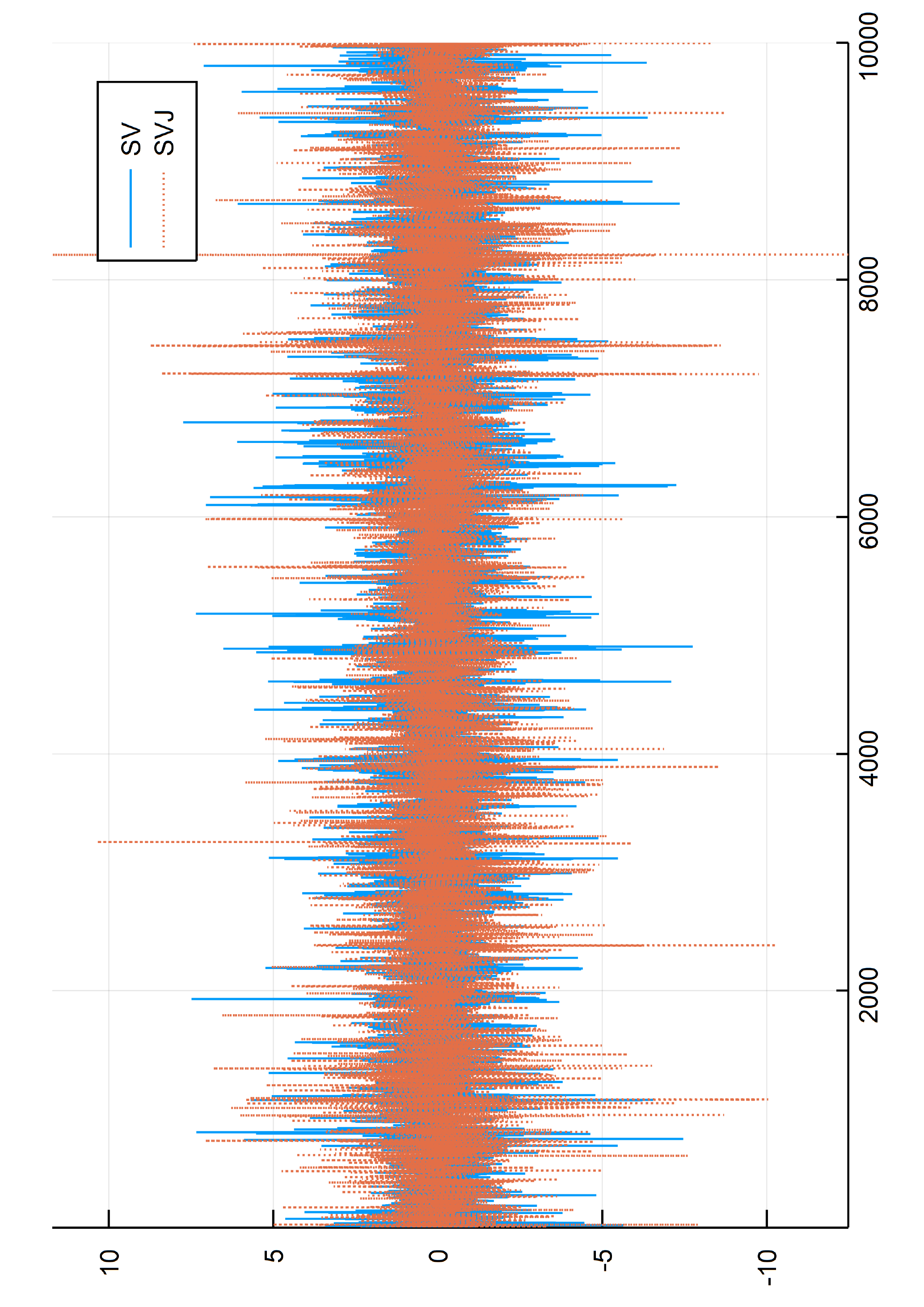}
\vspace{-0.2cm}
\caption{The $n=10^4$ simulated observations from models $\mathcal{SV}$, $\mathcal{SVJ}$, with parameter values $(\phi,\,\sigma_{X})=(0.9,\,\sqrt{0.3})$, $(\phi,\,\sigma_{X},\,\sigma_{J},\,p)=(0.9,\,\sqrt{0.3},\,\sqrt{0.6},\,0.6)$, respectively, to be used in the experiments.}
\label{fig:data}
\end{figure}

We set $\gamma_{n} = n^{-2/3}$ for all numerical experiments in the sequel. Figure \ref{fig:trace} shows estimated parameter values for $\mathcal{SV}$, 
$\mathcal{SVJ}$, 
for both simulated datasets, 
sequentially in the data size, using the online version of the  method
of Section \ref{sec:algorithm}, with $N=200$ particles. 
(We tried also larger number of particles, 
with similar results.)
To further investigate the stability of the algorithm in Section \ref{sec:algorithm}
we summarize in Figures \ref{fig:box1},  \ref{fig:box2} estimates of AIC and BIC for the two models from $R=200$ replications of the same algorithm, for different data sizes.
Figure~\ref{fig:box1} corresponds to Scenario 1
and Figure \ref{fig:box2} to Scenario~2. 
The results obtained seem to indicate that the numerical algorithm used for approximating AIC and BIC is fairly robust in all cases. Also, it appears that in the challenging Scenario~     2, 
even with $n=10^4$ observations, the boxplots do not seem to provide any decisive evidence in favor of 
true model $\mathcal{SV}$.

\begin{figure}[!h]
\centering
\includegraphics[scale=0.38, angle=270]{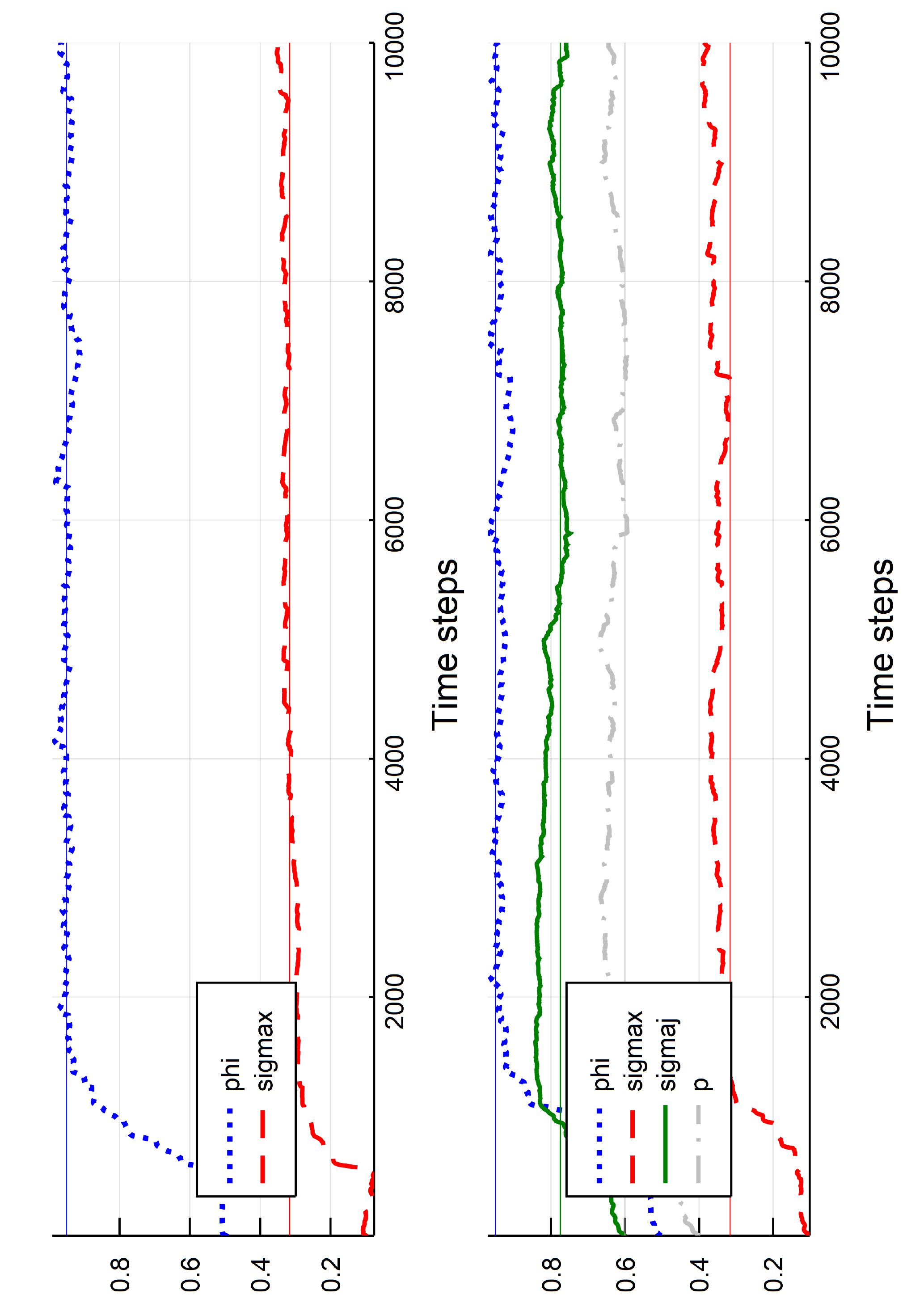}
\vspace{-0.2cm}
\caption{Estimated parameters for the $\mathcal{SV}$ (top panel) and $\mathcal{SVJ}$ (bottom panel) models as obtained -- sequentially in time -- via 
the data simulated from the $\mathcal{SV}$ (top panel) and $\mathcal{SVJ}$ (bottom panel) models respectively and the algorithm reviewed in Section \ref{sec:algorithm} with $N=200$ particles. The horizontal lines indicate the true parameter values in each case.}
\label{fig:trace}
\end{figure}

\begin{figure}[!h]
\centering
\includegraphics[scale=0.28, angle=270]{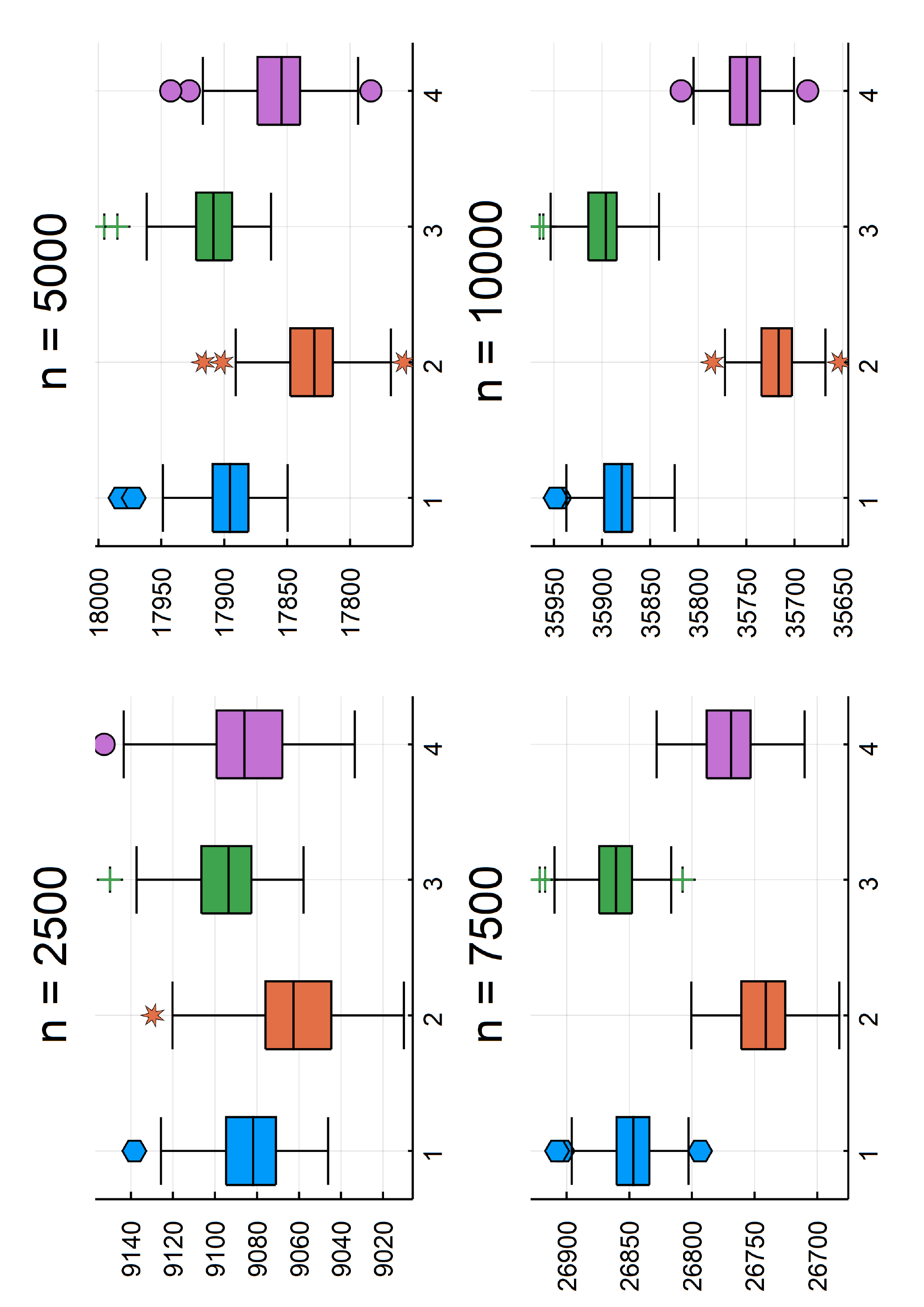}
\caption{Boxplots for Scenario 1 ($\mathcal{SVJ}$ model is true) from $R=200$ estimates ($R$ denotes the replications of the numerical algorithm) of an Information Criterion (IC) and various observation sizes. Blue: $\mathrm{AIC}(\mathcal{SV})$, Orange: $\mathrm{AIC}(\mathcal{SVJ})$, Green:
$\mathrm{BIC}(\mathcal{SV})$, Purple: $\mathrm{BIC}(\mathcal{SVJ})$.}
\label{fig:box1}
\end{figure}

\begin{figure}[!h]
\centering
\includegraphics[scale=0.28, angle=270]{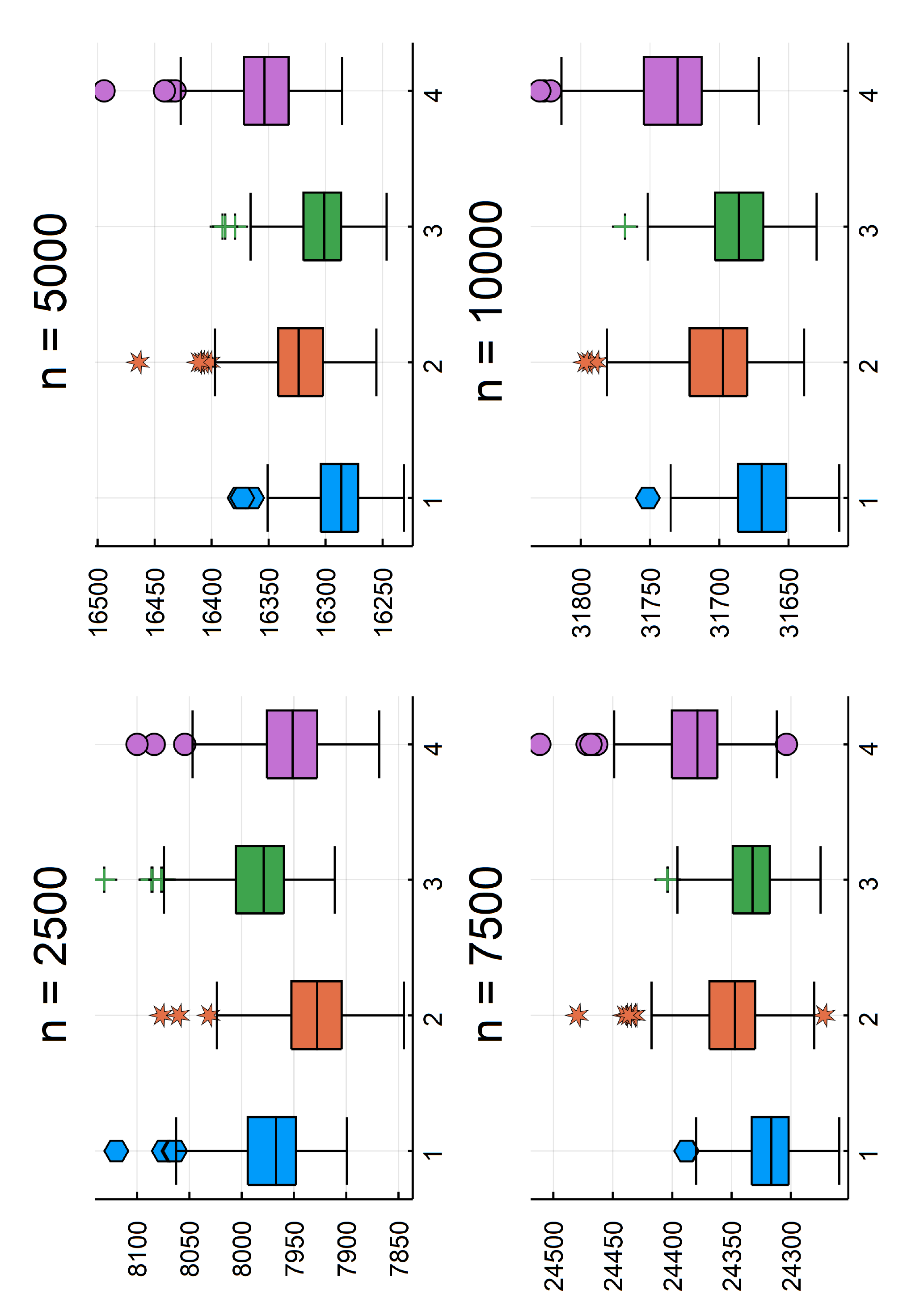}
\caption{Boxplots for Scenario 2 ($\mathcal{SV}$ model is true) from $R=200$ estimates 
 of  an IC and various observation sizes. Blue: $\mathrm{AIC}(\mathcal{SV})$, Orange: $\mathrm{AIC}(\mathcal{SVJ})$, Green:
$\mathrm{BIC}(\mathcal{SV})$, Purple: $\mathrm{BIC}(\mathcal{SVJ})$.}
\label{fig:box2}
\end{figure}

%
%
%
Table \ref{tb:study} shows results from the same $R=200$ replications for the estimation of AIC and BIC for each of the two models and two simulated datasets. 
In agreement with our theory, BIC appears more robust (than AIC) at choosing the correct model for the dataset simulated from  $\mathcal{SV}$.
Figure \ref{fig:best} plots differences in AIC and BIC in Scenario 2, 
sequentially in the data size -- more accurately, a proxy of the differences, see Remark 
\ref{rem:proxy}.
To be precise, the blue line denotes the `path' of $\mathrm{AIC}(\mathcal{SV})-\mathrm{AIC}(\mathcal{SVJ})$,
and the red line denotes the one of $\mathrm{BIC}(\mathcal{SV})-\mathrm{BIC}(\mathcal{SVJ})$.
Since model $\mathcal{SV}$ is true in this case, the
difference should be lower than zero for large enough $n$ if the used IC
were consistent. As one can see, the difference in BIC is
always negative after a large enough sample size $n$. In contrast,
and in agreement with our theory, the difference in AIC never has such property.
For instance, sometime after $n=10^4$, the difference increased and
exceeded the zero line. This is a clear empirical manifestation
of Proposition \ref{prop:clear}; so, whereas in the previous plots the deficiency of 
AIC was difficult to showcase when looking at \emph{fixed} data sizes, 
such deficiency became clear when we look at the evolution of AIC as a function
of data size.

\begin{table}[!h]
\centering%
\begin{tabular}{|l|r|r|r|r|}
\hline 
$n$ & $2,500$ & $5,000$ & $7,500$ & $10,000$\tabularnewline
\hline 
\hline 
$\mathrm{AIC}(\mathcal{SV})$ & $32/200$ & $4/200$ & $0/200$ & $0/200$\tabularnewline
\hline 
$\mathrm{AIC}(\mathcal{SVJ})$ & $168/200$ & $196/200$ & $200/200$ & $200/200$\tabularnewline
\hline 
$\mathrm{BIC}(\mathcal{SV})$ & $51/200$ & $5/200$ & $0/200$ & $0/200$\tabularnewline
\hline 
$\mathrm{BIC}(\mathcal{SVJ})$ & $149/200$ & $195/200$ & $200/200$ & $200/200$\tabularnewline
\hline 
\end{tabular}%
\vspace{0.2cm}

\begin{tabular}{|l|r|r|r|r|}
\hline 
$n$ & $2,500$ & $5,000$ & $7,500$ & $10,000$\tabularnewline
\hline 
\hline 
$\mathrm{AIC}(\mathcal{SV})$ & $154/200$ & $161/200$ & $153/200$ & $158/200$\tabularnewline
\hline 
$\mathrm{AIC}(\mathcal{SVJ})$ & $46/200$ & $39/200$ & $47/200$ & $42/200$\tabularnewline
\hline 
$\mathrm{BIC}(\mathcal{SV})$ & $179/200$ & $173/200$ & $184/200$ & $192/200$\tabularnewline
\hline 
$\mathrm{BIC}(\mathcal{SVJ})$ & $21/200$ & $27/200$ & $16/200$ & $8/200$\tabularnewline
\hline 
\end{tabular}
\caption{Results after $R=200$ replications of the approximation algorithm 
with $N=200$ particles. The top (resp.~bottom) table shows results for the data simulated from 
$\mathcal{SVJ}$ (resp.~$\mathcal{SV}$).
The 1st (resp.~2nd) row in each table shows the fraction of the replications  where the estimated AIC is smaller for the $\mathcal{SV}$ model (resp.~$\mathcal{SVJ}$ model) for different   data sizes; rows 3 and 4 show similar results for  BIC.  }
\label{tb:study}
\end{table}

%

\begin{figure}[!h]
\centering
\includegraphics[scale=0.4, angle=270]{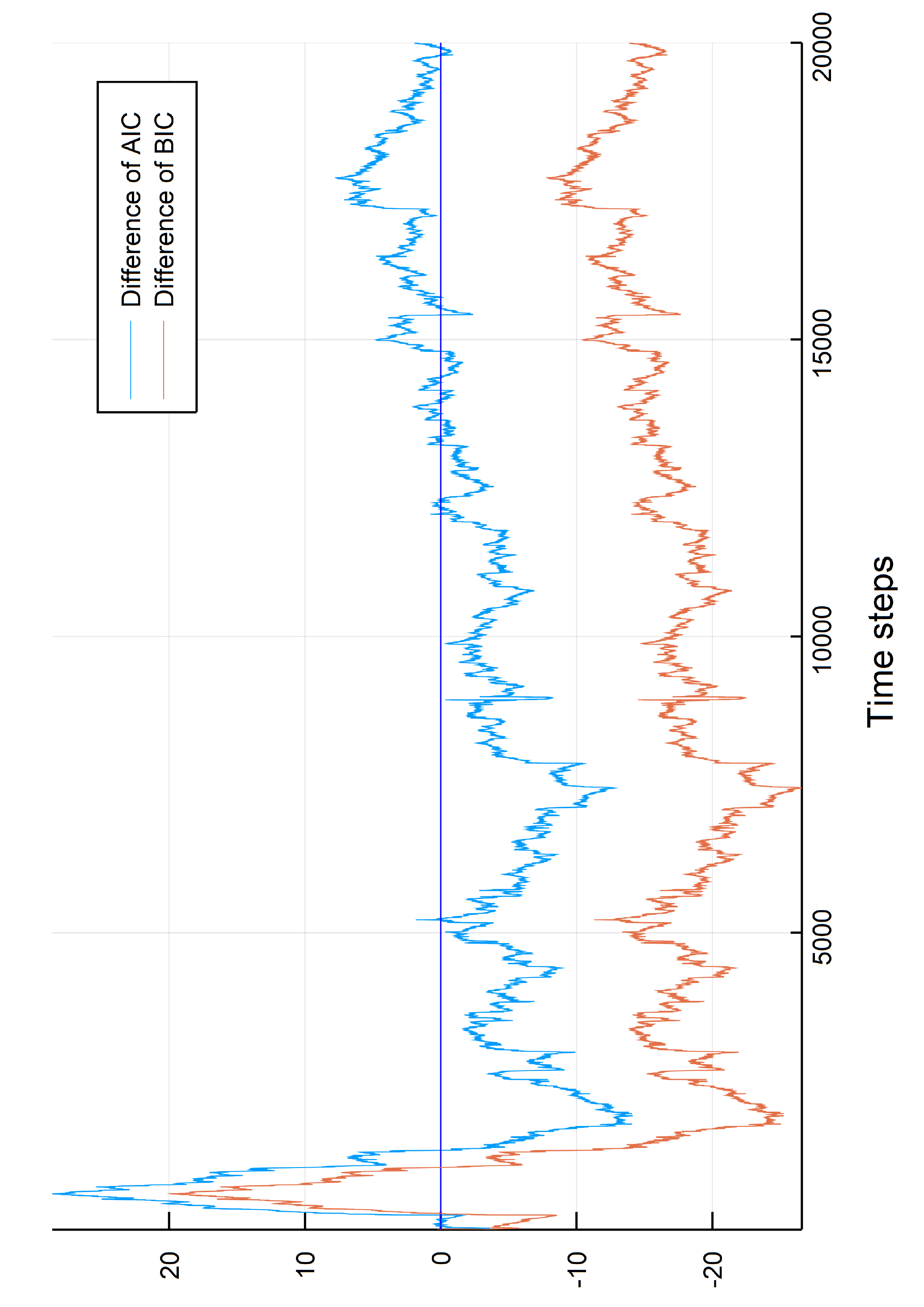}
\caption{The path of differences in AIC and BIC in Scenario 2 ($\mathcal{SV}$ is the true model). That is, the blue line show the approximated value of  $\mathrm{AIC}(\mathcal{SV})-\mathrm{AIC}(\mathcal{SVJ})$ as a function of data size,
and the red line the corresponding function for $\mathrm{BIC}(\mathcal{SV})-\mathrm{BIC}(\mathcal{SVJ})$.}
\label{fig:best}
\end{figure}

\section{Conclusions and Remarks}
\label{sec:final}

We have investigated the asymptotic behaviour of BIC, the evidence and AIC for nested HMMs,
and have derived new results concerning their consistency properties. Our work shows that BIC -- and the evidence -- are strongly consistent for a general class of HMMs. In contrast, for a similarly posed Model Selection problem,  AIC is not even weakly consistent. Our study focuses on asymptotics for increasing data size, 
so we do not investigate finite sample-size results for
BIC, evidence and AIC, such as optimality properties.
It is well-known that AIC is minimax-rate optimal  but BIC is not in many cases,
see e.g.~\cite{barr:99}. We conjecture this might also be the case for general~HMMs.

The technique of constructing stationary,
ergodic processes 
by introducing a backward infinite extension of the
observations -- see (\ref{eq:extend}) in Section   \ref{sec:asy} --
has been used in many other studies,  even beyond HMMs.
E.g.~\citet{douc2020posterior} use this approach to study
posterior consistency for a class of partially observed Markov models; \citet{lehericy2018nonasymptotic} use it
to investigate non-asymptotic behaviour of
MLE for (finite state space) HMMs; 
\cite{sylv:13} apply the technique within an online EM setting for HMMs;
\cite{diel:20} consider more general classes of latent variable models.

We note that asymptotic results about 
the MLE for HMMs  have recently been obtained under weaker conditions. 
See, e.g., \citet{douc2011consistency, douc2016maximizing}
for developments that go beyond compact spaces. 
Here, we have worked with strict assumptions on the state space (see -- indicatively -- Assumption \ref{ass:3}, Section \ref{sec:asy}), so that we obtain an important first set of illustrative results for Model Selection Criteria, avoiding at the same time an overload of technicalities. Future investigations are expected to further weaken the conditions we have used here. 

There are challenges when trying to move beyond the Model-Correctness setting.
As we have described in the first parts of the paper, \citet{douc2012asymptotic}
show that the MLE converges a.s.~even for misspecified models under
mild assumptions. However, a CLT for the MLE in the context of general
state-space misspecified HMMs has yet to been proven. To the best
of our knowledge, only \citet{pouzo2016maximum} obtain such a result
for a finite state space X. Thus, extending our results to non-nested
settings or/and ones where one does not assume correctness of a model,
is a non-trivial undertaking that requires extensive further research. Also, we note that AIC is asymptotically prediction efficient in some
misspecified models whilst BIC is not. The above discussion suggests that
investigating asymptotic behaviour of model selection criteria under
No-Model-Correctness for general HMM models is an important open problem that requires further research.



One can use alternative numerical algorithms instead of the one we have used here, 
and describe in Section \ref{sec:algorithm} -- see e.g.~the approach in \cite{olss:17}. 
Note that the numerical algorithm used in the paper is mostly a tool for illustrating our theoretical findings, which is the main focus of our work. The numerical study  shown in the paper already delivers the points stemming from the theory, so we have refrained from describing/implementing further methods to avoid diverting attention from our main findings.

From a practitioner point of view, our results and numerical study indicate that AIC can wrongly select the more complex model due to ineffective penalty term. Critically, this can be difficult 
to assess using standard experiments. Our study has shown that one needs to investigate the evolution of AIC against data size to clearly highlight its deficiency in the context of a numerical study. 
We stress here that in the numerical experiment we have knowingly used models  for which several of the stated Assumptions will not hold (maybe most notably, the strong mixing Assumption \ref{ass:3}). The aim is to illustrate at least numerically, that while our assumptions are standard in the literature, they serve for simplifying the path to otherwise too technical derivations and provide results that are expected to hold in much more general settings.

\section*{Acknowledgments}

SY was supported by the Alan Turing Institute for Data Sciences under
grant number TU/C/000013. AB was supported by the Leverhulme Trust Prize. We thank the three anonymous referees and the Associate Editor for comments, suggestions that have greatly improved the content of the paper.

\bibliographystyle{apalike}
\bibliography{references}

\end{document}